\newtheorem{theorem}{Theorem}
\newtheorem{lemma}[theorem]{Lemma}
\newtheorem{definition}[theorem]{Definition}
\newtheorem{corollary}[theorem]{Corollary}
\newtheorem{remark}[theorem]{Remark}
\newcommand{\R}{\mathbb{R}}
\newcommand{\C}{\mathbb{C}}
\newcommand{\M}{\mathbb{M}}
\renewcommand{\H}{\mathbb{H}}
\renewcommand{\phi}{\varphi}
\renewcommand{\epsilon}{\varepsilon}
\begin{document}

\title{Clifford Analysis with Indefinite Signature}

\author{Matvei Libine\footnote{Department of Mathematics, Indiana University,
Rawles Hall, 831 East 3rd St, Bloomington, IN 47405}
and Ely Sandine\footnote{Undergraduate student at
Cornell University, Ithaca, NY 14850}}

\maketitle

\begin{abstract}
We extend constructions of classical Clifford analysis to the case of
indefinite non-degenerate quadratic forms.
We define $(p,q)$-left- and right-monogenic functions by means of
Dirac operators that factor a certain wave operator.
We prove two different versions of Cauchy's integral formulas for these
functions.
The two formulas arise from dealing with singularities in distinct ways,
and are inspired by the methods of \cite{L, FL}.
These results indicate the merit of these methods for dealing with
singularities.
\end{abstract}

\section{Introduction}
Many results of complex analysis have analogues in quaternionic analysis.
In particular, there are analogues of complex holomorphic functions called
(left- and right-) regular functions. Cauchy's integral formula for complex
holomorphic functions $f(z)$
\begin{equation*}
f(z_0) = \frac 1{2\pi i} \oint \frac {f(z)}{z-z_0}\,dz
\end{equation*}
extends to the quaternionic setting, and its analogues for (left- and right-)
regular functions are usually referred to as Cauchy-Fueter formulas.
For modern introductions to quaternionic analysis see, for example,
\cite{Su, CSSS}.

Complex numbers $\C$ and quaternions $\H$ are special cases of
Clifford algebras.
(For an elementary introduction to Clifford algebras see, for example,
\cite{G}.)
There is a further extension of complex and quaternionic analysis called
Clifford analysis.
For Clifford algebras associated to positive definite quadratic forms on real
vector spaces, Clifford analysis is very similar to complex and quaternionic
analysis (see, for example, \cite{BDS, DSS, GM} and references therein).
Furthermore, J.~Ryan has initiated the study of Clifford analysis in the
setting of complex Clifford algebras \cite{R1,R2}.

If a Clifford algebra is associated to a quadratic form that is not positive
definite, the function that should serve as the reproducing kernel in the
Cauchy type formula has singularities that always intersect the contour of
integration, thus rendering a potential reproducing integral formula
meaningless.
For this reason, most developments of real Clifford analysis so far
have been in the setting of Clifford algebras associated to positive
definite quadratic forms.

We develop Clifford analysis in the setting of Clifford algebras associated
to indefinite non-degenerate quadratic forms by treating the relevant
Clifford algebras as real subalgebras of complex Clifford algebras
and, in a certain sense, restricting the complex Dirac operators and
functions to the appropriate real subspaces.
This is done in complete parallel with K.~Imaeda's approach in \cite{I},
where he first extends classical quaternionic analysis to the algebra of
biquaternions $\H_{\C} = \H \otimes_{\R} \C$ and then ``restricts'' to
the Minkowski space $\M$ by realizing $\M$ as a real form of $\H_{\C}$.

Similarly, the works \cite{L, FL} extend classical quaternionic
analysis to split quaternions $\H_{\R}$ (also known as coquaternions).
The algebra $\H_{\R}$ is isomorphic to the algebra of real $2 \times 2$ matrices
and is another example of a Clifford algebra, this time associated to an
indefinite non-degenerate quadratic form on $\R^2$.
The issue of singularities intersecting the contour of integration is
resolved in two different ways, leading to two types of integral formulas.
The first method uses biquaternions $\H_{\C}$, which
contain the split quaternions $\H_{\R}$ as a real subalgebra.
By considering the holomorphic extension of a regular function from $\H_{\R}$
into $\H_{\C}$ and deforming the contour of integration so it no longer
intersects the singularities, one obtains the first version of Cauchy's
formulas for regular functions in $\H_{\R}$.
The second method involves inserting a purely imaginary term
$i\epsilon\norm{X-X_0}^2$ into the denominator of the reproducing kernel
to prevent singularities, then showing that the limit as $\epsilon \to 0$
exists and produces another version of Cauchy's formulas for regular
functions in $\H_{\R}$.

Methods for dealing with non-positive definite signature setting developed in
\cite{L,FL} were used in \cite{RCW} and \cite{P}.
Thus \cite{RCW} extends the results of classical Clifford analysis to the
setting of what the authors call split-quaternionic Hermitian Clifford analysis.
And \cite{P} extends analysis over octonions to split octonions.
Both papers have to deal with exactly the same issue of the set of
singularities intersecting the contour of integration, and this issue is
resolved in the same way as in \cite{L, FL}.
In this paper we use methods of \cite{L, FL} to extend constructions
of classical Clifford analysis to the case of indefinite non-degenerate
quadratic forms.
In a larger context, our results suggest that the methods of handling
singularities developed in \cite{L, FL} are quite general.

We begin by recalling in Section \ref{2} the construction of
the universal Clifford algebras. These constructions are done in both the real
and complex cases, and we regard the real Clifford algebra as a subalgebra
of the complex Clifford algebra.
Following this, in Section \ref{3} we define Dirac operators and introduce
left- and right-monogenic functions as functions annihilated by Dirac operators.
Both real and complex cases are considered.
The complex case was mostly developed by J.~Ryan \cite{R1,R2},
and the real case is, in a certain sense, the restriction of the
complex operators and functions to the real subspace.
In Section \ref{4} we introduce Clifford-valued differential forms $D_nz$,
$D_{p,q}x$ and establish some of their properties.
These forms appear in the statements of Cauchy's formulas for monogenic
functions.
We also state the classical Cauchy's integral formulas for left- and
right-monogenic functions in the positive definite case
(Theorem \ref{standardintegralformula}).
In Section \ref{5} we state and prove the first version of Cauchy's integral
formulas for left- and right-monogenic functions
(Theorem \ref{firstintegralformula}).
This version requires having holomorphic extension of the monogenic functions
to the complex space and utilizes a deformation of the contour of integration
to avoid the set of singularities.
In Section \ref{6} we prove the second version of Cauchy's integral formulas
for left- and right-monogenic functions (Theorem \ref{secondintegralformula}).
This version involves inserting a purely imaginary term
$i\epsilon\norm{X-X_0}^2$ into the denominator of the reproducing kernel
to prevent singularities, then taking limit as $\epsilon \to 0^+$.
We first establish Theorem \ref{secondintegralformula} up to a constant
coefficient \eqref{2nd-formula-constant}. Then we pin down the value of the
coefficient using the first version of integral formulas
(Theorem \ref{firstintegralformula}), which is already established
(Lemma \ref{C-lemma}).

\thanks{
This research was made possible by the Indiana University, Bloomington,
Math REU (research experiences for undergraduates) program,
funded by NSF Award $\#1757857$.
We would like to thank Prof. Chris Connell for organizing and facilitating
this wonderful program.
We would also like to thank Ms. Mandie McCarthy for her administrative work,
the various professors who gave talks, and the other REU students for their
company.}

\section{Clifford Algebras}  \label{2}

In this section we recall basics of Clifford Algebras and establish notations.
For an elementary introduction to Clifford algebras see, for example, \cite{G}.

Let $V$ be an $n$-dimensional real vector space,
and $Q$ a quadratic form on $V$.
The form $Q$ uniquely extends to a bilinear form on $V$.
Diagonalizing $Q$, we have that there exist integers $p$, $q$ and orthogonal
basis $\{e_1,\dots, e_n\}$ of $(V,Q)$, such that
\begin{equation*}
Q(e_j)=
\begin{cases} 1 & 1 \leq j \leq p;\\
-1 & p+1 \leq j \leq p+q; \\
0 & p+q < j \leq n. \end{cases}
\end{equation*}
By Sylvester's Law of Inertia, we the numbers $p$ and $q$ are independent of
basis chosen. The ordered pair $(p,q)$ is the signature of $(V,Q)$.
From this point on we restrict our attention to non-degenerate quadratic forms
$Q$, in which case $p+q=n$.
If $Q$ is such a form with $q=0$ or $p=0$, then it is positive or negative
definite quadratic form respectively.

Recall the standard construction of the universal Clifford algebra associated
to $(V,Q)$ as a quotient of the tensor algebra.
We start with the tensor algebra over $V$,
\begin{equation*}
\bigotimes V=\R\oplus V\oplus (V\otimes V)\oplus (V \otimes V \otimes V)
\oplus \dots,
\end{equation*}
consider a set of elements of the tensor algebra
\begin{equation*}
S=\{v\otimes v+Q(v): v\in V\} \quad \subset \bigotimes V,
\end{equation*}
and let $(S)$ denote the ideal of $\bigotimes V$ generated by the elements
of $S$. Then the universal Clifford algebra associated to $(V,Q)$ can be
defined as a quotient
\begin{equation}  \label{quotientdef}
{\mathcal A}_{Q}=\bigotimes V/(S).
\end{equation}
We note the sign convention chosen for elements of the ideal is not standard
in the literature, resulting in a possible interchange of $p$ and $q$.
Let $e_0 \in {\mathcal A}_Q$ denote the multiplicative identity of the algebra.
If $v_1$ and $v_2$ are orthogonal elements of $(V,Q)$,
\begin{multline}  \label{anticommuteeq}
v_1 \otimes v_2 + v_2 \otimes v_1
= (v_1+v_2)\otimes(v_1+v_2) - v_1\otimes v_1 - v_2\otimes v_2  \\
= -Q(v_1+v_2)+Q(v_1)+Q(v_2)=0.
\end{multline}
Thus ${\mathcal A}_Q$ is a finite-dimensional algebra over $\R$ generated by
$e_1,\dots,e_n$, and let ${\mathcal A}_{p,q}$ denote the algebra
${\mathcal A}_Q$ constructed from $(V,Q)$, where $Q$ has signature $(p,q)$.
We fix an orthogonal basis
\begin{equation*}
  \{e_1,e_2,\dots,e_p,\tilde{e}_{p+1},\dots,\tilde{e}_{p+q}\}
\end{equation*}
of $V$ such that $Q(e_j)=1$ and $Q(\tilde{e}_j)=-1$ for all applicable $j$.
In the positive definite case the basis of $V$ is given by
$\{e_1,e_2,\dots,e_n\}$, and we let ${\mathcal A}_n={\mathcal A}_{n,0}$.
We have the following relations for $1\leq i,j\leq n$ with $i\neq j$,
\begin{align}
&e_0^2=e_0, \qquad e_i^2=-e_0, \qquad \tilde{e}_j^2=e_0, \qquad
e_0e_i=e_ie_0=e_i, \qquad  e_0\tilde{e_i}=\tilde{e_i}e_0=\tilde{e_i},
\label{basicrelations1} \\
&e_ie_j=-e_j e_i, \qquad e_i\tilde{e_j}=-\tilde{e_j} e_i, \qquad
\tilde{e_i}\tilde{e_j}=-\tilde{e_j}\tilde{e_i}.  \label{basicrelations2}
\end{align}
We consider subsets $B \subseteq \{1,\dots, n\}$.
If $B$ has $k>0$ elements,
\begin{equation*}
B=\{i_1,i_2,\dots,i_k\}\subseteq \{1,\dots, n\}
\end{equation*}
with $i_1<i_1< \dots <i_k$, define
\begin{equation} \label{basisdef}
e_B=e_{i_1i_2\dots i_k}=e_{i_1}\otimes e_{i_2} \otimes \dots \otimes \tilde{e}_{i_k}
\end{equation}
or, more precisely, $e_B$ is the image of this tensor product in
${\mathcal A}_Q$.
If $B$ is empty, we set $e_{\emptyset}$ be the identity element $e_0$.
These elements
\begin{equation*}
\bigl\{e_B: B \subseteq \{1,2,\dots, n\} \bigr\}
\end{equation*}
form a vector space basis of ${\mathcal A}_{p,q}$ over $\R$.
We will be especially concerned with the space $\R\oplus V$,
which we identify with the vector subspace $\R^{p+q+1}\subset {\mathcal A}_{p,q}$
spanned by
\begin{equation*}
  \{e_0,e_1,e_2,\dots,e_p,\tilde{e}_{p+1},\dots,\tilde{e}_{p+q}\}.
\end{equation*}
Any $X \in \R^{p+q+1}$ can be written as
\begin{equation*}
X=\sum_{j=0}^{p} x_j e_j+\sum_{j=p+1}^{p+q} \tilde{x}_j \tilde{e}_j.
\end{equation*}
In particular, in the positive definite case any
$X \in \R^{n+1}\subset {\mathcal A}_n$ can be expressed as
$X=\sum_{j=0}^{n} x_j e_j$.

We can also construct a complex universal Clifford algebra from a quadratic
space over $\C$.
Let $V^\C$ be an $n$-dimensional complex vector space with quadratic form $Q$.
Diagonalizing $Q$, we have that there exists an orthogonal basis
$\{e_1,\dots, e_n\}$ of $V^\C$ and integer $p$ such that $Q(e_j)=1$ if
$1\leq j\leq p$ and $Q(e_j)=0$ otherwise.
The form $Q$ is non-degenerate if and only if $p=n$, and we only consider
this case.
We perform a similar construction to \eqref{quotientdef}:
form the tensor algebra over $V^\C$
\begin{equation*}
\bigotimes V^\C = \C\oplus V^\C\oplus (V^\C\otimes V^\C) \oplus
(V^\C\otimes V^\C \otimes V^\C) \oplus \dots,
\end{equation*}
consider a set
\begin{equation*}
S^\C = \{v\otimes v+Q(v): v\in V^\C\} \quad \subset \bigotimes V^\C,
\end{equation*}
and let $(S^\C)$ denote the ideal generated by $S^\C$.
Then the complex universal Clifford algebra associated to $(V^\C,Q)$ is a
quotient
\begin{equation*}
{\mathcal A}_{Q}^\C=\bigotimes V^\C /(S^\C).
\end{equation*}
We also use notation ${\mathcal A}_n^\C$ for this algebra.
The analogues of \eqref{anticommuteeq} and
\eqref{basicrelations1}-\eqref{basicrelations2} remain valid for
${\mathcal A}_n^\C$, and the products $e_B$ defined as in \eqref{basisdef}
form a basis of ${\mathcal A}_n^\C$.

We can also consider ${\mathcal A}_{n}^\C$ as a real algebra generated by
\begin{equation*}
  e_0,e_1,e_2,\dots,e_n,ie_0,ie_1,ie_2,\dots,ie_n.
\end{equation*}
By the universality property of ${\mathcal A}_{p,q}$, the natural inclusion
$\iota: V \hookrightarrow V^\C$ defined on basis vectors by 
\begin{equation}  \label{gen-rel}
\iota(e_j)=e_j, \quad 1 \le j \le p, \qquad
\iota(\tilde{e}_j)=ie_j, \quad p+1 \leq j \leq p+q,
\end{equation}
extends to an injective $\R$-algebra homomorphism
$\iota: {\mathcal A}_{p,q} \hookrightarrow {\mathcal A}_{p+q}^\C$.
Thus, we can consider ${\mathcal A}_{p,q}$ as a unital real subalgebra of
${\mathcal A}_{p+q}^\C$.

We identify the $\C$-span of $e_0, e_1,\dots,e_n$ with
$\C^{n+1} \subset {\mathcal A}_n^\C$.
On $\C^{n+1}$ we have the {\em Clifford conjugation} defined by
\begin{equation*}
Z=z_0e_0+\sum_{j=1}^n z_je_j \quad \mapsto \quad Z^+=z_0e_0-\sum_{j=1}^n z_je_j
\end{equation*}
and the {\em complex conjugation} defined by
\begin{equation*}
Z = z_0e_0+\sum_{j=1}^n z_je_j \quad \mapsto \quad
\bar{Z}=\bar{z_0}e_0+\sum_{j=1}^n \bar{z_j}e_j.
\end{equation*}
(These conjugations can be extended to all of ${\mathcal A}_{p+q}^\C$,
but for the purposes of this paper we do not need this.)
The Clifford conjugation fixes the $\C$-span of $e_0$ identified with $\C$,
and the complex conjugation fixes the $\R$-span of
$e_{0},\dots,e_n$ identified with $\R^{n+1}$, and these conjugations
can be viewed as reflecting over the respective subspaces.

These two conjugations commute and lead to two useful quadratic forms on
$\C^{n+1} \subset {\mathcal A}_n^\C$. The first such form is
\begin{equation*}
N(Z)=ZZ^+=Z^+Z=\sum_{j=0}^n z_j^2
\end{equation*}
(note that it is complex valued). The corresponding bilinear form is
\begin{equation}  \label{bilinear-form}
\langle Z,W\rangle=\frac12 (Z^+W+ZW^+) = \sum_{j=0}^{n} z_jw_j.
\end{equation}
We denote by $\mathcal{N}_n^{\C}$ the null cone of $N(Z)$:
\begin{equation*}
\mathcal{N}_n^{\C}=\{ Z \in \C^{n+1} ;\: N(Z) = 0\},
\end{equation*}
then all $Z \in \C^{n+1} \setminus \mathcal{N}_n^{\C}$ are invertible with
inverse given by $Z^{-1} = N(Z)^{-1} Z^+$.
The other quadratic form is real valued:
\begin{equation*}
\norm{Z}^2 = \frac12 (Z\bar{Z}^++\bar{Z}Z^+) = \frac12 (\bar{Z}^+Z+Z^+\bar{Z})
= \sum_{j=0}^n \abs{z_j}^2.
\end{equation*}

As was mentioned earlier, we consider ${\mathcal A}_{p,q}$ as a real subalgebra
of ${\mathcal A}_{p+q}^\C$.
We describe the restrictions of these two conjugations and two quadratic forms
to $\R^{p+q+1} \subset {\mathcal A}_{p,q} \subset {\mathcal A}_{p+q}^\C$:
\begin{equation*}
X=x_0e_0+\sum_{j=1}^p x_j e_j+\sum_{j=p+1}^{p+q} \tilde{x}_j \tilde{e}_j
\quad \mapsto \quad
X^+=x_0e_0-\sum_{j=1}^p x_j e_j-\sum_{j=p+1}^{p+q} \tilde{x}_j \tilde{e}_j,
\end{equation*}
\begin{equation}  \label{compl-conj-Rn}
X=x_0e_0+\sum_{j=1}^p x_j e_j+\sum_{j=p+1}^{p+q} \tilde{x}_j \tilde{e}_j
\quad \mapsto \quad
\bar{X}=x_0e_0+\sum_{j=1}^p x_j e_j-\sum_{j=p+1}^{p+q} \tilde{x}_j \tilde{e}_j,
\end{equation}
\begin{equation*}
N(X)=XX^+=X^+X = \sum_{j=0}^p x_j^2-\sum_{j=p+1}^{p+q}\tilde{x}_j^2,
\end{equation*}
\begin{equation}  \label{Eucl-norm}
\norm{X}^2 = \frac12 (X\bar{X}^++\bar{X}X^+) = \frac12 (\bar{X}^+X+X^+\bar{X})
=\sum_{j=0}^{p} x_j^2+\sum_{j=p+1}^{q} \tilde{x}_j^2.
\end{equation}
The bilinear form on $\R^{p+q+1}$ corresponding to $N(X)$ is
\begin{equation*}
\langle X,Y \rangle =\frac{1}{2}(X^+Y+XY^+)
=\sum_{j=0}^p x_jy_j-\sum_{j=p+1}^{p+q} \tilde{x}_j\tilde{y}_j.
\end{equation*}
In line with the complex case, we consider the null cone
\begin{equation*}
\mathcal{N}_{p,q}= \{X \in \R^{p+q+1};\: N(X)=0 \},
\end{equation*}
then all $X \in \R^{p+q+1} \setminus \mathcal{N}_{p,q}$
are invertible with inverse given by $X^{-1} = N(X)^{-1}X^+$.
Finally, we note that in the positive definite case $\mathcal{N}_{n,0}=\{0\}$.

\section{Dirac Operators, Monogenic Functions and Green's Functions}  \label{3}

In this section we recall the definitions of Dirac operators,
monogenic functions and Green's functions in the setting of complex
Clifford algebras that were originally introduced in \cite{R1}.
Then, using the inclusion
$\R^{p+q+1} \subset {\mathcal A}_{p,q} \subset {\mathcal A}_{p+q}^\C$,
we restrict these notions to define their analogues in the setting of real
Clifford algebras ${\mathcal A}_{p,q}$.
When we apply these restricted Dirac operators to the restricted
Green's functions, we obtain special monogenic functions that will
serve as reproducing kernels of Cauchy's integral formulas in the
${\mathcal A}_{p,q}$ setting.

We introduce linear differential operators on $\C^{n+1}$
\begin{equation*}
\nabla^+_{\C} =e_0\pdv{z_0} + \sum_{j=1}^n e_j\pdv{z_j}
\qquad \text{and} \qquad
\nabla_{\C} = e_0\pdv{z_0} - \sum_{j=1}^n e_j\pdv{z_j}
\end{equation*}
which may be applied to functions on the left and on the right. 
Let $\Box_\C$ be the complex Laplacian
\begin{equation*}
\Box_\C = \sum_{j=0}^n \pdv[2]{z_j}.
\end{equation*}
Then
\begin{equation}  \label{Laplace-factorization}
\nabla_\C \nabla^+_\C f=\nabla^+_\C \nabla_\C  f=\Box_\C f
\qquad \text{and} \qquad
g\nabla_\C\nabla^+_\C=g\nabla^+_\C \nabla_\C =\Box_\C g,
\end{equation}
where $f$ is a holomorphic function on $\C^{n+1}$ with values in
${\mathcal A}_n^\C$ or a left ${\mathcal A}_n^\C$-module, and
$g$ is a holomorphic function on $\C^{n+1}$ with values in
${\mathcal A}_n^\C$ or a right ${\mathcal A}_n^\C$-module.

\begin{definition}
Let $U \subseteq \C^{n+1}$ be an open set, and $M_n^\C$ a left
${\mathcal A}_n^\C$-module. A holomorphic function $f: U \to M_n^\C$
is called {\em complex left-monogenic} if
\begin{equation*}
\nabla^+_{\C} f=e_0\pdv{f}{z_0}+\sum_{j=1}^n e_j\pdv{f}{z_j} =0
\end{equation*}
at all points in $U$.

Similarly, let $\tilde{M}_n^\C$ be a right ${\mathcal A}_n^\C$-module,
then a holomorphic function $g: U\to \tilde{M}_n^\C$
is called {\em complex right-monogenic} if
\begin{equation*}
g\nabla^+_\C=\pdv{g}{z_0}e_0+\sum_{j=1}^n \pdv{g}{z_j}e_j =0
\end{equation*}
at all points in $U$.
\end{definition}

The factorization \eqref{Laplace-factorization} leads to a natural method of
constructing complex left- and right-monogenic functions.
If $\phi: U \to M_n^\C$ is complex harmonic (i.e., $\Box_\C \phi =0$),
with $U$ and $M_n^\C$ as in the definition, then $\nabla_\C f$ is
complex left-monogenic.
Similarly, if $\tilde{\phi}: U\to \tilde{M}_n^\C$ is complex harmonic,
$\tilde{\phi} \nabla_\C$ is complex right-monogenic.

We restrict these definitions to the subalgebra
${\mathcal A}_{p,q} \subset {\mathcal A}_{p+q}^\C$.
Thus we introduce linear differential operators
\begin{align*}
\nabla^+_{p,q} &= e_0\pdv{x_0} + \sum_{j=1}^p e_j\pdv{x_j}
- \sum_{j=p+1}^{p+q}\tilde{e}_j \pdv{\tilde{x}_j}
\qquad \text{and} \\
\nabla_{p,q} &= e_0\pdv{x_0} - \sum_{j=1}^p e_j\pdv{x_j}
+ \sum_{j=p+1}^{p+q} \tilde{e}_j\pdv{\tilde{x}_j} 
\end{align*}
which may be applied to functions on the left and on the right.
Let $\Box_{p,q}$ be a wave operator on $\R^{p+q+1}$,
\begin{equation*}
\Box_{p,q} = \sum_{j=0}^p \pdv[2]{x_j}-\sum_{j=p+1}^{p+q}\pdv[2]{\tilde{x}_j}.
\end{equation*}
Then
\begin{equation}  \label{wave-factorization}
\nabla_{p,q} \nabla^+_{p,q} f=\nabla^+_{p,q} \nabla_{p,q}  f=\Box_{p,q} f
\qquad \text{and} \qquad
g\nabla_{p,q} \nabla^+_{p,q}=g\nabla^+_{p,q} \nabla_{p,q}=\Box_{p,q} g,
\end{equation}
where $f$ is a ${\mathcal C}^2$ function on $\R^{p+q+1}$ with values in
${\mathcal A}_{p,q}$ or a left ${\mathcal A}_{p,q}$-module, and
$g$ is a ${\mathcal C}^2$ function on $\R^{p+q+1}$ with values in
${\mathcal A}_{p,q}$ or a right ${\mathcal A}_{p,q}$-module.

\begin{definition}
Let $U \subset \R^{p+q+1}$ be an open set, and $M_{p,q}$ a left
${\mathcal A}_{p,q}$-module.
A ${\mathcal C}^1$ function $f: U \to M_{p,q}$ is called
{\em $(p,q)$-left-monogenic} if
\begin{equation*}
\nabla^+_{p,q} f = e_0\pdv{f}{x_0}+\sum_{j=1}^p e_j\pdv{f}{x_j}
- \sum_{j=p+1}^{p+q}\tilde{e}_j \pdv{f}{\tilde{x}_j} =0
\end{equation*}
at all points in $U$.

Similarly, let $\tilde{M}_{p,q}$ be a right ${\mathcal A}_{p,q}$-module, then
a ${\mathcal C}^1$ function $g: U \to \tilde{M}_{p,q}$ is called
{\em $(p,q)$-right-monogenic} if
\begin{equation*}
g \nabla^+_{p,q} = \pdv{g}{x_0}e_0 + \sum_{j=1}^p \pdv{g}{x_j}e_j
- \sum_{j=p+1}^{p+q}\pdv{g}{\tilde{x}_j}\tilde{e}_j =0
\end{equation*}
at all points in $U$.
\end{definition}

The factorization \eqref{wave-factorization} leads to a natural method of
constructing $(p,q)$-left and right-monogenic functions.
If $\phi: U \to M_{p,q}$ satisfies $\Box_{p,q} \phi =0$,
with $U$ and $M_{p,q}$ as in the definition, then $\nabla_{p,q} f$ is
$(p,q)$-left-monogenic.
Similarly, if $\tilde{\phi}: U \to \tilde{M}_{p,q}$ satisfies
$\Box_{p,q} \tilde{\phi} =0$, 
$\tilde{\phi} \nabla_{p,q}$ is $(p,q)$-right-monogenic.

The following case is an important example of such a construction,
in which we consider a Green's function on
$\C^{p+q+1} \subset {\mathcal A}_{p+q}^C$
and a suitable restriction to $\R^{p+q+1} \subset {\mathcal A}_{p,q}$.

\begin{definition}  \label{defthatprecedessquareroot}
For odd $n \geq 2$, we define the following function
$H_n(Z)$: $\C^{n+1}\setminus \mathcal{N}_n^\C \to \C$ by
\begin{equation*}
H_n(Z) = \frac1{N(Z)^{\frac{n-1}2}} = \frac{1}{(\sum_{j=0}^n z_j^2)^{\frac{n-1}2}}.
\end{equation*}
\end{definition}

We note that if $n$ is even, this may not be well defined.
In this case, we introduce a domain
\begin{equation*}
\C_G^{n+1}=\C^{n+1}\setminus \{ Z \in \C^{n+1} ;\: N(Z) \in \R, N(Z)\leq 0\}
\end{equation*}
and choose a branch of $N(Z)^{\frac{1}{2}}: \C_G^{n+1} \to \C$ with values
in the right half-plane $\{ w \in \C ;\: \Re w> 0 \}$.
Then $H_{n}(Z) = \bigl( N(Z)^{\frac{1}{2}} \bigr)^{1-n}$ is a well defined
function $\C_{G}^{n+1}\to \C$.
From now on we restrict the domain of $H_n(Z)$ to $\C_G^{n+1}$
regardless of the parity of $n$, as the same proofs hold in either case.
Observe that on this domain $H_n(Z)$ is complex harmonic,
and, since $H_n(Z)$ is scalar-valued, $\nabla_\C H_n(Z) = H_n(Z)\nabla_\C$
is complex left- and right-monogenic.
Thus, we obtain a function on $\C_G^{n+1}$ with values in
${\mathcal A}_n^\C$ that is complex left- and right-monogenic:

\begin{lemma}
An ${\mathcal A}_n^\C$-valued function $G_n(Z)$ defined on $\C_G^{n+1}$ as
\begin{equation*}
G_n(Z)= \frac1{1-n} \nabla_\C H_n(Z)
= \frac{z_0-\sum_{j=1}^n z_j e_j}{(\sum_{j=0}^n z_j^2)^{\frac{n+1}2}}
= \frac{Z^+}{N(Z)^{\frac{n+1}2}}
\end{equation*}
is complex left- and right-monogenic.
\end{lemma}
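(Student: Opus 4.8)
The plan is to verify the two displayed closed forms for $G_n(Z)$ by a direct differentiation and then read off left- and right-monogenicity from the factorization \eqref{Laplace-factorization}. Essentially everything needed has already been set up in the paragraph preceding the statement, so the proof is mostly bookkeeping; the one computation worth doing carefully is the one that pins down the exponent.

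First I would record that $H_n$ is complex harmonic on $\C_G^{n+1}$, which also explains the choice of exponent in Definition \ref{defthatprecedessquareroot}. Writing $N = N(Z) = \sum_{j=0}^n z_j^2$ and differentiating $N^{-m}$, one finds $\partial_{z_j} N^{-m} = -2m z_j N^{-m-1}$ and $\partial_{z_j}^2 N^{-m} = -2m N^{-m-1} + 4m(m+1) z_j^2 N^{-m-2}$. Summing over the $n+1$ indices $j = 0,1,\dots,n$ and using $\sum_j z_j^2 = N$ gives $\Box_\C N^{-m} = \bigl(-2m(n+1) + 4m(m+1)\bigr) N^{-m-1}$, which vanishes exactly when $m = \frac{n-1}{2}$; hence $\Box_\C H_n = 0$ on $\C_G^{n+1}$.

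Next I would compute $\nabla_\C H_n$ directly. With $m = \frac{n-1}{2}$ the formula above gives $\partial_{z_j} H_n = -(n-1) z_j N^{-(n+1)/2}$, and since $H_n$ is scalar-valued the coefficients $e_j$ commute past it, so $\nabla_\C H_n = -(n-1) N^{-(n+1)/2}\bigl(z_0 e_0 - \sum_{j=1}^n z_j e_j\bigr) = -(n-1)N(Z)^{-(n+1)/2}\,Z^+$. Dividing by $1-n$ yields the two stated closed forms $G_n(Z) = \bigl(z_0 - \sum_{j=1}^n z_j e_j\bigr)N(Z)^{-(n+1)/2} = Z^+ N(Z)^{-(n+1)/2}$; in particular $G_n$ is holomorphic and ${\mathcal A}_n^\C$-valued on $\C_G^{n+1}$.

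Finally, monogenicity follows from \eqref{Laplace-factorization}. Applying $\nabla_\C^+$ on the left, $\nabla_\C^+ G_n = \frac{1}{1-n}\nabla_\C^+\nabla_\C H_n = \frac{1}{1-n}\Box_\C H_n = 0$, so $G_n$ is complex left-monogenic. Because $H_n$ is scalar-valued, $e_j \partial_{z_j} H_n = (\partial_{z_j} H_n) e_j$, so $\nabla_\C H_n = H_n \nabla_\C$ as ${\mathcal A}_n^\C$-valued functions; applying $\nabla_\C^+$ on the right and using the right-hand identity in \eqref{Laplace-factorization} gives $G_n \nabla_\C^+ = \frac{1}{1-n}(H_n\nabla_\C)\nabla_\C^+ = \frac{1}{1-n}\Box_\C H_n = 0$, so $G_n$ is also complex right-monogenic. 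There is no real obstacle here: the only points needing any care are keeping the exponent arithmetic straight in the harmonicity computation (this is what forces the power $\frac{n-1}{2}$ rather than any other), and noting that the left- and right-monogenicity statements, though formally distinct, both reduce to $\Box_\C H_n = 0$ precisely because $H_n$ is scalar-valued, which lets $\nabla_\C$ be moved freely from one side to the other.
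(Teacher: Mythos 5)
Your proof is correct and follows the same route the paper takes. The paper gives no separate proof of this lemma — it is presented as an immediate consequence of the two observations made in the surrounding text: $H_n$ is complex harmonic on $\C_G^{n+1}$, and since $H_n$ is scalar-valued $\nabla_\C H_n = H_n\nabla_\C$, so the factorization \eqref{Laplace-factorization} yields both left- and right-monogenicity at once. You simply spell out the differentiation that pins down the exponent $m=\frac{n-1}{2}$ and verifies the closed form $Z^+ N(Z)^{-(n+1)/2}$; that computation is correct and is exactly the bookkeeping the paper leaves to the reader.
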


Restricting this function to $\R^{p+q+1} \subset {\mathcal A}_{p,q}$
as was done with Dirac operators earlier, we obtain the following solution to
the wave equation $\Box_{p,q} \phi =0$ and the corresponding $(p,q)$-left-
and $(p,q)$-right-monogenic functions.
Let
\begin{equation*}
\R_G^{p+q+1} =\R^{p+q+1} \setminus \{ X \in \R^{n+1} ;\: N(X)\leq 0\}
= \R^{p+q+1} \cap \C_G^{p+q+1}.
\end{equation*}

\begin{definition}
For all $p+q \geq 2$, define the functions $H_{p,q}(X): \R_G^{p+q+1} \to \R$
and $G_{p,q}(X): \R_G^{p+q+1} \to {\mathcal A}_{p,q}$ by
\begin{equation*}
H_{p,q}(X) = H_{p+q}(X) \eval_{\R_G^{p+q+1}} = \frac1{N(X)^{\frac{p+q-1}2}}
= \frac1{\bigl( \sum_{j=0}^p x_j^2-\sum_{j=p}^{p+q} \tilde{x}_j^2
  \bigr)^{\frac{p+q-1}2}},
\end{equation*}
\begin{multline*}
  G_{p,q}(X) = G_{p+q}(X) \eval_{\R_G^{p+q+1}}
= \frac1{1-p-q} \nabla_{p,q} H_{p,q}(X) \\
= \frac{x_0-\sum_{j=1}^p x_j e_j - \sum_{j=p+1}^{p+q} \tilde{x}_j \tilde{e}_j}
{\bigl( \sum_{j=0}^p x_j^2-\sum_{j=p+1}^{p+q} \tilde{x}_j^2 \bigr)^{\frac{p+q+1}2}}
= \frac{X^+}{N(X)^{\frac{p+q+1}2}}.
\end{multline*}
\end{definition}

Consequently, $\Box_{p,q} H_{p,q}(X)= 0$ and $G_{p,q}$ is
$(p,q)$-left- and right-monogenic.


\section{Differential Forms}  \label{4}

In this section we introduce the Clifford-valued differential $n$-forms
$D_nz$ and $D_{p,q}x$ that will appear in the statement of Cauchy's
integral formulas.
We also prove several properties of these forms.
At the end of the section we state the classical Cauchy's integral formulas
for monogenic functions.
The complex case originally was developed in \cite{R1}.

\begin{definition}  \label{dvcomplexdef}
Let $dV_{\C}$ be the $n+1$ complex holomorphic form on $\C^{n+1}$:
\begin{equation}
dV_{\C} = dz_0 \wedge dz_1\wedge \dots \wedge dz_n;
\end{equation}
it is normalized so that $dV_\C(e_0, e_1, e_2,\dots, e_n)=1$.
\end{definition}

Recall the bilinear form \eqref{bilinear-form} on $\C^{n+1}$.

\begin{definition}
Let $D_nz$ be the unique $\C^{n+1}$-valued complex holomorphic $n$-form
on $\C^{n+1}$ such that, for all $Z_0,Z_1,\dots,Z_n \in \C^{n+1}$, we have
\begin{equation}  \label{innerproductform}
\bigl\langle Z_0,D_nz(Z_1,Z_2,\dots,Z_n) \bigr\rangle
= dV_\C(Z_0, Z_1, Z_2,\dots Z_n).
\end{equation}
\end{definition}

Explicitly, we can express $D_n{z}$ as a sum of $n+1$ terms by substituting
basis vectors into \eqref{innerproductform} yielding
\begin{equation}  \label{Dzsumdef}
D_nz=\sum_{j=0}^n (-1)^{j}e_j d\hat{z}_{j},
\end{equation}
where
\begin{equation*}
d\hat{z}_{j} =
dz_0\wedge dz_1\wedge \dots \wedge dz_{j-1} \wedge dz_{j+1} \wedge \dots dz_n.
\end{equation*}

\begin{lemma}  \label{productrulez}
Let $U \subset \C^{n+1}$ be an open set and let $f: U \to {\mathcal A}_n^\C$, 
$g: U \to \tilde{M}_n^\C$ be holomorphic functions, where $\tilde{M}_n^\C$ is a
right ${\mathcal A}_n^\C$-module. Then,
\begin{equation}  \label{productrulez1}
d(gD_nzf)=(g\nabla^+_\C)fdV_\C+g(\nabla^+_\C f) dV_\C
\qquad \text{and} \qquad
d(gD_nz)=(g\nabla^+_\C)dV_\C.
\end{equation}
Similarly, if $f: U \to M_n^\C$ and $g: U \to {\mathcal A}_n^\C$
are holomorphic functions, where $M_n^\C$ is a left ${\mathcal A}_n^\C$-module,
\begin{equation*}
d(gD_nzf)=(g\nabla^+_\C)fdV_\C+g(\nabla^+_\C f) dV_\C
\qquad \text{and} \qquad
d(D_nzf)= (\nabla^+_\C f) dV_\C.
\end{equation*}
\end{lemma}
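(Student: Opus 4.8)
The plan is to compute the exterior derivative $d(gD_nzf)$ directly from the explicit expression \eqref{Dzsumdef} for $D_nz$ and the Leibniz rule for $d$ on products of differential forms, treating $g$ and $f$ as $0$-forms. Writing $D_nz = \sum_{j=0}^n (-1)^j e_j\, d\hat z_j$, we have $gD_nzf = \sum_{j=0}^n (-1)^j g e_j f\, d\hat z_j$, and since each $d\hat z_j$ is a holomorphic $n$-form (closed), $d(gD_nzf) = \sum_{j=0}^n (-1)^j d(ge_jf)\wedge d\hat z_j$. Because $g$ and $f$ are holomorphic, $d(ge_jf) = \sum_{k=0}^n \pdv{(ge_jf)}{z_k}\, dz_k$, and only the $k=j$ term survives the wedge with $d\hat z_j$. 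A sign bookkeeping step shows $dz_j \wedge d\hat z_j = (-1)^j dV_\C$, so the $(-1)^j$ factors cancel and $d(gD_nzf) = \sum_{j=0}^n \pdv{(ge_jf)}{z_j}\, dV_\C$.

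The next step is to expand $\pdv{(ge_jf)}{z_j}$ by the ordinary product rule. Since $e_j$ is a constant element of the algebra, $\pdv{(ge_jf)}{z_j} = \pdv{g}{z_j}e_jf + ge_j\pdv{f}{z_j}$. Summing over $j$ from $0$ to $n$ and recognizing the two resulting sums as $(g\nabla^+_\C)f$ and $g(\nabla^+_\C f)$ respectively — here one must be slightly careful that $e_j$ is inserted between the differentiated factor and the other factor, which is exactly the way the Dirac operators $\nabla^+_\C$ act on the right (for $g$) and on the left (for $f$) — yields the first identity in \eqref{productrulez1}. The second identity, $d(gD_nz) = (g\nabla^+_\C)\, dV_\C$, is then the special case $f \equiv e_0$ (the identity of the algebra), and the two identities in the "similarly" clause are proved by the identical computation, with the module structures on the opposite sides; when $f\equiv e_0$ in that second pair one recovers $d(D_nzf) = (\nabla^+_\C f)\, dV_\C$. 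Note that in every case the algebra multiplications are well defined because the $e_j$ and the $\C^{n+1}$-valued coefficients multiply within ${\mathcal A}_n^\C$, and $g$ (resp. $f$) takes values in a compatible one-sided module.

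The only genuinely delicate point is the sign $dz_j\wedge d\hat z_j = (-1)^j\, dV_\C$: moving $dz_j$ past the $j$ factors $dz_0,\dots,dz_{j-1}$ to its home slot produces $j$ transpositions, hence the factor $(-1)^j$, which then cancels the $(-1)^j$ in \eqref{Dzsumdef} and leaves clean $+dV_\C$ coefficients. I expect this — together with keeping the noncommutative factors $g$, $e_j$, $f$ in the correct left-to-right order throughout, so that the collapsed sums really are $(g\nabla^+_\C)f$ and $g(\nabla^+_\C f)$ rather than some reordering — to be the main (and only) obstacle; everything else is the Leibniz rule and the vanishing of $d(d\hat z_j)$. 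No integration, no Stokes' theorem, and no analytic estimates are needed: the lemma is a pointwise algebraic identity between differential forms on $U$.
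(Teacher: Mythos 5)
Your proof is correct, and since the paper omits the proof of this lemma (it cites \cite{R1} for the complex case and later says the real analogue, Lemma \ref{productrulex}, ``is the same as in the complex case''), there is no in-paper argument to compare against. Your computation is the standard one: expand $D_nz$ via \eqref{Dzsumdef}, use that each $d\hat z_j$ is closed, observe that only the $\partial/\partial z_j$ term survives the wedge with $d\hat z_j$, cancel the signs via $dz_j\wedge d\hat z_j = (-1)^j dV_\C$, and finish with the Leibniz rule keeping $g$, $e_j$, $f$ in order. All the side remarks — the module compatibility in each of the four cases, the recovery of the one-factor identities by setting $f\equiv e_0$ or $g\equiv e_0$, and the fact that holomorphy lets you drop the $d\bar z_k$ terms — are exactly the right bookkeeping. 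This is a complete proof of the lemma.
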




\begin{corollary}  \label{stokesapp}
A holomorphic function $f$ on $U$ is complex left-monogenic if and only if
the $n$-form $D_nzf$ is closed on $U$.
Similarly, a holomorphic function $g$ on $U$ is complex right-monogenic
if and only if the $n$-form $gD_nz$ is closed on $U$.
\end{corollary}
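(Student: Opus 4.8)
The plan is to apply Lemma \ref{productrulez} directly and read off the equivalence from the formula $d(D_nz f) = (\nabla^+_\C f)\, dV_\C$ (and $d(g D_nz) = (g\nabla^+_\C)\, dV_\C$). First I would recall that $dV_\C = dz_0\wedge dz_1\wedge\cdots\wedge dz_n$ is a nowhere-vanishing top-degree holomorphic form on $\C^{n+1}$, so multiplication by $dV_\C$ is injective on ${\mathcal A}_n^\C$-valued functions: the $(n+1)$-form $(\nabla^+_\C f)\, dV_\C$ vanishes identically on an open set $U$ if and only if the coefficient $\nabla^+_\C f$ vanishes identically on $U$.

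For the left-monogenic statement, I would argue as follows. Specialize the second identity in \eqref{productrulez1} (the version with $M_n^\C$ a left module), namely $d(D_nz f) = (\nabla^+_\C f)\, dV_\C$. If $f$ is complex left-monogenic, then $\nabla^+_\C f = 0$ on $U$, hence $d(D_nz f) = 0$, so $D_nz f$ is closed. Conversely, if $D_nz f$ is closed on $U$, then $(\nabla^+_\C f)\, dV_\C = 0$ on $U$; since $dV_\C$ is a nonvanishing top form, comparing coefficients gives $\nabla^+_\C f = 0$ on $U$, i.e.\ $f$ is complex left-monogenic. The right-monogenic statement is entirely parallel, using instead $d(g D_nz) = (g\nabla^+_\C)\, dV_\C$ from the first line of \eqref{productrulez1}: closedness of $g D_nz$ is equivalent to $(g\nabla^+_\C)\, dV_\C = 0$, which is equivalent to $g\nabla^+_\C = 0$.

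There is essentially no obstacle here; the only point needing a word of care is the injectivity of $(\,\cdot\,) \mapsto (\,\cdot\,)\, dV_\C$, which I would justify simply by noting that $dV_\C$ is the (nonzero) generator of the space of holomorphic $(n+1)$-forms, so a Clifford-valued $(n+1)$-form $A\, dV_\C$ is zero precisely when each real component of $A$ (in the basis $\{e_B\}$) is zero, i.e.\ when $A = 0$. Thus the corollary is an immediate consequence of Lemma \ref{productrulez} together with this observation, and the proof is just a couple of lines.
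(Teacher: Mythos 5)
Your proof is correct and is exactly the argument the paper intends (the paper states the corollary without an explicit proof, but it follows immediately from Lemma \ref{productrulez} in just the way you describe). The observation that $dV_\C$ is a nowhere-vanishing top form, so that $A\,dV_\C = 0$ forces $A=0$, is the only point that needs saying, and you say it.
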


This corollary may be used to give an alternative definition of complex
left- and right-monogenic functions.

Recall that the generators of the real universal Clifford algebra
${\mathcal A}_{p,q}$ and complex universal Clifford algebra
${\mathcal A}_{p+q}^{\C}$ are related by \eqref{gen-rel}.
Therefore, restricting to $\R^{p+q+1}$, we have
\begin{align*}
& dz_j(x_j e_j)=x_j=dx_j(x_je_j),
\quad \text{hence} \quad
dz_j \eval_{\R^{p+q+1}}=dx_j \qquad\text{if } 0 \leq j \leq p;  \\
& dz_j(\tilde{x}_j\tilde{e}_j) = i\tilde{x}_j
= id\tilde{x}_j(\tilde{x}_j\tilde{e}_j),
\quad \text{hence} \quad
dz_j \eval_{\R^{p+q+1}} = id\tilde{x}_j  \qquad\text{if } p+1 \leq j \leq p+q.
\end{align*}

\begin{definition}
We define $dV_{p,q}$ to be the restriction of $dV_{\C}$ to $\R^{p+q+1}$:
\begin{equation}  \label{factorsofi}
dV_{p,q} = dV_{\C} \eval_{\R^{p+q+1}}  =
i^q dx_0 \wedge \dots \wedge dx_p \wedge d\tilde{x}_{p+1} \wedge \dots
\wedge d\tilde{x}_{p+q};
\end{equation}
it is normalized so that
$dV_{p,q}(e_0,e_1,\dots,e_{p},\tilde{e}_{p+1},\dots,\tilde{e}_{p+q})=i^{q}$.
\end{definition}

\begin{definition}  \label{Dxinnerproductdef}
Let $D_{p,q}x$ be the restriction of $D_{p+q}z$ to $\R^{p+q+1}$,
so that we have, for all $(X_0,X_1,\dots,X_{p+q}) \in \R^{p+q+1}$,
\begin{equation*}
\bigl\langle X_0, D_{p,q}x(X_1,X_2,\dots,X_{p+q}) \bigr\rangle
= dV_{p,q}(X_0, X_1, \dots X_{p+q}).
\end{equation*}
\end{definition}

Explicitly,
\begin{equation}  \label{Dx-explicit}
D_{p,q}x= i^q \biggl( \sum_{j=0}^{p} (-1)^{j}e_j d\hat{x}_{j}
- \sum_{j=p+1}^{p+q} (-1)^{j}\tilde{e}_j d\hat{\tilde{x}}_{j} \biggr),
\end{equation}
where
\begin{align*}
d\hat{x}_j &= dx_0\wedge \dots \wedge dx_{j-1} \wedge dx_{j+1} \wedge \dots
\wedge dx_{p}\wedge d\tilde{x}_{p+1}\wedge \dots \wedge d\tilde{x}_{p+q}
\qquad\text{if } 0 \leq j \leq p,  \\
d\hat{\tilde{x}}_j &= dx_0\wedge\dots\wedge dx_{p}\wedge d\tilde{x}_{p+1}\wedge
\dots \wedge d\tilde{x}_{j-1} \wedge d\tilde{x}_{j+1} \wedge \dots
\wedge d\tilde{x}_{p+q}
\qquad\text{if } p+1 \leq j \leq p+q.
\end{align*}

We have the following real analogue of Lemma \ref{productrulez};
its proof is the same as in the complex case.

\begin{lemma}  \label{productrulex}
Let $U \subset \R^{p+q+1}$ be an open set and let $f: U \to {\mathcal A}_{p,q}$, 
$g: U \to \tilde{M}_{p,q}$ be ${\mathcal C}^1$ functions, where
$\tilde{M}_{p,q}$ is a right ${\mathcal A}_{p,q}$-module. Then,
\begin{equation*}
d(gD_{p,q}xf)=(g\nabla^+_{p,q})f dV_{p,q} + g(\nabla^+_{p,q}f) dV_{p,q}
\qquad \text{and} \qquad
d(gD_{p,q}x)=(g\nabla^+_{p,q}) dV_{p,q}.
\end{equation*}
Similarly, if $f: U \to M_{p,q}$ and $g: U \to {\mathcal A}_{p,q}$
are ${\mathcal C}^1$ functions, where $M_{p,q}$ is a left
${\mathcal A}_{p,q}$-module,
\begin{equation*}
d(gD_{p,q}xf)=(g\nabla^+_{p,q})f dV_{p,q} + g(\nabla^+_{p,q}f) dV_{p,q}
\qquad \text{and} \qquad
d(D_{p,q}xf) = (\nabla^+_{p,q}f) dV_{p,q}.
\end{equation*}
\end{lemma}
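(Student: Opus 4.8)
The plan is to derive Lemma \ref{productrulex} directly from Lemma \ref{productrulez} by restriction, exactly as the text promises (``its proof is the same as in the complex case''). Concretely, I would observe that the inclusion $\iota: \R^{p+q+1} \hookrightarrow \C^{p+q+1}$ is compatible with all the relevant structures: by \eqref{gen-rel} the Dirac operator $\nabla^+_{p,q}$ is the restriction of $\nabla^+_\C$ (in the sense that $\nabla_\C^+$ applied to a holomorphic extension of $f$, then restricted to $\R^{p+q+1}$, equals $\nabla^+_{p,q} f$; this uses $\partial/\partial z_j|_{\R^{p+q+1}} = \partial/\partial x_j$ for $0 \le j \le p$ and $= -i\,\partial/\partial\tilde x_j$ for $p+1 \le j \le p+q$, combined with $e_j \mapsto e_j$ and $e_j \mapsto i\tilde e_j$, so the two factors of $\pm i$ cancel to give the sign $-\tilde e_j \partial/\partial\tilde x_j$). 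Similarly $D_{p,q}x = D_{p+q}z|_{\R^{p+q+1}}$ and $dV_{p,q} = dV_\C|_{\R^{p+q+1}}$ by Definitions \ref{Dxinnerproductdef} and \eqref{factorsofi}. Since pullback along $\iota$ commutes with the exterior derivative $d$, applying $\iota^*$ to the identities in \eqref{productrulez1} yields the claimed identities for $dD_{p,q}xf$ etc.

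There is a subtlety: Lemma \ref{productrulez} is stated for holomorphic $f,g$, while here $f,g$ are only ${\mathcal C}^1$ (and real-variable), so one cannot literally invoke the complex lemma. The honest route is to redo the computation in the real setting, which is a short direct verification. I would start from the explicit formula \eqref{Dx-explicit} for $D_{p,q}x$, compute $d(gD_{p,q}xf)$ using the Leibniz rule for the exterior derivative on ${\mathcal A}_{p,q}$-valued forms, and collect terms. Each term $e_j\,d\hat x_j$ contributes $dx_j \wedge d\hat x_j = (-1)^j\,dx_0\wedge\dots\wedge d\tilde x_{p+q}$ (and analogously $d\tilde x_j\wedge d\hat{\tilde x}_j$ for the $q$-block), so the signs $(-1)^j$ in \eqref{Dx-explicit} are exactly cancelled, leaving $\sum_{j=0}^p e_j\,\partial/\partial x_j - \sum_{j=p+1}^{p+q}\tilde e_j\,\partial/\partial\tilde x_j$ hitting $f$ from the left and $g$ from the right, times $i^q\,dx_0\wedge\dots\wedge d\tilde x_{p+q} = dV_{p,q}$. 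That gives $(g\nabla^+_{p,q})f\,dV_{p,q} + g(\nabla^+_{p,q}f)\,dV_{p,q}$. Setting $g \equiv e_0$ (or $f \equiv e_0$) specializes to the second identity in each pair; the module-valued versions are identical since the computation only uses the left/right module structure, never multiplication of two module elements.

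The main obstacle is bookkeeping rather than conceptual: getting the anticommutation signs and the sign $(-1)^j$ versus the orientation sign to cancel correctly across both the $p$-block and the $q$-block, and making sure the factor $i^q$ from $dV_{p,q}$ is carried consistently (it appears in $D_{p,q}x$ via \eqref{Dx-explicit} and in $dV_{p,q}$ via \eqref{factorsofi}, and the two occurrences must be reconciled — in fact $D_{p,q}x$ already contains the $i^q$, so $d(D_{p,q}x f)$ naturally produces $i^q\,dx_0\wedge\cdots = dV_{p,q}$ with no leftover). I expect the cleanest exposition is to first record that $D_{p+q}z|_{\R^{p+q+1}}$, $dV_\C|_{\R^{p+q+1}}$, and $\nabla^+_\C$ restrict to $D_{p,q}x$, $dV_{p,q}$, $\nabla^+_{p,q}$ respectively, and then note that Lemma \ref{productrulez}'s proof (which is a formal Leibniz-rule computation not using holomorphicity beyond differentiability) goes through verbatim with $z_j$ replaced by $x_j$ or $\tilde x_j$ and the appropriate signs inserted — hence the one-line justification ``its proof is the same as in the complex case.''
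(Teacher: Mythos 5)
Your proposal is correct and matches the paper's approach: the paper defers to ``its proof is the same as in the complex case,'' which is precisely the direct Leibniz-rule computation you carry out in your second paragraph, and your sign bookkeeping (the $(-1)^j$ in \eqref{Dx-explicit} cancelling $dx_j \wedge d\hat x_j = (-1)^j\, dx_0\wedge\cdots\wedge d\tilde x_{p+q}$, the minus on the $q$-block matching $-\tilde e_j\,\partial/\partial\tilde x_j$ in $\nabla^+_{p,q}$, and the overall $i^q$ from \eqref{Dx-explicit} reassembling into $dV_{p,q}$ via \eqref{factorsofi}) works out exactly as you describe. You are also right to discard the naive restriction argument: a $\mathcal C^1$ function on $\R^{p+q+1}$ need not be the restriction of a holomorphic function, so Lemma \ref{productrulez} cannot literally be pulled back, and the honest route is the direct verification you give.
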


\begin{corollary}  \label{closedcorrolary}
A ${\mathcal C}^1$ function $f$ on $U$ is $(p,q)$-left-monogenic if and only if
the $n$-form $D_{p,q}xf$ is closed on $U$.
Similarly, a ${\mathcal C}^1$ function $g$ on $U$ is $(p,q)$-right-monogenic
if and only if the $n$-form $gD_{p,q} x$ is closed on $U$.
\end{corollary}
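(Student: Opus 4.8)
The plan is to derive the corollary directly from Lemma \ref{productrulex}, in complete parallel with the way Corollary \ref{stokesapp} follows from Lemma \ref{productrulez} in the complex setting. Note that $D_{p,q}x$ is a $(p+q)$-form on the $(p+q+1)$-dimensional space $\R^{p+q+1}$, so for a ${\mathcal C}^1$ function $f$ the exterior derivative $d(D_{p,q}xf)$ is a top-degree form on $U$, and by the last identity in Lemma \ref{productrulex} it equals $(\nabla^+_{p,q}f)\,dV_{p,q}$.

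First I would record the easy implication: if $f$ is $(p,q)$-left-monogenic then $\nabla^+_{p,q}f = 0$ throughout $U$ by definition, so $d(D_{p,q}xf) = 0$ and the form $D_{p,q}xf$ is closed. For the converse, suppose $D_{p,q}xf$ is closed, so that $(\nabla^+_{p,q}f)\,dV_{p,q} = 0$ on $U$. By \eqref{factorsofi} the top-degree form $dV_{p,q} = i^q\, dx_0\wedge\dots\wedge d\tilde{x}_{p+q}$ is nowhere zero. Expanding $\nabla^+_{p,q}f = \sum_B (\nabla^+_{p,q}f)_B\, e_B$ in the basis $\{e_B\}$ of ${\mathcal A}_{p,q}$, the vanishing of $(\nabla^+_{p,q}f)\,dV_{p,q}$ amounts to $i^q (\nabla^+_{p,q}f)_B = 0$ for every $B$, hence $\nabla^+_{p,q}f = 0$ on $U$ and $f$ is $(p,q)$-left-monogenic. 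The right-monogenic assertion is proved identically, using instead the identity $d(gD_{p,q}x) = (g\nabla^+_{p,q})\,dV_{p,q}$ supplied by Lemma \ref{productrulex}.

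I do not anticipate any genuine obstacle here; the argument is a one-line consequence of the product rule of Lemma \ref{productrulex}. The only step deserving a moment's attention is the cancellation of $dV_{p,q}$ in the converse direction: one must observe that, although $dV_{p,q}$ carries the scalar factor $i^q$, it is still a nonvanishing top-degree form, so an ${\mathcal A}_{p,q}$-valued function multiplying it to zero must itself vanish identically.
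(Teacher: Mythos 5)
Your proof is correct and is precisely the argument the paper intends: the corollary is stated as an immediate consequence of Lemma \ref{productrulex}, just as Corollary \ref{stokesapp} follows from Lemma \ref{productrulez}, and the paper leaves the one-line deduction implicit. The only cosmetic remark is that $f$ takes values in a general left module $M_{p,q}$ rather than ${\mathcal A}_{p,q}$ itself, so rather than expanding in the basis $\{e_B\}$ one should simply note that $dV_{p,q}$ is a nowhere-vanishing scalar-valued top form, so $(\nabla^+_{p,q}f)\,dV_{p,q}=0$ forces $\nabla^+_{p,q}f=0$ pointwise whatever the target module.
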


Again, this corollary may serve as an alternative definition of
$(p,q)$-left- and $(p,q)$-right-monogenic functions.

We orient $\R^{p+q+1} \subset {\mathcal A}_{p,q}$ so that
\begin{equation*}
  \{e_0,e_1,e_2,\dots,e_p,\tilde{e}_{p+1},\dots,\tilde{e}_{p+q}\}
\end{equation*}
is a positively oriented basis.
For an open subset $U \subset \R^{p+q+1}$ with piecewise smooth boundary
$\partial U$, at each smooth boundary point $X \in \partial U$, let
\begin{equation*}
{\bf n}_X = n_0e_0+n_1e_1+\dots+n_pe_p+\tilde{n}_{p+1}\tilde{e}_{p+1}+\dots
+\tilde{n}_{p+q}\tilde{e}_{p+q}
\end{equation*}
be an outward pointing normal unit vector to $U$ at $X$
(with respect to \eqref{Eucl-norm}).
Then we can orient the boundary $\partial U$ so that a basis
$\{v_1,\dots,v_{p+q}\}$ of the tangent space $T_XU$ is positively oriented
if and only if $\{{\bf n}_X,v_1,\dots,v_{p+q}\}$ is a positively oriented
basis of $\R^{p+q+1}$.
Let $dS_{p,q}$ be the contraction of ${\bf n}_X$ with $dV_{p,q}$, so that
\begin{equation*}
dS_{p,q}(v_1,\dots,v_{p+q}) = dV_{p,q}({\bf n}_X,v_1,\dots,v_{p+q})
\end{equation*}
whenever $v_1,\dots,v_{p+q}$ are vectors in the tangent space $T_XU$ of
$\partial U$ at $X$.
Explicitly,
\begin{equation}  \label{dSexplicit}
dS_{p,q} = i^{q} \biggl( \sum_{j=0}^{p} (-1)^j n_jd\hat{x}_j
+\sum_{j=p+1}^{p+q}(-1)^jn_jd\hat{\tilde{x}}_j \biggr).
\end{equation}

\begin{lemma}
With $\bar{\bf n}_X$ denoting the complex conjugate of ${\bf n}_X$
(recall \eqref{compl-conj-Rn}), the restriction
\begin{equation*}
D_{p,q}x \eval_{\partial U} = \bar{\bf n}_X dS_{p,q}.
\end{equation*}
\end{lemma}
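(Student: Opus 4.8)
The plan is to verify the claimed equality pointwise, at an arbitrary smooth boundary point $X \in \partial U$, by evaluating both of the $\C^{p+q+1}$-valued $(p+q)$-forms $D_{p,q}x\eval_{\partial U}$ and $\bar{\bf n}_X\, dS_{p,q}$ on an arbitrary tuple of tangent vectors $v_1,\dots,v_{p+q} \in T_X(\partial U)$. (Both forms really do take values in $\C^{p+q+1}$: by \eqref{Dx-explicit} the form $D_{p,q}x$ is $i^q$ times an $\R^{p+q+1}$-valued form, while $dS_{p,q}$ is scalar-valued.) Since the bilinear form $\langle\,\cdot\,,\cdot\,\rangle$ is non-degenerate on $\R^{p+q+1}$ — and, writing $V-W = A+iB$ with $A,B \in \R^{p+q+1}$, the vanishing of $\langle X_0,V\rangle - \langle X_0,W\rangle$ for all $X_0 \in \R^{p+q+1}$ forces $A=B=0$ — it is enough to prove
\[
\bigl\langle X_0,\, D_{p,q}x(v_1,\dots,v_{p+q}) \bigr\rangle
= \bigl\langle X_0,\, \bar{\bf n}_X \bigr\rangle \cdot dS_{p,q}(v_1,\dots,v_{p+q})
\qquad \text{for all } X_0 \in \R^{p+q+1}.
\]

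By Definition \ref{Dxinnerproductdef} the left-hand side equals $dV_{p,q}(X_0,v_1,\dots,v_{p+q})$. Recall that ${\bf n}_X$ is a unit normal with respect to the Euclidean form \eqref{Eucl-norm} and that $T_X(\partial U)$ is precisely its orthogonal complement for that form; hence we may decompose $X_0 = c\,{\bf n}_X + w$, where $c$ is the value of the inner product associated with \eqref{Eucl-norm} on the pair $(X_0,{\bf n}_X)$ and $w \in T_X(\partial U)$. Then
\[
dV_{p,q}(X_0,v_1,\dots,v_{p+q})
= c\cdot dV_{p,q}({\bf n}_X,v_1,\dots,v_{p+q}) + dV_{p,q}(w,v_1,\dots,v_{p+q}),
\]
and the second term vanishes because $w,v_1,\dots,v_{p+q}$ are $p+q+1$ vectors lying in the $(p+q)$-dimensional space $T_X(\partial U)$, so they are linearly dependent and the alternating form $dV_{p,q}$ kills them. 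By the defining property of $dS_{p,q}$ as the contraction of ${\bf n}_X$ with $dV_{p,q}$, the first term equals $c\cdot dS_{p,q}(v_1,\dots,v_{p+q})$.

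It remains to identify the scalar $c$ with $\langle X_0,\bar{\bf n}_X\rangle$. Writing ${\bf n}_X = \sum_{j=0}^{p} n_j e_j + \sum_{j=p+1}^{p+q} \tilde{n}_j \tilde{e}_j$, formula \eqref{compl-conj-Rn} gives $\bar{\bf n}_X = \sum_{j=0}^{p} n_j e_j - \sum_{j=p+1}^{p+q} \tilde{n}_j \tilde{e}_j$; feeding this into $\langle\,\cdot\,,\cdot\,\rangle$, the sign change on the $\tilde{e}_j$-components exactly cancels the signature sign in the bilinear form, so that $\langle X_0,\bar{\bf n}_X\rangle = \sum_{j=0}^{p}(x_0)_j n_j + \sum_{j=p+1}^{p+q}(\tilde{x}_0)_j \tilde{n}_j$, which is precisely the inner product associated with \eqref{Eucl-norm}, i.e.\ $c$. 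Combining the displays yields the required identity, and hence the lemma. The main thing to keep honest in the write-up is the bookkeeping: the common factor $i^q$ sitting in front of $dV_{p,q}$, $D_{p,q}x$ and $dS_{p,q}$ plays no role beyond making the forms $\C$-valued, and ``restriction to $\partial U$'' must be read as pull-back along the inclusion $\partial U \hookrightarrow \R^{p+q+1}$ — which is exactly what licenses discarding the tangential component $w$. A less conceptual route would substitute the explicit expressions \eqref{Dx-explicit} and \eqref{dSexplicit} and use that, restricted to $T_X(\partial U)$, the top forms $d\hat{x}_j$ and $d\hat{\tilde{x}}_j$ are all proportional with ratios governed by the components of ${\bf n}_X$; this is doable but messier and obscures the geometry.
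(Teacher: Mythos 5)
Your argument is correct, and it is genuinely different from the paper's. The paper's proof is computational: it uses the vanishing of $n_0\,dx_0+\dots+\tilde n_{p+q}\,d\tilde x_{p+q}$ on $T_X\partial U$ to isolate $dx_0$, substitutes into the explicit expansions \eqref{dSexplicit} and \eqref{Dx-explicit}, and manipulates wedge products to show that both $dS_{p,q}$ and $D_{p,q}x\eval_{\partial U}$ reduce to multiples of $\tfrac{i^q}{n_0}\,d\hat x_0$. Your proof instead works entirely from the defining properties: it tests the $\C^{p+q+1}$-valued identity against arbitrary $X_0\in\R^{p+q+1}$ via the non-degenerate pairing, reduces the left side to $dV_{p,q}(X_0,v_1,\dots,v_{p+q})$ using Definition \ref{Dxinnerproductdef}, and then splits $X_0$ into its Euclidean-normal and tangential components, with the tangential part killed by linear dependence and the normal part producing $dS_{p,q}$ by its contraction definition. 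The one genuinely new observation you need — and which you supply — is that the conjugation in \eqref{compl-conj-Rn} converts the indefinite pairing $\langle X_0,\bar{\bf n}_X\rangle$ into the Euclidean inner product, which is exactly the coefficient $c$ in the decomposition. This version is coordinate-free (no need for the ``WLOG $n_0\neq 0$'' step), explains conceptually why the conjugate $\bar{\bf n}_X$ appears rather than ${\bf n}_X$, and avoids tracking sign conventions through explicit wedge-product manipulations; the cost is a slightly longer preamble justifying that testing against $X_0\in\R^{p+q+1}$ determines a $\C^{p+q+1}$-valued quantity, which you handle correctly.
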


\begin{proof}
By definition, ${\bf n}_X$ is orthogonal to the tangent space $T_X \partial U$.
Thus, for every vector $X=x_0e_0+x_1e_1+\dots+x_pe_p
+\tilde{x}_{p+1}\tilde{e}_{p+1}+\dots+\tilde{x}_{p+q}\tilde{e}_{p+q}$
in $T_X \partial U$, we have
\begin{equation*}
\sum_{j=0}^pn_jx_j + \sum_{j=p+1}^{p+q} \tilde{n}_j \tilde{x}_{j}=0,
\end{equation*}
and hence
\begin{equation*}
n_0dx_0+\dots+n_{p}dx_p+\tilde{n}_{p+1}d\tilde{x}_{p+1}+\dots
+ \tilde{n}_{p+q}d\tilde{x}_{p+q}=0.
\end{equation*}
Without loss of generality we may suppose $n_0 \neq 0$,
then we can isolate $dx_0$:
\begin{equation}  \label{isodx0}
dx_0=-\frac{1}{n_0}(n_1dx_1+ n_2dx_2+\dots + n_pdx_{p}
+ \tilde{n}_{p+1}d\tilde{x}_{p+1}+\dots +\tilde{n}_{p+q}d\tilde{x}_{p+q}).
\end{equation}
We introduce the symbol $d\check{x}_j$ denoting wedge products of all but
$dx_0$, $dx_j$:
\begin{align*}
d\check{x}_j &= dx_1\wedge \dots \wedge dx_{j-1} \wedge dx_{j+1} \wedge \dots
\wedge dx_{p}\wedge d\tilde{x}_{p+1}\wedge \dots \wedge d\tilde{x}_{p+q}
\qquad\text{if } 1 \leq j \leq p,  \\
d\check{\tilde{x}}_j &= dx_1\wedge\dots\wedge dx_{p}\wedge d\tilde{x}_{p+1}\wedge
\dots \wedge d\tilde{x}_{j-1} \wedge d\tilde{x}_{j+1} \wedge \dots
\wedge d\tilde{x}_{p+q}
\qquad\text{if } p+1 \leq j \leq p+q.
\end{align*}
Then
\begin{align*}
dx_j\wedge d\check{x}_j &=(-1)^{j-1} d\hat{x}_0 \quad\text{if } 1 \leq j \leq p,
\\
d\tilde{x}_j\wedge d\check{\tilde{x}}_j &=(-1)^{j-1} d\hat{x}_0
\quad\text{if } p+1 \leq j \leq p+q.
\end{align*}
Substituting \eqref{isodx0} into the expression \eqref{dSexplicit} for
$dS_{p,q}$ and using these notations,
\begin{multline*}
dS_{p,q} = i^q \biggl( n_0d\hat{x}_0
+ \sum_{j=1}^p (-1)^j \Bigl(-\frac{n_j^2}{n_0}\Bigr) dx_j \wedge d\check{x}_j)
+ \sum_{j=p+1}^{p+q} (-1)^j \Bigl(-\frac{\tilde{n}_j^2}{n_0}\Bigr)
d\tilde{x}_j \wedge d\check{x}_j \biggr) \\
=\frac{i^q}{n_0} \Biggl( \sum_{j=0}^p n_j^2
+ \sum_{j=p+1}^{p+q} \tilde{n}_j^2 \biggr) d\hat{x}_0
=\frac{i^q}{n_0} d\hat{x}_0,
\end{multline*}
since ${\bf n}_X$ is a unit vector.
We perform a similar computation for $D_{p,q}x$ substituting \eqref{isodx0}
into \eqref{Dx-explicit} to get
\begin{multline*}
D_{p,q}x \eval_{\partial U} = i^q \biggl( \sum_{j=0}^p (-1)^j e_j d\hat{x}_j
- \sum_{j=p+1}^{p+q} (-1)^j \tilde{e}_j d\hat{\tilde{x}}_j \biggr)   \\
=i^q \biggl( n_0d\hat{x}_0
+ \sum_{j=1}^p(-1)^j \Bigl(-\frac{n_j}{n_0}\Bigr) e_j dx_j\wedge d\check{x}_j
-\sum_{j=p+1}^{p+q} (-1)^{j} \Bigl(-\frac{\tilde{n}_j}{n_0}\Bigr)
\tilde{e}_j d\tilde{x}_j\wedge d\check{\tilde{x}}_j \biggr)  \\
= \frac{i^q}{n_0} \biggl(\sum_{j=0}^p n_je_j
- \sum_{j=p+1}^{p+q} \tilde{n}_j\tilde{e}_j \biggr) d\hat{x}_0
= \frac{i^q}{n_0} \bar{\bf n}_X d\hat{x}_0
= \bar{\bf n}_X dS_{p,q}.
\end{multline*}
\end{proof}

Let $K_r$ and $S_r$ be the boundaries of the sets
$\{X \in \R^{p+q+1} ;\: N(X)\leq r\}$ and
$\{X \in \R^{p+q+1} ;\: \norm{X}^2 \leq r\}$ respectively.
Note that the outward pointing normal vectors at $X$ will be
respectively $\bar{X} / \norm{X}$ and $X / \norm{X}$,
so we get the following analogue of Lemma 3 of \cite{L}:

\begin{corollary} \label{dSlemma}
\begin{equation*}
D_{p,q}x \eval_{K_r} = \frac{X}{\norm{X}}dS_{p,q}, \qquad
D_{p,q}x \eval_{S_r} = \frac{\bar{X}}{\norm{X}}dS_{p,q}
= \frac{\bar{X}}r dS_{p,q}.
\end{equation*}
\end{corollary}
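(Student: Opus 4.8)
\emph{Proof proposal.}
The plan is to deduce both identities directly from the lemma just established, namely $D_{p,q}x \eval_{\partial U} = \bar{\bf n}_X\, dS_{p,q}$ for any open $U \subset \R^{p+q+1}$ with piecewise smooth boundary; it then suffices to identify the outward unit normal ${\bf n}_X$ along $K_r$ and along $S_r$ (these were already asserted in the text preceding the corollary) and to apply the complex conjugation \eqref{compl-conj-Rn}.

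First I would handle $K_r$. For $r \neq 0$ the boundary $K_r = \{X ;\, N(X) = r\}$ is a smooth hypersurface: the gradient of $N(X) = \sum_{j=0}^p x_j^2 - \sum_{j=p+1}^{p+q} \tilde{x}_j^2$ with respect to the Euclidean structure \eqref{Eucl-norm} has coordinate vector $(2x_0, \dots, 2x_p, -2\tilde{x}_{p+1}, \dots, -2\tilde{x}_{p+q})$, which under the identification $\R^{p+q+1} \subset {\mathcal A}_{p,q}$ is precisely $2\bar{X}$, and this is nonzero on $K_r$ because $N(X) = r \neq 0$ forces $X \neq 0$. Since this gradient points toward increasing $N$, it is outward pointing for $\{N \leq r\}$, and as $\norm{\bar{X}} = \norm{X}$ by \eqref{Eucl-norm} we obtain ${\bf n}_X = \bar{X}/\norm{X}$. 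The preceding lemma then gives
\begin{equation*}
D_{p,q}x \eval_{K_r} = \bar{\bf n}_X\, dS_{p,q} = \overline{\Bigl( \frac{\bar{X}}{\norm{X}} \Bigr)}\, dS_{p,q} = \frac{X}{\norm{X}}\, dS_{p,q},
\end{equation*}
using that the complex conjugation \eqref{compl-conj-Rn} is $\R$-linear, fixes the real scalar $\norm{X}$, and is an involution, so that $\overline{\bar{X}} = X$.

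The computation for $S_r$ is the same with $N$ replaced by $\norm{\cdot}^2$: for $r > 0$ the gradient of $\norm{X}^2 = \sum_{j=0}^p x_j^2 + \sum_{j=p+1}^{p+q} \tilde{x}_j^2$ is $2X$, which is outward pointing for $\{\norm{X}^2 \leq r\}$, so ${\bf n}_X = X/\norm{X}$ and the preceding lemma yields $D_{p,q}x \eval_{S_r} = \bar{\bf n}_X\, dS_{p,q} = (\bar{X}/\norm{X})\, dS_{p,q}$; substituting the value of $\norm{X}$ on $S_r$ produces the last form of the stated identity.

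I do not expect a genuine obstacle: the substance is bookkeeping already packaged in the preceding lemma. The points needing care are choosing the outward rather than the inward normal, and noticing --- this is exactly what makes the two formulas differ --- that since $N$ is indefinite its Euclidean gradient is $\bar{X}$ rather than $X$, so $K_r$ contributes the factor $X/\norm{X}$ while $S_r$ contributes $\bar{X}/\norm{X}$. It is also worth remarking that the reduction to the case $n_0 \neq 0$ in the preceding lemma is harmless here, since at each point of a regular level set at least one component of ${\bf n}_X$ is nonzero and the argument there is symmetric in the coordinates.
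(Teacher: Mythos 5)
Your proposal is correct and follows essentially the same route as the paper: the paper simply asserts in the sentence preceding the corollary that the outward unit normals on $K_r$ and $S_r$ are $\bar{X}/\norm{X}$ and $X/\norm{X}$, then invokes the lemma $D_{p,q}x \eval_{\partial U} = \bar{\bf n}_X\, dS_{p,q}$. You supply the missing gradient computation showing why the Euclidean gradient of the indefinite form $N$ is $2\bar{X}$ while that of $\norm{\cdot}^2$ is $2X$, which is exactly the observation the paper leaves implicit.
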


These two expressions are the same in the positive definite case (when $q=0$).

We state the classical Cauchy's integral formulas for left- and
right-monogenic functions in the positive definite case ($q=0$).
For details and proofs see, for example, \cite{BDS, DSS, GM}.
Recall that in this case we write ${\mathcal A}_{n}$ instead of
${\mathcal A}_{n,0}$.

\begin{theorem}  \label{standardintegralformula}
Let $n \geq 2$.
Let $U \subset \R^{n+1}\subset {\mathcal A}_{n}$ be an open bounded set with
piecewise ${\mathcal C}^1$ boundary $\partial U$, and let $M_n$ be a left
${\mathcal A}_{n}$-module.
Suppose that an $M_n$-valued function $f$ is $(n,0)$-left-monogenic function
on a neighborhood of the closure $\overline{U}$. We have:
\begin{equation*}
\int_{\partial U} G_{n,0}(X-X_0)D_{n,0}x f(X) =
\begin{cases} \omega_n f(X_0) & \text{if } X_0\in U, \\
0 & \text{if } X_0 \notin \overline{U}, \end{cases}
\end{equation*}
where $\omega_n$ denotes the $n$-dimensional volume of the unit $n$-sphere
in $\R^{n+1}$.

Similarly, if $\tilde M_n$ is a right ${\mathcal A}_{n}$-module and $g$ is an
$\tilde M_n$-valued function that is $(n,0)$-right-monogenic on a neighborhood
of $\overline{U}$, then
\begin{equation*}
\int_{\partial U} g(X)D_{n,0}x G_{n,0}(X-X_0) =
\begin{cases} \omega_n g(X_0) & \text{if } X_0\in U, \\
0 & \text{if } X_0\notin \overline{U}. \end{cases}
\end{equation*}
\end{theorem}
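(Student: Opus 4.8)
The plan is to follow the standard Clifford-analytic argument, which proceeds in two cases according to whether $X_0 \in U$ or $X_0 \notin \overline{U}$, and which reduces everything to the two ingredients already set up: the fact that $G_{n,0}(X-X_0)$ is $(n,0)$-left- and right-monogenic away from $X = X_0$ (stated immediately after the definition of $G_{p,q}$, specialized to $q=0$), and the product rule together with the closedness criterion of Corollary \ref{closedcorrolary}. Since $\mathcal{N}_{n,0} = \{0\}$, the function $G_{n,0}(X-X_0)$ is smooth on $\R^{n+1}\setminus\{X_0\}$, so there are no contour-intersection issues in the positive definite case and the classical Stokes' theorem argument applies verbatim.

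First I would treat the case $X_0 \notin \overline{U}$. Here $G_{n,0}(X-X_0)$ is $(n,0)$-right-monogenic and $f$ is $(n,0)$-left-monogenic on a neighborhood of $\overline{U}$, so by the product rule (Lemma \ref{productrulex}) the $n$-form $G_{n,0}(X-X_0)\,D_{n,0}x\,f(X)$ satisfies $d\bigl(G_{n,0}(X-X_0)D_{n,0}xf(X)\bigr) = (G_{n,0}(X-X_0)\nabla^+_{n,0})f(X)\,dV_{n,0} + G_{n,0}(X-X_0)(\nabla^+_{n,0}f(X))\,dV_{n,0} = 0$ on that neighborhood; hence $\int_{\partial U}$ of it vanishes by Stokes' theorem. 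Next, for $X_0 \in U$, I would excise a small Euclidean ball $B_\epsilon$ of radius $\epsilon$ centered at $X_0$, apply the same closedness argument on $U \setminus \overline{B_\epsilon}$ to get $\int_{\partial U} = \int_{S_\epsilon(X_0)}$ where $S_\epsilon(X_0) = \partial B_\epsilon$ oriented as the boundary of the ball, and then evaluate $\lim_{\epsilon\to 0^+}\int_{S_\epsilon(X_0)} G_{n,0}(X-X_0)D_{n,0}xf(X)$. On $S_\epsilon(X_0)$, translating so $X_0 = 0$, Corollary \ref{dSlemma} (with $q=0$, so $\bar X = X$) gives $D_{n,0}x\big|_{S_\epsilon} = \frac{X}{\norm{X}}dS_{n,0} = \frac{X}{\epsilon}dS_{n,0}$, and $G_{n,0}(X) = \frac{X^+}{N(X)^{(n+1)/2}} = \frac{X^+}{\norm{X}^{n+1}} = \frac{X^+}{\epsilon^{n+1}}$ since $N(X) = \norm{X}^2$ when $q=0$; thus $G_{n,0}(X)D_{n,0}x = \frac{X^+ X}{\epsilon^{n+2}}dS_{n,0} = \frac{N(X)}{\epsilon^{n+2}}dS_{n,0} = \frac{1}{\epsilon^{n}}dS_{n,0}$, the scalar-valued surface measure renormalized to total mass $\omega_n$ on $S_\epsilon$. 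Consequently $\int_{S_\epsilon(X_0)} G_{n,0}(X-X_0)D_{n,0}xf(X) = \epsilon^{-n}\int_{S_\epsilon(X_0)} f(X)\,dS_{n,0}$, which by continuity of $f$ tends to $\omega_n f(X_0)$ as $\epsilon \to 0^+$.

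The right-monogenic statement is entirely parallel: the same product rule gives $d\bigl(g(X)D_{n,0}xG_{n,0}(X-X_0)\bigr) = 0$ wherever $g$ is $(n,0)$-right-monogenic and $G_{n,0}(X-X_0)$ is $(n,0)$-left-monogenic, and on the small sphere one computes $D_{n,0}xG_{n,0}(X) = \frac{X X^+}{\epsilon^{n+2}}dS_{n,0} = \epsilon^{-n}dS_{n,0}$ by the same identity $XX^+ = N(X)$, so the limit produces $\omega_n g(X_0)$. I expect the main (minor) obstacle to be bookkeeping the orientation conventions: one must check that the boundary orientation on $S_\epsilon(X_0)$ induced as $\partial(U\setminus\overline{B_\epsilon})$ is the \emph{inward} normal to the ball, i.e. the opposite of the $S_r$ orientation in Corollary \ref{dSlemma}, and confirm this sign cancels against the sign produced when moving the sphere term to the other side of the Stokes identity, so that the final coefficient is $+\omega_n$ rather than $-\omega_n$. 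Everything else is a direct specialization of the machinery already developed, with the crucial simplification that $\mathcal{N}_{n,0}=\{0\}$ makes the reproducing kernel genuinely integrable against $\partial U$.
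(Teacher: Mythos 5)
The paper does not prove this theorem; it only cites \cite{BDS, DSS, GM} for the positive-definite case. Your excision argument is the standard one from that literature and is correct: the form $G_{n,0}(X-X_0)\,D_{n,0}x\,f(X)$ is closed away from $X_0$ by Lemma \ref{productrulex}, Stokes reduces $\int_{\partial U}$ to $\int_{S_\epsilon}$ with the outward ball orientation (the sign you flag resolves exactly as you expect, since $\partial(U\setminus\overline{B_\epsilon})$ carries the inward orientation on $S_\epsilon$, which flips upon moving that term across the equality), and on the sphere Corollary \ref{dSlemma} together with $\bar X = X$ and $N(X)=\norm{X}^2$ for $q=0$ collapse the kernel to $\epsilon^{-n}\,dS_{n,0}$, yielding $\omega_n f(X_0)$ in the limit.
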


\section{First Cauchy's Integral Formula}  \label{5}

In this section we present an integral formula for
$(p,q)$-monogenic functions that have complex holomorphic extension to
some open neighborhood in $\C^{p+q+1}$.
The statement of the formula involves a one-parameter deformation map
$h_{\epsilon}$ that is used to deform the contour of integration so that
it does not cross the singularities of $N(X-X_0)^{-\frac{p+q+1}2}$.

\begin{definition}
For $0\leq \epsilon\leq 1$, let $h_{\epsilon}: \C^{p+q+1} \to \C^{p+q+1}$
be a linear map defined by
\begin{equation*}
Z=\sum_{j=0}^{p+q}z_j e_j \quad \mapsto \quad
h_{\epsilon}(Z)=\sum_{j=0}^p (1+i\epsilon)z_je_j
+\sum_{j=p+1}^{q+1} (1-i\epsilon)z_je_j.
\end{equation*}
\end{definition}

Note that the map $h_{\epsilon}$ depends on $p$ and $q$, but we suppress this
dependence in order to avoid cumbersome notations.

\begin{corollary}  \label{nofxhcorollary}
If $X=\sum_{j=0}^{p} x_j e_j+\sum_{j=p+1}^{p+q} \tilde{x}_j\tilde{e_j}
\in \R^{p+q+1}\subset {\mathcal A}_{p,q}\subset {\mathcal A}_{p+q}^\C$, then
\begin{equation} \label{N(h-epsilon)}
N(h_{\epsilon}(X))=(1-\epsilon^2)N(X)+2i\epsilon \norm{X}^2.
\end{equation}
In particular, when $\epsilon=1$,
\begin{equation*}
N(h_1(X))=2i\norm{X}^2.
\end{equation*}
\end{corollary}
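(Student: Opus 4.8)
The plan is a direct computation. First I would express $X$ in the complex coordinates of $\C^{p+q+1} \subset {\mathcal A}_{p+q}^\C$. Since the embedding $\iota: {\mathcal A}_{p,q} \hookrightarrow {\mathcal A}_{p+q}^\C$ sends $\tilde e_j \mapsto ie_j$ by \eqref{gen-rel}, the vector $X = \sum_{j=0}^p x_je_j + \sum_{j=p+1}^{p+q}\tilde x_j\tilde e_j$ is, viewed inside $\C^{p+q+1}$, equal to $\sum_{j=0}^p x_je_j + \sum_{j=p+1}^{p+q}(i\tilde x_j)e_j$; in other words its $j$-th complex coordinate $z_j$ equals $x_j$ for $0 \le j \le p$ and equals $i\tilde x_j$ for $p+1 \le j \le p+q$. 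Applying the definition of $h_{\epsilon}$ then gives $h_{\epsilon}(X) = \sum_{j=0}^p(1+i\epsilon)x_je_j + \sum_{j=p+1}^{p+q}(1-i\epsilon)(i\tilde x_j)e_j$.

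Next I would substitute this into $N(Z) = \sum_{j=0}^{p+q}z_j^2$, obtaining $N(h_{\epsilon}(X)) = \sum_{j=0}^p(1+i\epsilon)^2 x_j^2 + \sum_{j=p+1}^{p+q}(1-i\epsilon)^2(i\tilde x_j)^2$. Using $(1\pm i\epsilon)^2 = (1-\epsilon^2)\pm 2i\epsilon$ together with $(i\tilde x_j)^2 = -\tilde x_j^2$, and then separating the part with real coefficient $1-\epsilon^2$ from the part with coefficient $2i\epsilon$, I would recognize $\sum_{j=0}^p x_j^2 - \sum_{j=p+1}^{p+q}\tilde x_j^2 = N(X)$ and $\sum_{j=0}^p x_j^2 + \sum_{j=p+1}^{p+q}\tilde x_j^2 = \norm{X}^2$ (the latter by \eqref{Eucl-norm}), which is exactly \eqref{N(h-epsilon)}. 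Setting $\epsilon = 1$ makes the coefficient $1-\epsilon^2$ vanish, leaving $N(h_1(X)) = 2i\norm{X}^2$.

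There is no genuine obstacle here; this is essentially a one-line calculation once the bookkeeping is in place. The only point I would be careful to spell out is the coordinate change in the first paragraph: one must remember that the complex coordinate attached to the basis vector $e_j$ arising from a $\tilde e_j$-direction is $i\tilde x_j$ rather than $\tilde x_j$, since otherwise the signs of both $N(X)$ and $\norm{X}^2$ in the final answer come out wrong.
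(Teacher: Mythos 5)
Your proof is correct. The paper states Corollary \ref{nofxhcorollary} without giving a proof (presumably considering the calculation routine), so there is no authorial argument to compare against; the direct computation you carry out is the only natural approach, and it is accurate. You correctly identify the one place where care is needed, namely that via the inclusion \eqref{gen-rel} the complex coordinate attached to $e_j$ for $p+1 \le j \le p+q$ is $z_j = i\tilde x_j$, not $\tilde x_j$; substituting into $N(Z) = \sum z_j^2$, expanding $(1 \pm i\epsilon)^2 = (1-\epsilon^2) \pm 2i\epsilon$, and regrouping then yields $(1-\epsilon^2)N(X) + 2i\epsilon\norm{X}^2$ exactly as you describe, with the $\epsilon = 1$ specialization immediate.
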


\begin{definition}
For $Z_0 \in \C^{p+q+1}$ fixed, we define
\begin{equation*}
h_{\epsilon,Z_0}: Z \quad \mapsto \quad Z_0 + h_{\epsilon}(Z-Z_0).
\end{equation*}
\end{definition}

\begin{theorem}  \label{firstintegralformula}
Let $p+q \geq 2$.
Let $U\subset \R^{p+q+1}\subset {\mathcal A}_{p,q}$ be an open bounded set
with piecewise ${\mathcal C}^1$ boundary $\partial U$, and let
$M_{p+q}^\C$ be a left ${\mathcal A}_{p+q}^\C$-module.
Suppose that an $M_{p+q}^\C$-valued function $f$ is $(p,q)$-left-monogenic
on a neighborhood of the closure $\overline{U}$.
Furthermore, suppose $f$ extends to a complex left-monogenic function
$f^\C: W^\C \to M_{p+q}^\C$, with
$W^\C \subset \C^{p+q+1} \subset {\mathcal A}_{p+q}^\C$
an open subset containing $\overline{U}$. We have:
\begin{equation*}
\int_{(h_{\epsilon,X_0})_*(\partial U)} G_{p+q}(Z-X_0)D_{p+q}z f^\C(Z)
=\begin{cases} \omega_{p+q} f(X_0) & \text{if } X_0 \in U, \\
0 & \text{if } X_0 \notin \overline{U}, \end{cases}
\end{equation*}
for all $\epsilon> 0$ sufficiently close to $0$.

Similarly, if $\tilde{M}_{p+q}^\C$ is a right ${\mathcal A}_{p+q}^\C$-module
and $g$ is an $\tilde{M}_{p+q}^\C$-valued function that is
$(p,q)$-right-monogenic on a neighborhood of $\overline{U}$ and can be extended
to a complex right-monogenic function $g^\C: W^\C \to \tilde{M}_{p+q}^\C$, then
\begin{equation*}
\int_{(h_{\epsilon,X_0})_*(\partial U)} g^\C(Z) D_{p+q}z G_{p+q}(Z-X_0)
=\begin{cases}\omega_{p+q} g(X_0) & \text{if } X_0 \in U, \\
0 & \text{if } X_0 \notin \overline{U}, \end{cases}
\end{equation*}
for all $\epsilon> 0$ sufficiently close to $0$.
\end{theorem}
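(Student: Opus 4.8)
The plan is to reduce the statement to the classical positive‑definite Cauchy formula (Theorem~\ref{standardintegralformula}) by exploiting the complex holomorphic extension $f^\C$ together with the deformation $h_{\epsilon,X_0}$. First I would observe that, by Corollary~\ref{stokesapp}, the $n$‑form $\omega(Z) = G_{p+q}(Z-X_0)\,D_{p+q}z\,f^\C(Z)$ is a closed holomorphic $n$‑form on $W^\C \setminus \{Z : N(Z-X_0) \le 0\}$, since $G_{p+q}(Z-X_0)$ is complex right‑monogenic away from the null cone of $N(\,\cdot - X_0)$ and $f^\C$ is complex left‑monogenic; the product rule of Lemma~\ref{productrulez} then gives $d\omega = 0$ there. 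The key geometric input is Corollary~\ref{nofxhcorollary}: for $X \in \R^{p+q+1}$ with $X \neq X_0$ one has $N(h_\epsilon(X-X_0)) = (1-\epsilon^2)N(X-X_0) + 2i\epsilon\|X-X_0\|^2$, whose imaginary part $2\epsilon\|X-X_0\|^2$ is strictly positive whenever $\epsilon>0$ and $X \neq X_0$. Hence for every $\epsilon \in (0,1]$ the deformed contour $(h_{\epsilon,X_0})_*(\partial U)$ avoids the singular set $\{N(Z-X_0)\le 0\}$ entirely (it avoids even $\{N(Z-X_0)\in\R\}$ except possibly at points where $X=X_0$, but $X_0\notin\partial U$), provided $\partial U$ itself does not contain $X_0$; this is the case when $X_0\in U$ or $X_0\notin\overline U$. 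One also needs $(h_{\epsilon,X_0})_*(\partial U)\subset W^\C$ for small $\epsilon$, which follows because $h_{\epsilon,X_0}\to\mathrm{id}$ uniformly on the compact set $\partial U$ as $\epsilon\to0$ and $W^\C$ is open containing $\overline U$.

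Next I would run a homotopy/Stokes argument. Consider the family of contours $\Sigma_t = (h_{t\epsilon,X_0})_*(\partial U)$ for $t\in[0,1]$; these sweep out an $(n{+}1)$‑chain $\Xi$ in $W^\C$ with $\partial\Xi = \Sigma_1 - \Sigma_0 = (h_{\epsilon,X_0})_*(\partial U) - \partial U$. In the case $X_0\notin\overline U$, the whole homotopy stays in the region where $\omega$ is defined and closed (since $X_0\notin\overline U$ means $N(X-X_0)$ can be made bounded away from the bad set on a neighborhood, and the small imaginary perturbation keeps us off it for all $t$); Stokes' theorem gives $\int_{(h_{\epsilon,X_0})_*(\partial U)}\omega = \int_{\partial U}\omega$, and the latter vanishes because $\partial U$ bounds $U$ on which $\omega$ is closed — more precisely, $\omega$ restricted to $\R^{p+q+1}$ near $\overline U$ equals $G_{p,q}(X-X_0)D_{p,q}x\,f(X)$ up to the harmless constant factor $i^q$ coming from \eqref{factorsofi}, and the right‑hand side of Theorem~\ref{standardintegralformula} (adapted: the argument there is local and purely Stokes‑theoretic, not using positive definiteness once the contour misses the singularities) gives $0$. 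In the case $X_0\in U$, the homotopy from $\partial U$ to $(h_{\epsilon,X_0})_*(\partial U)$ may cross the singular cone, so instead I would first deform $\partial U$ inside $\R^{p+q+1}$ to a small sphere $S_\rho$ around $X_0$ of Euclidean radius $\rho$ (legitimate since $\omega|_{\R^{p+q+1}}$ is closed on $U\setminus\{X_0\}$), then apply $h_{\epsilon,X_0}$ and homotope; on $(h_{\epsilon,X_0})_*(S_\rho)$ one computes the integral directly.

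For that direct computation, I would use Corollary~\ref{dSlemma}: on $S_\rho$ we have $D_{p,q}x = \bar X\rho^{-1}dS_{p,q}$ (here $\bar X$ means the complex conjugate relative to \eqref{compl-conj-Rn}, with $X$ shifted to be centered at $X_0$), and then pull back by $h_{\epsilon,X_0}$, writing $Z - X_0 = h_\epsilon(X-X_0)$. Plugging $G_{p+q}(Z-X_0) = (Z-X_0)^+ / N(Z-X_0)^{(p+q+1)/2}$ and expanding, the integrand becomes, after using $N(h_\epsilon(X-X_0)) = (1-\epsilon^2)\rho^2 N((X-X_0)/\|X-X_0\|\cdot\text{unit}) + 2i\epsilon\rho^2$ homogeneity and letting first $\rho\to0$ (using continuity of $f^\C$ at $X_0$, so $f^\C(Z)\to f(X_0)$) and then $\epsilon\to0$, a scalar integral over the sphere that is independent of $\epsilon$ by the closedness/homotopy invariance already established, hence can be evaluated at any convenient $\epsilon$; it equals $\omega_{p+q}$, the same normalization constant as in the positive‑definite case, because the algebraic identity $\int_{S^{p+q}} X^+ \cdot \bar X \, dS / \|X\|^{p+q+1}$‑type computation reproduces exactly the classical one (the $i^q$ from $dV_{p,q}$ cancels against a compensating factor from $h_\epsilon$ at $\epsilon$ chosen appropriately, or more cleanly one transports the classical computation through the complexification). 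The right‑monogenic case is identical with the order of factors reversed, using the second halves of Lemma~\ref{productrulez}, Corollary~\ref{stokesapp} and Theorem~\ref{standardintegralformula}.

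The main obstacle I expect is the bookkeeping in the last step: making the homotopy‑invariance and the $\rho\to0$, $\epsilon\to0$ limits rigorous and simultaneously tracking the constant so that it comes out to exactly $\omega_{p+q}$ rather than some multiple. The cleanest route is probably to prove homotopy invariance of $\int_{(h_{\epsilon,X_0})_*(\partial U)}\omega$ in $\epsilon$ first (so the value is a constant $C$ independent of $\epsilon$ and, by a further homotopy, of $U$ as long as $X_0\in U$), then compute $C$ in the simplest possible instance — e.g. take $U$ a Euclidean ball centered at $X_0$ and push $\epsilon$ to $1$, where by Corollary~\ref{nofxhcorollary} $N(h_1(X-X_0)) = 2i\|X-X_0\|^2$ is purely imaginary and the reproducing kernel simplifies dramatically, reducing the whole thing to a sphere integral that is formally identical to the positive‑definite one and hence yields $\omega_{p+q}$.
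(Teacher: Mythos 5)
Your high-level strategy (use Corollary~\ref{nofxhcorollary} to push the contour off the singular set, apply Stokes in the complexified space, reduce to the classical positive-definite Cauchy formula, and pin down the constant at a convenient value of $\epsilon$) is the right one and matches the paper in spirit. However, there is a genuine gap in the way you handle the real contour $\partial U$, and it is precisely the central difficulty that this theorem is designed to overcome.

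You repeatedly treat $\omega\eval_{\R^{p+q+1}} = G_{p,q}(X-X_0)\,D_{p,q}x\,f(X)$ as though its only singularity in $U$ were at the single point $X_0$. In the indefinite case $q\ge 1$ this is false: $G_{p,q}(X-X_0)$ blows up on the entire null cone $\{X : N(X-X_0)=0\}$, which is a genuine codimension-one cone in $\R^{p+q+1}$ and generically intersects both $U$ and $\partial U$ even when $X_0\notin\overline U$. Consequently, two of your steps break down. For $X_0\notin\overline U$, your homotopy $\Sigma_t=(h_{t\epsilon,X_0})_*(\partial U)$ passes through the singular set at $t=0$, and the claim ``$\int_{\partial U}\omega=0$ because $\partial U$ bounds $U$ on which $\omega$ is closed'' is not available since $\omega$ is singular on $U\cap\{N(X-X_0)=0\}$. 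For $X_0\in U$, the proposed \emph{real} deformation of $\partial U$ to a small Euclidean sphere $S_\rho$ is illegitimate for the same reason: $\omega\eval_{\R^{p+q+1}}$ is not closed (indeed not defined) on $U\setminus\{X_0\}$, only on $U\setminus\mathcal N_{p,q}(X_0)$, and the null cone separates $\partial U$ from any small sphere around $X_0$. The fix — and this is the paper's key move — is to do \emph{all} the Stokes/deformation work after applying $h_\epsilon$, inside $\C^{p+q+1}_G\cap W^\C$ where the form is honestly closed: for $X_0\notin\overline U$ one observes $(h_\epsilon)_*(\overline U)\subset\C^{p+q+1}_G\cap W^\C$ and applies Stokes to that $(p{+}q{+}1)$-chain directly to get zero; for $X_0\in U$ one applies Stokes to $(h_\epsilon)_*(\overline U\setminus B_r)$ to replace $(h_\epsilon)_*(\partial U)$ by $(h_\epsilon)_*S_r$.

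Your proposal for evaluating the resulting sphere integral is also under-specified. You suggest sending $\rho\to0$, $\epsilon\to0$ and ``transporting the classical computation through the complexification,'' with a vague remark about the factor $i^q$ canceling. The paper's argument is cleaner and worth internalizing: after deforming $\epsilon$ up to $1$ (permissible on the small sphere $S_r$, $r<\delta/2$, so one stays inside $W^\C$ — note that pushing $\epsilon$ to $1$ for a \emph{large} ball $U$ as you suggest need not keep the contour inside $W^\C$), one uses the complex rotation $Z\mapsto e^{-it}Z$, $t\in[0,\pi/4]$, which preserves $\|Z\|$ and rotates the phase of $N(Z)$, carrying $(h_1)_*S_r$ within $\C^{p+q+1}_G\cap W^\C$ onto a genuine Euclidean sphere of radius $r\sqrt2$ in $\spn_\R\{e_0,\dots,e_{p+q}\}$, i.e.\ into the positive-definite real form $\mathcal A_{p+q,0}$. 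There Theorem~\ref{standardintegralformula} applies verbatim and yields $\omega_{p+q}f(0)$, with no bookkeeping of $i^q$ needed. Your instinct about $\epsilon=1$ making $N(h_1(X-X_0))$ purely imaginary was pointing in this direction, but without the rotation step the reduction to the classical formula remains hand-waving.
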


\begin{remark}
For all $\epsilon \ne 0$ sufficiently close to $0$ the contour of integration
$(h_{\epsilon, X_0})_*(\partial U)$ lies inside $W^{\C}$ and the integrand
is non-singular, thus the integrals are well-defined.
Moreover, we will see that the value of the integral becomes constant when
the parameter $\epsilon$ is sufficiently close to $0$.
\end{remark}

\begin{remark}
  If we require $\epsilon <0$, then we need to insert a factor of $(-1)^q$
  into the right hand sides of the above formulas. Thus,
\begin{equation*}
\int_{(h_{\epsilon,X_0})_*(\partial U)} G_{p+q}(Z-X_0)D_{p+q}z f^\C(Z)
=\begin{cases} (-1)^q \omega_{p+q} f(X_0) & \text{if } X_0 \in U, \\
0 & \text{if } X_0 \notin \overline{U}, \end{cases}
\end{equation*}
for all $\epsilon< 0$ sufficiently close to $0$. 
And similarly for the other formula.
\end{remark}

\begin{proof}
We consider only the left-monogenic case, the right-monogenic case is similar.
By translation, we may assume that $X_0=0$.
Let $M=\sup_{X \in \partial U} \norm{X}$. We restrict $W^{\C}$ to be the
$\delta$-neighborhood of $\overline{U}$ in $\C^{p+q+1}$ for some sufficiently
small $\delta>0$. Consider $0 < |\epsilon| < \delta/M$.
If $X \in \R^{p+q+1} \setminus \{0\}$, then, by \eqref{N(h-epsilon)},
$N(h_{\epsilon}(X))$ is not a negative real,
hence $h_{\epsilon}(X) \in \C_{G}^{p+q+1}$.
Thus, we have that
$(h_{\epsilon})_*(\partial U)$ lies inside $\C_G^{p+q+1} \cap W^\C$.
By Lemma \ref{productrulez}, the integrand is a closed form in the region
$\C_G^{p+q+1} \cap W^\C$, so the integral stays unchanged for all
$-\delta/M < \epsilon < 0$ and $0< \epsilon < \delta/M$
(but the values of the integral may be different on these two intervals).
If $X_0 = 0 \notin \overline{U}$, we have that
$(h_{\epsilon})_*(\overline{U})\subseteq \C_G^{p+q+1} \cap W^{\C}$,
the integrand is a closed form, and the integral over $\partial U$ is zero,
thus we are done.

Now, suppose $X_0 = 0 \in U$.
Choose an $r>0$ sufficiently small such that $r<\delta/2$ and the ball
$B_r = \{X \in \R^{p+q+1}: \norm{X}\leq r\}$ is contained in $U$.
We orient the sphere $S_r=\{X \in \R^{p+q+1}: \norm{X}=r\}$ as
the boundary of $B_r$.
By Stokes' Theorem and \eqref{productrulez1} we have:
\begin{multline*}
\int_{(h_{\epsilon})_*(\partial U)} G_{p+q}(Z) D_{p+q}z f^\C(Z)
- \int_{(h_{\epsilon})_*S_r} G_{p+q}(Z) D_{p+q}z f^\C(Z)  \\
= \int_{(h_{\epsilon})_*(U \setminus B_r)} d(G_{p+q}(Z) D_{p+q}z f^\C(Z))
= \int_{(h_{\epsilon})_*(U \setminus B_r)} 0dV_\C =0.
\end{multline*}
Hence,
\begin{equation*}
\int_{(h_{\epsilon})_*(\partial U)} G_{p+q}(Z) D_{p+q}z f^\C(Z)
= \int_{(h_{\epsilon})_*S_r} G_{p+q}(Z) D_{p+q}z f^\C(Z).
\end{equation*}
By varying the parameter $\epsilon$ from its original value to $1$,
we can continuously deform $(h_{\epsilon})_*S_r$ into $(h_1)_*(S_r)$
within $\C_G^{p+q+1} \cap W^\C$ without changing the value of the integral
(here we are using $r<\delta/2$), and so we have:
\begin{equation*}
\int_{(h_{\epsilon})_*(\partial U)} G_{p+q}(Z) D_{p+q}z f^\C(Z)
= \int_{(h_1)_*S_r} G_{p+q}(Z) D_{p+q}z f^\C(Z).
\end{equation*}

Next, we rotate the sphere $(h_1)_*S_r$ using the map $Z \mapsto e^{-it}Z$,
$0 \leq t \leq \pi/4$. Since $N(e^{-it}Z) = e^{-2it} N(Z)$ and
$\norm{e^{-it}Z} = \norm{Z}$, as $t$ varies continuously from $0$ to $\pi/4$,
the sphere $(h_1)_*S_r$ gets rotated inside $\C_G^{p+q+1} \cap W^\C$
into $\tilde{S}_{r\sqrt{2}}$ -- the sphere of radius $r\sqrt{2}$ contained in
$\operatorname{Span}\{e_0,e_1,\dots,e_{p+q}\}
= \R^{p+q+1} \subset {\mathcal A}_{p+q,0}$.
Thus, we have by Stokes' Theorem and the classical integral formula for
the definite case (Theorem \ref{standardintegralformula}):
\begin{multline*}
\int_{(h_{\epsilon})_*(\partial U)} G_{p+q}(Z)D_{p+q}z f^\C(Z)
=\int_{\tilde{S}_{r\sqrt{2}}} G_{p+q}(Z)D_{p+q}z f^\C(Z) \\
=\int_{\tilde{S}_{r\sqrt{2}}} G_{p+q,0}(X)D_{p+q,0}x f^\C(X)
=\omega_{p+q}f(0).
\end{multline*}
\end{proof}

\section{Second Cauchy's Integral Formula}  \label{6}

In this section we state and prove another integral formula for
$(p,q)$-monogenic functions.
Unlike the previous version, this one that does not change the contour of
integration. Instead, we insert a purely imaginary term $i\epsilon\|X-X_0\|^2$
into the denominator of the reproducing kernel $G_{p,q}(X-X_0)$, then take
limit as $\epsilon \to 0^+$.

\begin{theorem}  \label{secondintegralformula}
Let $p+q \geq 2$.
Let $U\subset \R^{p+q+1}$ be a bounded open region with smooth boundary
$\partial U$, and let $M_{p,q}$ be a left ${\mathcal A}_{p,q}$-module.
Suppose that $f$ is an $M_{p,q}$-valued function that is
$(p,q)$-left-monogenic in a neighborhood of the closure $\overline{U}$.
Then, for any $X_0 \in \R^{p+q+1}$ such that $\partial U$ intersects the cone
$\{ X \in \R^{p+q+1} ;\: N(X-X_0)=0\}$ transversally, we have:
\begin{equation*}
\lim_{\epsilon \to 0^+} \int_{\partial U} G_{p,q,\epsilon}(X-X_0) D_{p,q}x f(X)
= \begin{cases} \omega_{p+q}f(X_0) & \text{if } X_0\in U, \\
  0 & \text{if } X_0 \notin \overline{U}, \end{cases}
\end{equation*}
where $G_{p,q,\epsilon}$ is the modified Green's function defined by
\begin{multline*}
G_{p,q,\epsilon}(X)
= \frac{X^+}{\bigl( N(X)+i\epsilon\norm{X}^2 \bigr)^{\frac{p+q+1}2}}  \\
= \frac{x_0-\sum_{j=1}^p x_j e_j-\sum_{j=p+1}^{p+q} \tilde{x}_j \tilde{e}_j}
{\bigl( \sum_{j=0}^p x_j^2-\sum_{j=p+1}^{p+q} \tilde{x}_j^2
+ i\epsilon(\sum_{j=0}^p x_j^2+\sum_{j=p+1}^{p+q} \tilde{x}_j^2)
\bigr)^{\frac{p+q+1}2}}.
\end{multline*}

Similarly, if $\tilde{M}_{p,q}$ is a right ${\mathcal A}_{p,q}$-module and
$g$ is an $\tilde{M}_{p,q}$-valued function that is
$(p,q)$-right-monogenic in a neighborhood of $\overline{U}$, then,
for any $X_0 \in \R^{p+q+1}$ such that $\partial U$ intersects the cone
$\{ X \in \R^{p+q+1} ;\: N(X-X_0)=0\}$ transversally, we have:
\begin{equation*}
\lim_{\epsilon \to 0^+} \int_{\partial U} g(X) D_{p,q}x G_{p,q,\epsilon}(X-X_0)
=\begin{cases} \omega_{p+q}g(X_0) & \text{if } X_0\in U, \\
0 & \text{if } X_0\notin \overline{U}. \end{cases}
\end{equation*}
\end{theorem}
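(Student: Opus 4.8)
The plan is to reduce Theorem~\ref{secondintegralformula} to the first Cauchy formula (Theorem~\ref{firstintegralformula}) by comparing the two regularizations of the singular kernel. The starting observation is that $G_{p,q,\epsilon}(X-X_0)$ is, up to a harmless rescaling, the pullback of the nonsingular kernel $G_{p+q}(Z-X_0)$ under a deformation closely related to $h_{\epsilon,X_0}$: indeed, by Corollary~\ref{nofxhcorollary} we have $N(h_\epsilon(X-X_0)) = (1-\epsilon^2)N(X-X_0) + 2i\epsilon\norm{X-X_0}^2$, which matches the denominator $N(X-X_0) + i\epsilon\norm{X-X_0}^2$ appearing in $G_{p,q,\epsilon}$ after absorbing the factor $(1-\epsilon^2)$ into $\epsilon$ and a scalar constant. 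So the first step is to make this identification precise: write $G_{p,q,\epsilon}(X-X_0)D_{p,q}x$ in terms of $(h_{\epsilon,X_0})^*\bigl(G_{p+q}(Z-X_0)D_{p+q}z\bigr)$ restricted to $\partial U\subset\R^{p+q+1}$, tracking the constant that arises. This requires knowing how $D_{p,q}x$ and $D_{p+q}z$ transform under $h_\epsilon$, which is a linear-algebra computation using the explicit formulas \eqref{Dx-explicit} and \eqref{Dzsumdef} together with the fact that $h_\epsilon$ is diagonal.

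Second, I would observe that $\int_{\partial U} G_{p,q,\epsilon}(X-X_0)D_{p,q}x\, f(X)$ differs from $\int_{(h_{\epsilon,X_0})_*(\partial U)} G_{p+q}(Z-X_0)D_{p+q}z\, f^\C(Z)$ only in that the latter genuinely uses the holomorphic extension $f^\C$ and integrates over the deformed contour, whereas the former keeps the undeformed contour $\partial U$ but pulls the kernel back. Since pulling back a form under $h_{\epsilon,X_0}$ and integrating over $\partial U$ is the same (by the change-of-variables/pushforward identity) as integrating the original form over $(h_{\epsilon,X_0})_*(\partial U)$, the only discrepancy is $f$ versus $f^\C$. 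But $f$ is merely $(p,q)$-monogenic on a real neighborhood and need not extend holomorphically, so one cannot invoke Theorem~\ref{firstintegralformula} directly. This is the crux. The resolution is a two-step limiting argument: first establish the formula up to an unknown multiplicative constant $C = C_{p,q}$ (depending only on $p,q$, as flagged by the reference to \eqref{2nd-formula-constant} and Lemma~\ref{C-lemma} in the introduction), and then pin down $C$ by testing against a single function that does extend holomorphically.

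For the up-to-a-constant step, the approach is local. Since $\partial U$ meets the cone $\{N(X-X_0)=0\}$ transversally, the singular set of the integrand on $\partial U$ is a codimension-one submanifold, and the limit $\epsilon\to0^+$ is controlled by the behavior of $G_{p,q,\epsilon}$ near that set. Away from the cone, $G_{p,q,\epsilon}\to G_{p,q}$ uniformly and the integral converges by dominated convergence. Near the cone, after a partition of unity and local coordinates straightening the cone, the contribution is a boundary-value-of-a-distribution computation: $\bigl(N + i\epsilon\norm{\cdot}^2\bigr)^{-(p+q+1)/2}$ converges as $\epsilon\to 0^+$ to a distribution of the form $(N+i0)^{-(p+q+1)/2}$, and the resulting integral produces a universal constant times $f(X_0)$ (when $X_0\in U$) or $0$ (when $X_0\notin\overline{U}$). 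The case $X_0\notin\overline U$ is easier since then $N(X-X_0)$ is bounded away from $0$ on $\partial U$ for $X_0$ off the cone, or else one deforms as in the first formula; in any event the limit form is closed and exact so the integral vanishes. The upshot is
\begin{equation}  \label{2nd-formula-constant}
\lim_{\epsilon \to 0^+} \int_{\partial U} G_{p,q,\epsilon}(X-X_0) D_{p,q}x\, f(X)
= C_{p,q}\, f(X_0) \qquad \text{when } X_0 \in U,
\end{equation}
with $C_{p,q}$ independent of $U$, $X_0$ and $f$.

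Finally, to identify $C_{p,q} = \omega_{p+q}$, I would apply both \eqref{2nd-formula-constant} and the already-established first formula to the \emph{same} concrete situation: take $f\equiv e_0$ the constant function (which is trivially $(p,q)$-monogenic and extends holomorphically to the constant function $e_0$ on all of $\C^{p+q+1}$), take $U$ a small Euclidean ball around $X_0$, and compare. By the pullback identification from the first step, $\int_{\partial U} G_{p,q,\epsilon}(X-X_0)D_{p,q}x\, e_0$ equals (up to the tracked constant) $\int_{(h_{\epsilon,X_0})_*(\partial U)} G_{p+q}(Z-X_0)D_{p+q}z\, e_0$, which by Theorem~\ref{firstintegralformula} equals $\omega_{p+q}e_0$ for all small $\epsilon>0$; letting $\epsilon\to0^+$ forces $C_{p,q}=\omega_{p+q}$. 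This is exactly the content of Lemma~\ref{C-lemma}. The right-monogenic case is identical with the order of factors reversed. I expect the main obstacle to be the local distributional analysis near the cone in the up-to-a-constant step --- making rigorous that the transversal intersection hypothesis suffices for the $\epsilon\to0^+$ limit to exist and be given by a clean boundary-value distribution, uniformly enough to commute the limit with the integral; once that is in hand, the constant is fixed for free by the comparison with the first formula.
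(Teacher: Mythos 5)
Your two-step strategy---prove the formula up to an unknown constant $C_{p,q}$, then pin that constant down by testing against a function (the constant function $e_0$) that does extend holomorphically and comparing with Theorem~\ref{firstintegralformula}---is exactly the structure of the paper's proof (see \eqref{2nd-formula-constant} and Lemma~\ref{C-lemma}), and the pin-down step you describe is essentially correct. However, the ``up to a constant'' step as you sketch it has a real gap. You propose to analyze the distributional limit of $(N+i\epsilon\|\cdot\|^2)^{-(p+q+1)/2}$ directly on $\partial U$, near the locus where the cone meets $\partial U$. But when $X_0\in U$, the point $X_0$ is \emph{not} on $\partial U$, so such a local analysis near the cone singularities on $\partial U$ can (at most) show that the limit of the integral exists; it cannot, by itself, explain why the limit is a multiple of $f(X_0)$. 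The reproducing property comes from the pole of the kernel at $X_0\in U$, and to access it you must invoke Stokes' theorem, splitting $\int_{\partial U}$ into an integral over a small sphere $S_r$ around $X_0$ plus a ``bulk'' integral over $U\setminus B_r$ --- this is \eqref{twointegrals} in the paper. The small-sphere piece produces $C_{p,q}f(X_0)$ in the double limit (Lemma~\ref{lastlemma}, proved via hybrid spherical coordinates and Lemmas~\ref{firstintegralequation}, \ref{secondintegralequation}), while the bulk piece vanishes as $\epsilon\to 0^+$ --- and this vanishing is not automatic, because $G_{p,q,\epsilon}$ is \emph{not} $(p,q)$-monogenic. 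The paper computes $\nabla^+_{p,q}G_{p,q,\epsilon}$ explicitly (Lemma~\ref{diracofmodifiedgreensfunction}) and finds it carries an overall factor of $\epsilon$, which combined with the distributional estimate of Lemma~\ref{distributionlemma} gives the vanishing in Lemma~\ref{exteriorderivativetozero}. None of this appears in your sketch.

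Two smaller inaccuracies worth flagging: (1) your claim that for $X_0\notin\overline U$ one has ``$N(X-X_0)$ bounded away from $0$ on $\partial U$'' is false in the indefinite case---the cone through $X_0$ generally still cuts $\partial U$ even when $X_0$ is outside $\overline U$; the hypothesis only gives transversality, and the argument still requires the full distributional analysis (just without a small-sphere term). (2) The kernel identification via $h_\epsilon$ in your first step, while a good intuition, cannot close the loop for a general $f$ because the pullback of the form would require evaluating $f$ off the real subspace; the paper only uses this pullback computation in Lemma~\ref{C-lemma}, where $f\equiv 1$ trivially extends.
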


\begin{remark}
As in the case of the first Cauchy's integral formula
(Theorem \ref{firstintegralformula}),
if we let $\epsilon \to 0^-$, we need to insert a factor of $(-1)^q$
into the right hand sides of the above formulas. Thus,
\begin{equation*}
\lim_{\epsilon \to 0^-} \int_{\partial U} G_{p,q,\epsilon}(X-X_0) D_{p,q}x f(X)
= \begin{cases} (-1)^q \omega_{p+q}f(X_0) & \text{if } X_0\in U, \\
  0 & \text{if } X_0 \notin \overline{U}. \end{cases}
\end{equation*}
And similarly for the other formula.
\end{remark}

\begin{proof}
We change coordinates to hybrid spherical coordinates given by
\begin{equation*}
\Phi(\rho,\phi_1,\dots,\phi_p,\theta,\psi_1,\dots,\psi_{q-1})
=(x_0,x_1,\dots,x_p,\tilde{x}_{p+1},\dots,\tilde{x}_{p+q}),
\end{equation*}
\begin{equation}  \label{sphercoors}
\begin{array}{lcc}
x_0=\rho\cos\theta \cos\phi_1 & \qquad & \\
x_1=\rho\cos\theta\sin\phi_1\cos\phi_2 & & \\
x_2=\rho\cos\theta\sin\phi_1\sin\phi_2\cos\phi_3 & & \\
\vdots & & \\
x_{p-1}=\rho\cos\theta\sin\phi_1\dots \cos \phi_p & &
0\leq \phi_1,\phi_2,\dots,\phi_{p-1}\leq \pi, \\
x_{p}=\rho\cos\theta\sin\phi_1\dots \sin \phi_p & &
0\leq \psi_1,\psi_2,\dots,\psi_{q-2}\leq \pi, \\
\tilde{x}_{p+1}=\rho\sin\theta \cos\psi_1 & & 0\leq \phi_p,\psi_{q-1}\leq 2\pi, \\
\tilde{x}_{p+2}=\rho\sin\theta \sin\psi_1\cos\psi_2 & &
0 \leq \rho, \quad 0\leq \theta\leq \frac{\pi}{2}. \\
\vdots & & \\
\tilde{x}_{p+q-1}=\rho\sin\theta \sin\psi_1\dots \cos\psi_{q-1} & & \\
\tilde{x}_{p+q}=\rho\sin\theta \sin\psi_1\dots \sin\psi_{q-1} & &
\end{array}
\end{equation}
These are essentially the spherical coordinates for the
$(x_0,\dots,x_p)$-subspace combined with the spherical coordinates
for the $(\tilde{x}_{p+1},\dots,\tilde{x}_{p+q})$-subspace.
In these coordinates, we have:
\begin{align*}
N(X) &= \sum_{j=0}^{p} x_j^2 - \sum_{j=p+1}^{p+q} \tilde{x}_j^2
= \rho^2\cos^2\theta - \rho^2\sin^2\theta = \rho^2\cos(2\theta)
\qquad \text{and } \\
\norm{X}^2 &= \sum_{j=0}^{p} x_j^2 + \sum_{j=p+1}^{p+q} \tilde{x}_j^2
= \rho^2\cos^2\theta + \rho^2\sin^2\theta =\rho^2.
\end{align*}
We thus have that the null cone $\mathcal{N}_{p,q}$ is the set of
$X$ in $\R^{p+q+1}$ such that $\theta=\frac{\pi}{4}$,
and we structure our argument in the vein of \cite{L}.
We use the symmetry of the change of basis matrix with respect to
$p$ and $q-1$ to calculate the determinant by means of block matrices.
Let $S_{n,\alpha}$ be the Jacobian matrix corresponding to the transformation
into the standard $(n+1)$-dimensional spherical coordinates,
$(\rho,\alpha_1,\dots,\alpha_n)\to (x_0,\dots,x_n)\in \R^{n+1}$,
\begin{equation*}
\det(S_{n,\alpha}) =
\rho^n\sin^{n-1} \alpha_1\sin^{n-2}\alpha_2\dots \sin \alpha_{n-1}.
\end{equation*}

\begin{lemma}  \label{sphericaljacobian}
The determinant of the Jacobian matrix of the change of coordinates
\eqref{sphercoors} is given by
\begin{equation*}
\det(D\Phi)=\rho\cos^p\theta\sin^{q-1}\theta \det(S_{p,\phi})\det(S_{q-1,\psi}).
\end{equation*}
\end{lemma}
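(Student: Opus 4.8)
The strategy is to organize the coordinate map $\Phi$ as a ``product'' of two standard spherical coordinate systems---one on the $(x_0,\dots,x_p)$-subspace $\R^{p+1}$ with radial variable $\rho\cos\theta$, and one on the $(\tilde x_{p+1},\dots,\tilde x_{p+q})$-subspace $\R^q$ with radial variable $\rho\sin\theta$---glued together by the single extra angle $\theta$ and a shared radius $\rho$. Concretely, write $u = \rho\cos\theta$ and $v = \rho\sin\theta$; then the first block of equations in \eqref{sphercoors} is exactly the standard spherical map $(u,\phi_1,\dots,\phi_p)\mapsto(x_0,\dots,x_p)$ and the second block is the standard spherical map $(v,\psi_1,\dots,\psi_{q-1})\mapsto(\tilde x_{p+1},\dots,\tilde x_{p+q})$. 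So $\Phi$ factors as the composition of the map $(\rho,\phi_1,\dots,\phi_p,\theta,\psi_1,\dots,\psi_{q-1})\mapsto(u,\phi_1,\dots,\phi_p,v,\psi_1,\dots,\psi_{q-1})$ followed by the block-diagonal pair of standard spherical maps. By the chain rule, $\det(D\Phi)$ is the product of the two Jacobian determinants.

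\emph{Step 1.} Compute the Jacobian of the ``polar in the $(\rho,\theta)$ plane'' part. Holding all the $\phi_i,\psi_j$ fixed, the nontrivial $2\times 2$ block sending $(\rho,\theta)\mapsto(u,v)=(\rho\cos\theta,\rho\sin\theta)$ has determinant $\rho$; the remaining variables pass through untouched, so this intermediate map has Jacobian determinant $\rho$.

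\emph{Step 2.} Apply the formula for the standard spherical Jacobian to each block. The first block, viewed as a function of its own radial variable $u$ and angles $\phi$, contributes $\det(S_{p,\phi})$ with $\rho$ replaced by $u=\rho\cos\theta$; since $\det(S_{p,\phi})=\rho^p\sin^{p-1}\phi_1\cdots\sin\phi_{p-1}$ scales like the $p$-th power of the radius, substituting $u=\rho\cos\theta$ pulls out a factor $\cos^p\theta$. Similarly the second block contributes $\det(S_{q-1,\psi})$ evaluated with radial variable $v=\rho\sin\theta$, pulling out a factor $\sin^{q-1}\theta$. Multiplying these together with the factor $\rho$ from Step 1 gives
\[
\det(D\Phi) = \rho\cdot\cos^p\theta\,\sin^{q-1}\theta\,\det(S_{p,\phi})\,\det(S_{q-1,\psi}),
\]
where now $\det(S_{p,\phi})$ and $\det(S_{q-1,\psi})$ are understood with radial variable $\rho$, as stated. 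The cleanest way to present Steps 1--2 rigorously is to exhibit $D\Phi$ (after a harmless reordering of rows and columns) in block-triangular form: the upper-left block is $D$ of the first standard spherical map in the variables $(u,\phi_1,\dots,\phi_p)$, the lower-right block is $D$ of the second in $(v,\psi_1,\dots,\psi_{q-1})$, and the coupling between the two blocks only enters through $\partial u/\partial\theta$ and $\partial v/\partial\theta$, which can be absorbed by the elementary $(\rho,\theta)\mapsto(u,v)$ substitution; then $\det(D\Phi)$ is literally the product of three scalar determinants.

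\emph{Main obstacle.} The only real subtlety is bookkeeping: making precise the claim that $\Phi$ factors as an honest composition of two block-diagonal standard spherical maps with a shared-radius coupling, so that the chain rule applies cleanly, rather than just asserting the pattern. Once the factorization $\Phi = (\text{block-diagonal spherical pair})\circ(\text{shared-radius coupling})$ is set up, the determinant computation is immediate from the known formula $\det(S_{n,\alpha}) = \rho^n\sin^{n-1}\alpha_1\cdots\sin\alpha_{n-1}$ and the elementary computation $\det\!\big(\partial(u,v)/\partial(\rho,\theta)\big)=\rho$. I would also remark that the edge cases $p=1$ or $q=1$ (where one of the blocks degenerates, and $\det(S_{0,\psi})$ should be interpreted as $1$) are consistent with the stated formula.
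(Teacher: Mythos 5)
Your proof is correct, and it takes a genuinely different route from the paper's. You factor $\Phi$ as a composition $\Phi = \Psi_2\circ\Psi_1$, where $\Psi_1$ is the shared-radius coupling $(\rho,\theta)\mapsto(u,v)=(\rho\cos\theta,\rho\sin\theta)$ (the other angles passing through untouched) and $\Psi_2$ is the block-diagonal pair of standard spherical maps in $(u,\phi)$ and $(v,\psi)$; the chain rule then gives $\det(D\Phi)=\det(D\Psi_2)\cdot\det(D\Psi_1)$, with $\det(D\Psi_1)=\rho$ from the polar $2\times 2$ block and $\det(D\Psi_2)$ literally block-diagonal, yielding $\cos^p\theta\,\det(S_{p,\phi})\cdot\sin^{q-1}\theta\,\det(S_{q-1,\psi})$ after substituting $u,v$ in terms of $\rho,\theta$. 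The paper instead works directly with $D\Phi$ in the original coordinates: it partitions the $(p+q+1)\times(p+q+1)$ Jacobian into blocks $A,B,\Gamma,\Delta$ (where $B$ and $\Gamma$ have only one nonzero column each, coming from $\partial/\partial\theta$ and $\partial/\partial\rho$), applies the Schur complement identity $\det\begin{pmatrix}A&B\\\Gamma&\Delta\end{pmatrix}=\det(A-B\Delta^{-1}\Gamma)\det(\Delta)$, and carries out a rank-one computation to show $A-B\Delta^{-1}\Gamma$ is $A$ with its first column scaled by $1/\cos^2\theta$. Your composition/chain-rule argument is more conceptual and avoids the Schur complement bookkeeping; the paper's approach avoids having to justify the factorization-and-reordering step and keeps everything in one matrix. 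Both are rigorous; when writing yours up you should spell out that the symmetric permutation used to place $D\Psi_1$ in block form leaves the determinant unchanged, and that $D\Psi_2$ is block diagonal in the ordering $(u,\phi_1,\dots,\phi_p,v,\psi_1,\dots,\psi_{q-1})$ because the $x$'s depend only on $(u,\phi)$ and the $\tilde x$'s only on $(v,\psi)$.
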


\begin{proof}
We spell out the Jacobian matrix of the change of coordinates \eqref{sphercoors}
and divide it into blocks as follows:
\begin{equation*}
D\Phi=\begin{pmatrix}
\pdv{x_0}{\rho} & \pdv{x_0}{\phi_1}&\pdv{x_0}{\phi_2}&\dots &
\pdv{x_0}{\phi_p}&\vline&\pdv{x_0}{\theta}& \pdv{x_0}{\psi_1}& \dots &
\pdv{x_0}{\psi_{q-1}}\\
\pdv{x_1}{\rho} & \pdv{x_1}{\phi_1}&\pdv{x_1}{\phi_2}&\dots &
\pdv{x_1}{\phi_p}&\vline&\pdv{x_1}{\theta}& \pdv{x_1}{\psi_1}& \dots &
\pdv{x_1}{\psi_{q-1}}\\
\vdots & \vdots &\vdots&\ddots &\vdots&\vline&\vdots& \vdots& \ddots & \vdots\\
\pdv{x_p}{\rho} & \pdv{x_p}{\phi_1}&\pdv{x_p}{\phi_2}&\dots &
\pdv{x_p}{\phi_p}&\vline&\pdv{x_p}{\theta}& \pdv{x_p}{\psi_1}& \dots &
\pdv{x_p}{\psi_{q-1}}\\
\\
\hline
\\
\pdv{\tilde{x}_{p+1}}{\rho} & \pdv{\tilde{x}_{p+1}}{\phi_1}&
\pdv{\tilde{x}_{p+1}}{\phi_2}&\dots &\pdv{\tilde{x}_{p+1}}{\phi_p}&\vline&
\pdv{\tilde{x}_{p+1}}{\theta}& \pdv{\tilde{x}_{p+1}}{\psi_1}& \dots &
\pdv{\tilde{x}_{p+1}}{\psi_{q-1}}\\
\vdots & \vdots &\vdots&\ddots &\vdots&\vline&\vdots& \vdots& \ddots & \vdots\\
\pdv{\tilde{x}_{p+q}}{\rho} & \pdv{\tilde{x}_{p+q}}{\phi_1}&
\pdv{\tilde{x}_{p+q}}{\phi_2}&\dots &\pdv{\tilde{x}_{p+q}}{\phi_p}&\vline&
\pdv{\tilde{x}_{p+q}}{\theta}& \pdv{\tilde{x}_{p+q}}{\psi_1}& \dots &
\pdv{\tilde{x}_{p+q}}{\psi_{q-1}}
\end{pmatrix}
= \begin{pmatrix} A & B\\ \Gamma & \Delta \end{pmatrix}.
\end{equation*}
We make the following observations about the blocks $A$, $B$, $\Gamma$
and $\Delta$.
First, $A$ is the $(p+1)\times (p+1)$ matrix that is obtained from
$S_{p,\phi}$ by multiplying each entry by $\cos\theta$;
we denote the first column of $A$ by $\vec{a}$.
Next, $B$ is the $(p+1)\times q$ matrix whose first column is
$\vec{b}$, described below, and all other columns are zero.
Similarly, $\Gamma$ is the $q\times (p+1)$ matrix whose first column is
$\vec{c}$, described below, and the rest of the columns are zero.
Finally, $\Delta$ the $q \times q$ matrix obtained from $S_{q-1,\psi}$
by multiplying the first column by $\rho\cos\theta$ and multiplying the
remaining columns by $\sin\theta$; we denote the first column of
$\Delta$ by $\vec{d}$.
\begin{equation*}
A = \cos\theta \cdot S_{p,\phi}, \qquad
\Delta = S_{q-1,\psi} \cdot \begin{pmatrix} \rho\cos\theta & 0 & 0 &\dots \\
0 & \sin\theta & 0 & \dots \\ 0 & 0 & \sin\theta & \dots \\
0 & 0 & 0 & \ddots\end{pmatrix},
\end{equation*}
\begin{equation*}
\vec{a}=\cos\theta \cdot \begin{pmatrix}\cos\phi_1\\
\sin\phi_1\cos\phi_2 \\ \vdots \\ \sin\phi_1\sin\phi_2\dots \cos\phi_p \\
\sin \phi_1\sin\phi_2\dots \sin\phi_p\end{pmatrix}, \qquad
\vec{b}= -\frac{\rho\sin\theta}{\cos\theta} \cdot \vec{a},
\end{equation*}
\begin{equation*}
\vec{c}=\sin\theta \cdot
\begin{pmatrix}\cos\psi_1\\ \sin\psi_1\cos\psi_2 \\ \vdots \\
\sin\psi_1\sin\psi_2\dots \cos\psi_{q-1} \\
\sin \psi_1\sin\psi_2\dots \sin\psi_{q-1}\end{pmatrix}, \qquad
\vec{d}= \frac{\rho\cos\theta}{\sin\theta} \cdot \vec{c}.
\end{equation*}

\begin{lemma}
  If $A$ is an $m\times m$ matrix, $B$ is an $m\times n$ matrix,
  $\Gamma$ is an $n\times m$ matrix, and $\Delta$ is an invertible
  $n\times n$ matrix, then
  $\det\begin{pmatrix} A & B\\ \Gamma & \Delta \end{pmatrix}
  =\det(A-B\Delta^{-1}\Gamma)\det(\Delta)$.
\end{lemma}

\begin{proof}
  This standard result on block matrices is proved by factoring into
  triangular matrices:
\begin{equation*}
  \begin{pmatrix} A & B\\ \Gamma & \Delta \end{pmatrix}
  =\begin{pmatrix} A-B\Delta^{-1}\Gamma & B\Delta^{-1}\\
  0_{n,m} & I_{n,n}\end{pmatrix}
  \begin{pmatrix} I_{m,m} & 0_{m,n}\\ \Gamma & \Delta\end{pmatrix}.
\end{equation*}
\end{proof}

Using this lemma,
\begin{equation*}
\det(D\Phi)=\det\begin{pmatrix} A & B\\ \Gamma & \Delta\end{pmatrix}
=\det(A-B\Delta^{-1}\Gamma)\det(\Delta).
\end{equation*}
We next calculate $A-B\Delta^{-1}\Gamma$.
We first compute $B^{-1}\Delta \Gamma$, labelling the $j$th row of the
matrix $\Delta^{-1}$ by $\vec{r_j}$ (horizontal vector),
\begin{equation*}
B(\Delta^{-1}\Gamma) =B\left(\begin{pmatrix}
\vec{r}_1\\ \vec{r}_2\\ \vdots \\ \vec{r}_{q}\end{pmatrix}
\begin{pmatrix} \vec{c} & \vec{0} &\dots \end{pmatrix}\right)
=\begin{pmatrix} \vec{b}& \vec{0} &\dots\end{pmatrix} \begin{pmatrix} \vec{c}
\cdot \vec{r}_1 & 0 &\dots \\\vec{c}\cdot \vec{r}_2 & 0 &\dots \\ \vdots \\
\vec{c}\cdot \vec{r}_q & 0 &\dots \end{pmatrix}
=\begin{pmatrix} (\vec{c}\cdot \vec{r}_1)\vec{b}& \vec{0} & \dots\end{pmatrix}.
\end{equation*}
Observe that $\vec{c}$ is the first column of $\Delta$ scaled by
$\frac{\sin\theta}{\rho\cos\theta}$, and the so the scalar product of it and
the first row of $\Delta^{-1}$ is $\frac{\sin\theta}{\rho\cos\theta}$,
as $1$ is the upper left entry of $\Delta\Delta^{-1}$.
Thus, we have that $B\Delta^{-1}\Gamma$ is a matrix with first column
$\frac{\sin\theta}{\rho\cos\theta} \vec{b}$, and the rest of columns zero.
Since $\vec{b}= -\frac{\rho\sin\theta}{\cos\theta} \vec{a}$,
we have that $A-B\Delta^{-1}\Gamma$ is matrix $A$ with the first column
multiplied by
$1+\frac{\sin^2\theta}{\cos^2\theta}=\frac{1}{\cos^2\theta}$.
Using the multilinearity of the determinant, and factoring out the
$\cos\theta$ and $\sin\theta$ from the columns of $A$ and $\Delta$,
\begin{multline*}
\det(D\Phi)=\det(A-B\Delta^{-1}\Gamma)\det(\Delta)
=\frac1{\cos^2\theta} \det(A)\det(\Delta)  \\
=\frac1{\cos^2\theta}(\cos^{p+1}\theta \cdot \rho\cos\theta \cdot
\sin^{q-1}\theta) \det(S_{p,\phi})\det(S_{q-1,\psi}) \\
=\rho\cos^{p}\theta\sin^{q-1}\theta\det(S_{p,\phi})\det(S_{q-1,\psi}).
\end{multline*}
\end{proof}

We return to the proof of Theorem \ref{secondintegralformula}.
The case $X_0 \notin \overline{U}$ is easier, so we assume $X_0 \in U$.
By translation we can also assume that $X_0=0$.
This lemma is proved by direct computation.

\begin{lemma}  \label{diracofmodifiedgreensfunction}
We have:
\begin{align*}
\nabla^+_{p,q} G_{p,q,\epsilon} &= i\epsilon (p+q+1) \frac{\norm{X}^2-\bar{X}X^+}
{\bigl( N(X)+i\epsilon \norm{X}^2 \bigr)^{\frac{p+q+3}2}},  \\
G_{p,q,\epsilon}\nabla^+_{p,q} &= i\epsilon (p+q+1) \frac{\norm{X}^2-X^+\bar{X}}
{\bigl( N(X)+i\epsilon \norm{X}^2 \bigr)^{\frac{p+q+3}2}}.
\end{align*}
\end{lemma}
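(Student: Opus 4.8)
The plan is to verify both identities by a direct application of the Leibniz rule, exactly as the sentence preceding the lemma anticipates. Write $G_{p,q,\epsilon}(X) = X^+\cdot g(X)$, where $X\mapsto X^+$ is the linear $\mathcal{A}_{p,q}$-valued function and $g(X)=\bigl(N(X)+i\epsilon\norm{X}^2\bigr)^{-\frac{p+q+1}{2}}$ is scalar-valued. For the left operator I would split $\nabla^+_{p,q}(X^+g)$ into the group of terms in which each partial derivative lands on $X^+$ and the group in which it lands on $g$; the right operator $G_{p,q,\epsilon}\nabla^+_{p,q}$ is handled by the mirror-image computation with all Clifford generators written on the other side of $X^+$.

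The first group of terms is $\bigl(\nabla^+_{p,q}X^+\bigr)g$. Since $\partial_{x_0}X^+=e_0$, $\partial_{x_j}X^+=-e_j$ for $1\le j\le p$ and $\partial_{\tilde x_j}X^+=-\tilde e_j$ for $p+1\le j\le p+q$, the relations \eqref{basicrelations1} ($e_0^2=e_0$, $e_j^2=-e_0$, $\tilde e_j^2=e_0$) give $\nabla^+_{p,q}X^+=(p+q+1)e_0$, so this group contributes $(p+q+1)g$. For the second group I would apply the chain rule using $\partial_{x_j}\bigl(N(X)+i\epsilon\norm{X}^2\bigr)=2(1+i\epsilon)x_j$ for $0\le j\le p$ and $\partial_{\tilde x_j}\bigl(N(X)+i\epsilon\norm{X}^2\bigr)=-2(1-i\epsilon)\tilde x_j$ for $p+1\le j\le p+q$. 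After pulling out the common factor $-(p+q+1)\bigl(N(X)+i\epsilon\norm{X}^2\bigr)^{-\frac{p+q+3}{2}}$, the surviving Clifford sum is $\bigl[(1+i\epsilon)\sum_{j=0}^{p}x_je_j+(1-i\epsilon)\sum_{j=p+1}^{p+q}\tilde x_j\tilde e_j\bigr]X^+$, which I would reorganize as $(X+i\epsilon\bar X)X^+=XX^+ + i\epsilon\,\bar X X^+$, using that $\bar X$ (see \eqref{compl-conj-Rn}) flips the signs of precisely the $q$ timelike coordinates and that $N(X)=XX^+$.

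The last step is the cancellation: inside $N(X)+i\epsilon\,\bar X X^+$ substitute $N(X)=\bigl(N(X)+i\epsilon\norm{X}^2\bigr)-i\epsilon\norm{X}^2$; the $\bigl(N(X)+i\epsilon\norm{X}^2\bigr)$ piece absorbs one power of the denominator and produces $-(p+q+1)g$, which cancels the $(p+q+1)g$ from the first group, leaving exactly
\[
\nabla^+_{p,q}G_{p,q,\epsilon}=i\epsilon(p+q+1)\,\frac{\norm{X}^2-\bar X X^+}{\bigl(N(X)+i\epsilon\norm{X}^2\bigr)^{\frac{p+q+3}{2}}}.
\]
Repeating the computation with the generators on the right turns $XX^+$ and $\bar X X^+$ into $X^+X$ and $X^+\bar X$, which yields the second formula. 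I do not anticipate any conceptual difficulty here; the only thing demanding care is the noncommutative bookkeeping — keeping each generator on the correct side of $X^+$, confirming that the coefficients $1+i\epsilon$ on the $p$ spacelike generators and $1-i\epsilon$ on the $q$ timelike generators recombine into $X\pm i\epsilon\bar X$, and tracking the sign discrepancy between $e_j^2=-e_0$ and $\tilde e_j^2=e_0$ inside the Dirac operator.
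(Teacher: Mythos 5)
Your computation is correct and is exactly what the paper means when it states this lemma ``is proved by direct computation'': a Leibniz split of $\nabla^+_{p,q}(X^+g)$ into the $(p+q+1)g$ term and the term where the derivatives hit $g$, followed by the algebraic recombination $(1+i\epsilon)\sum x_je_j+(1-i\epsilon)\sum\tilde x_j\tilde e_j=X+i\epsilon\bar X$ and the cancellation via $N(X)=\bigl(N(X)+i\epsilon\|X\|^2\bigr)-i\epsilon\|X\|^2$. The bookkeeping on both sides of $X^+$ checks out, so this is essentially the paper's (unwritten) argument.
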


By Lemma \ref{productrulex},
\begin{equation*}
d(G_{p,q,\epsilon}\cdot D_{p,q}x \cdot f)
=(G_{p,q,\epsilon}\nabla^+_{p,q})fdV_{p,q}
+G_{p,q,\epsilon}(\nabla^+_{p,q} f) dV_{p,q}=(G_{p,q,\epsilon}\nabla^+_{p,q}) fdV_{p,q},
\end{equation*}
since $f$ is $(p,q)$-left-monogenic.
Then by Stokes' theorem we have
\begin{multline}  \label{twointegrals}
\int_{\partial U}G_{p,q,\epsilon}(X)\cdot D_{p,q}x\cdot f(X)  \\
= \int_{U\setminus B_r}\frac{i\epsilon (p+q+1)(\norm{X}^2-X^{+}\bar{X})}
{\bigl( N(X)+i\epsilon \norm{X}^2 \bigr)^{\frac{p+q+3}2}}f(X)dV_{p,q}
+ \int_{S_r}G_{p,q,\epsilon}(X)\cdot D_{p,q}x\cdot f(X),
\end{multline}
where $B_r = \{ X \in \R^{p+q+1}; \norm{X} \leq r \}$ is a ball of sufficiently
small radius $r$ centered at the origin and $S_r$ is its boundary sphere.

Next, we establish (in order) analogues of Lemma 17, Lemma 18, and Lemma 20
of \cite{L}.
Recall that we have selected a branch of $z^{1/2}$ with values in the right
half-plane, as discussed after Definition \ref{defthatprecedessquareroot}.

\begin{lemma}  \label{distributionlemma}
Fix a $\theta_0\in (0,\frac{\pi}{4})$, and let $p,q$ be non-negative integers,
then we have two distributions which send a test function $g(\theta)$ into
the limits
\begin{equation*}
\lim_{\epsilon\to 0^+} \int_{\frac{\pi}{4}-\theta_0}^{\frac{\pi}{4}+\theta_0}
\frac{g(\theta)d\theta}{\bigl( \cos(2\theta)+i\epsilon \bigr)^{\frac{p+q+3}2}}
\qquad \text{and} \qquad
\lim_{\epsilon\to 0^-} \int_{\frac{\pi}{4}-\theta_0}^{\frac{\pi}{4}+\theta_0}
\frac{g(\theta)d\theta}{\bigl( \cos(2\theta)+i\epsilon \bigr)^{\frac{p+q+3}2}}.
\end{equation*}
In other words, the above two limits of integrals define continuous linear
functionals on the topological vector space of smooth functions\footnote{
Note that this vector space has the standard Frech\'et toplogy determined
by the seminorms
$\max_{\theta \in [\frac{\pi}{4}-\theta_0, \frac{\pi}{4}-\theta_0]} g^{(k)}(\theta)$,
where $k=0,1,2,\dots$.}
on the interval $[\frac{\pi}{4}-\theta_0, \frac{\pi}{4}-\theta_0]$.
\end{lemma}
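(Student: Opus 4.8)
The plan is to show that the singularity of $(\cos(2\theta)+i\epsilon)^{-(p+q+3)/2}$ at $\theta=\pi/4$ is integrable after finitely many integrations by parts, uniformly in $\epsilon$ with $\Im\epsilon$ of fixed sign, and that the resulting expression has a limit as $\epsilon\to 0^{\pm}$. First I would change variables near the singular point, setting $u=\cos(2\theta)$ (equivalently $u=\frac{\pi}{4}-\theta$ up to a smooth nonvanishing factor), so that $\cos(2\theta)$ becomes, to leading order, a smooth function vanishing to first order with nonzero derivative at $u=0$; this reduces the problem to understanding $\int \psi(u)(u+i\epsilon)^{-s}\,du$ over a fixed interval around $u=0$, where $s=\frac{p+q+3}{2}$ and $\psi$ is smooth with compact support in the interior, absorbing the Jacobian and the test function $g$. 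Because $\cos$ is not linear, I would either carry the nonlinear substitution carefully (noting $\frac{d}{d\theta}\cos(2\theta)=-2\sin(2\theta)\ne 0$ on a neighborhood of $\pi/4$, so it is a local diffeomorphism) or, more cheaply, write $\cos(2\theta)=(\frac{\pi}{4}-\theta)\,\lambda(\theta)$ with $\lambda$ smooth and $\lambda(\pi/4)\ne 0$, and expand powers of $\lambda$ into the test function.

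The key step is the integration-by-parts identity: for $s\notin\{1\}$ and any $C^1$ function $\psi$ supported in the interior,
\begin{equation*}
\int \psi(u)\,(u+i\epsilon)^{-s}\,du
= \frac{1}{s-1}\int \psi'(u)\,(u+i\epsilon)^{-(s-1)}\,du,
\end{equation*}
which follows from $\frac{d}{du}(u+i\epsilon)^{-(s-1)}=-(s-1)(u+i\epsilon)^{-s}$ and the vanishing of boundary terms. Iterating this $k$ times lowers the exponent to $s-k$; choosing $k$ large enough that $\Re(s-k)<1$ makes $(u+i\epsilon)^{-(s-k)}$ locally integrable with a bound independent of $\epsilon$ for $\epsilon$ in a fixed half-disk (since $|u+i\epsilon|\ge |u|$ when $\epsilon$ is real, or more generally $|u+i\epsilon|^{-\Re(s-k)}$ is dominated by an integrable function). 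Then the dominated convergence theorem gives the limit as $\epsilon\to 0^+$, namely $\frac{1}{(s-1)\cdots(s-k)}\int \psi^{(k)}(u)\,(u+i0)^{-(s-k)}\,du$, and the same for $\epsilon\to 0^-$ with $(u-i0)^{-(s-k)}$; continuity in $g$ in the Fréchet topology is immediate because $\psi^{(k)}$ depends continuously (with the stated seminorms) on the derivatives of $g$ up to order $k$, and the half-integer exponent $s$ is never equal to $1$ so no logarithmic obstruction arises. This also shows the functional is well-defined and continuous, as claimed.

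The main obstacle I anticipate is bookkeeping rather than conceptual: one must verify that after the substitution the remaining factor (Jacobian times powers of $\lambda$ times $g(\theta)$) is genuinely smooth on the closed interval and, crucially, that all boundary terms in the repeated integrations by parts vanish — this is handled by cutting off with a smooth bump that is $\equiv 1$ near $\theta=\pi/4$ and supported inside $(\frac{\pi}{4}-\theta_0,\frac{\pi}{4}+\theta_0)$, and treating the region away from the singularity separately (there the integrand is continuous in $\epsilon$ up to $\epsilon=0$, so convergence is trivial). A secondary point to check is uniformity of the dominating function near $u=0$: since $\Re(s-k)<1$ one has $\int_{|u|<\delta}|u|^{-\Re(s-k)}\,du<\infty$, and $|(u+i\epsilon)^{-(s-k)}|\le |u|^{-\Re(s-k)}e^{\pi|\Im(s-k)|/2}$ for $\Im\epsilon\ge 0$ (and symmetrically for $\Im\epsilon\le 0$), which bounds the integrand uniformly in $\epsilon$ by an $L^1$ function; since here $s$ is real the argument bound $e^{\pi|\Im(s-k)|/2}$ is simply $1$, simplifying matters further. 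With these estimates in place, the two limits exist and define the asserted distributions.
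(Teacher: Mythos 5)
Your approach is essentially the same as the paper's: integrate by parts until the singularity is integrable uniformly in $\epsilon$, then pass to the limit. The paper works directly in the $\theta$ variable, multiplying and dividing by $\sin(2\theta)$ so that $\frac{d}{d\theta}\bigl(\cos(2\theta)+i\epsilon\bigr)^{1-s}$ appears, and organizes the repeated integration by parts as an induction on $n$ where $s=n+\tfrac12$; you change variables to $u=\cos(2\theta)$ (or equivalently peel off a smooth nonvanishing factor) and iterate in $u$. These are the same computation in different coordinates, and your use of a cutoff near the singular point is a harmless alternative to the paper's direct evaluation of boundary terms at the nonsingular endpoints $\tfrac{\pi}{4}\pm\theta_0$.

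There is, however, one genuine gap. You assert that ``the half-integer exponent $s$ is never equal to $1$ so no logarithmic obstruction arises.'' But the lemma is stated for \emph{all} non-negative integers $p,q$, and when $p+q$ is odd the exponent $s=\tfrac{p+q+3}{2}$ is an \emph{integer} (for instance $p+q=1$ gives $s=2$). In that case your iteration passes from exponent $2$ to exponent $1$ with coefficient $\tfrac{1}{s-1}=1$, and then the next step from exponent $1$ downward has the forbidden coefficient $\tfrac{1}{0}$; one is stuck at $(u+i\epsilon)^{-1}$, which is not dominated by an $L^1$ function near $u=0$, so dominated convergence no longer applies directly. To repair this you would need to treat the $s-k=1$ step separately using the logarithmic antiderivative $\log(u+i\epsilon)$, which \emph{is} locally integrable with a uniform $L^1$ bound, giving after one more integration by parts $-\int\psi^{(k+1)}(u)\log(u+i\epsilon)\,du$ with an honest limit as $\epsilon\to 0^\pm$. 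The paper sidesteps this case entirely by observing that for $p+q\equiv 1\pmod 2$ the statement is a consequence of Lemma 17 of \cite{L} and proving directly only the fractional case, which is exactly the case your argument covers.
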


\begin{proof}
In the case of $p+q=1\mod 2$, this is a consequence of Lemma $17$ of \cite{L}.
Otherwise, we modify the proof to fit the fractional case.
We have that $(p+q+3)/2 = n+1/2$ for some non-negative integer $n$,
and we use induction on $n$. For the base case, $n=0$, we integrate by parts,
\begin{multline*}
\int_{\frac{\pi}{4}-\theta_0}^{\frac{\pi}{4}+\theta_0}
\frac{g(\theta)d\theta}{\bigl(\cos(2\theta)+i\epsilon\bigr)^{1/2}}
=\int_{\frac{\pi}{4}-\theta_0}^{\frac{\pi}{4}+\theta_0}
\frac{\sin(2\theta)}{\bigl(\cos(2\theta)+i\epsilon\bigr)^{1/2}}
\frac{g(\theta)d\theta}{\sin(2\theta)}\\
=-\eval{ \bigl(\cos(2\theta)+i\epsilon\bigr)^{1/2}
\frac{g(\theta)}{\sin(2\theta)}}_{\pi/4-\theta_0}^{\pi/4+\theta_0}
+\int_{\frac{\pi}{4}-\theta_0}^{\frac{\pi}{4}+\theta_0}
\bigl(\cos(2\theta)+i\epsilon\bigr)^{\frac12}
\dv{\theta}(\frac{g(\theta)}{\sin(2\theta)})d\theta.
\end{multline*}
As $\bigl(\cos(2\theta)+i\epsilon\bigr)^{1/2}$ is integrable for all values
of $\epsilon$, including $\epsilon=0$, the limits as $\epsilon\to 0^{\pm}$
exist and depend continuously on $g(\theta)$. 
Now we consider the case of $n>0$, in which case we can integrate by parts,
\begin{multline*}
\int_{\frac{\pi}{4}-\theta_0}^{\frac{\pi}{4}+\theta_0}
\frac{g(\theta)d\theta}{\bigl(\cos(2\theta)+i\epsilon\bigr)^{n+1/2}}
=\int_{\frac{\pi}{4}-\theta_0}^{\frac{\pi}{4}+\theta_0}
\frac{\sin(2\theta)}{\bigl(\cos(2\theta)+i\epsilon\bigr)^{n+1/2}}
\frac{g(\theta)d\theta}{\sin(2\theta)}\\
=\eval{\frac{\bigl(\cos(2\theta)+i\epsilon\bigr)^{1/2-n}}{2n-1}
\frac{g(\theta)}{\sin(2\theta)}}_{\pi/4-\theta_0}^{\pi/4+\theta_0}
- \int_{\frac{\pi}{4}-\theta_0}^{\frac{\pi}{4}+\theta_0} \frac{(2n-1)^{-1}}
{\bigl(\cos(2\theta) +i\epsilon\bigr)^{n-\frac12}}
\dv{\theta}(\frac{g(\theta)}{\sin(2\theta)})d\theta,
\end{multline*}
and the result follows by induction on $n$. 
\end{proof}

\begin{lemma}  \label{exteriorderivativetozero}
\begin{equation*}
\lim_{\epsilon \to 0}\int_{U\setminus B_r}
\frac{i\epsilon(p+q+1)(\norm{X}^2-X^+\bar{X})}
{\bigl(N(X)+i\epsilon \norm{X}^2\bigr)^{\frac{p+q+3}2}} f(X) dV_{p,q}=0.
\end{equation*}
\end{lemma}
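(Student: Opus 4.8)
The plan is to change to the hybrid spherical coordinates $\Phi$ of \eqref{sphercoors}, in which the singular set $\{N(X)=0\}$ is exactly the hyperplane $\{\theta=\pi/4\}$, and then to reduce everything to Lemma~\ref{distributionlemma} applied in the $\theta$-variable, with the remaining coordinates playing the role of parameters. Since $X_0=0$ we have $N(X)=\rho^2\cos(2\theta)$ and $\norm{X}^2=\rho^2$, so the denominator equals $\rho^{p+q+3}\bigl(\cos(2\theta)+i\epsilon\bigr)^{(p+q+3)/2}$. The Clifford-valued numerator $\norm{X}^2-X^+\bar X$ is a quadratic expression in the coordinates (a product of two linear expressions, plus $\norm{X}^2=\rho^2$), hence equals $\rho^2\,\omega(\phi,\theta,\psi)$ for some bounded, trigonometric-polynomial, Clifford-valued $\omega$ that does not depend on $\rho$. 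Combining these with the Jacobian $\det(D\Phi)=\rho^{p+q}\cos^p\theta\,\sin^{q-1}\theta\,J(\phi,\psi)$ of Lemma~\ref{sphericaljacobian} (where $J$ collects the pure angular factors), all powers of $\rho$ reduce to a single $\rho^{-1}$, which is bounded on the region of integration since there $\rho\ge r>0$. Thus, up to a nonzero constant, the integral becomes
\begin{equation*}
i\epsilon\,(p+q+1)\int
\frac{W(\rho,\phi,\theta,\psi)}{\bigl(\cos(2\theta)+i\epsilon\bigr)^{(p+q+3)/2}}
\,d\rho\,d\phi\,d\theta\,d\psi,
\end{equation*}
taken over the image region, where $W=\rho^{-1}\,\omega(\phi,\theta,\psi)\cos^p\theta\,\sin^{q-1}\theta\,J(\phi,\psi)\,f\bigl(\Phi(\rho,\phi,\theta,\psi)\bigr)$ is bounded on the relevant compact set and, $f$ being smooth, is smooth in $\theta$ with $\theta$-derivatives bounded uniformly in the other variables.

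With this normal form I would split the region of integration according to whether $|\theta-\pi/4|\ge\theta_0$ or $|\theta-\pi/4|<\theta_0$, for a small fixed $\theta_0\in(0,\pi/4)$. On the first part $|\cos(2\theta)+i\epsilon|$ is bounded below by a positive constant depending only on $\theta_0$, so the integrand is $O(\epsilon)$ pointwise and, the region having finite volume, its contribution is $O(\epsilon)\to0$. On the second part I would integrate in $\theta$ first: for each fixed $(\rho,\phi,\psi)$, after discarding the portion of the $\theta$-slice that stays at distance $\ge\theta_0$ from $\pi/4$ (absorbed into the previous estimate), we are left with an integral $\int\frac{g(\theta)\,d\theta}{(\cos(2\theta)+i\epsilon)^{(p+q+3)/2}}$ with $g(\theta)=W(\rho,\phi,\theta,\psi)$ times a fixed smooth cutoff supported near $\pi/4$. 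By Lemma~\ref{distributionlemma} this converges as $\epsilon\to0^+$, hence stays bounded in $\epsilon$; because the limit distribution is continuous for the Fr\'echet topology and the seminorms $\max_\theta|\partial_\theta^k g|$ are bounded uniformly over the compact range of $(\rho,\phi,\psi)$, the bound is uniform in those parameters. Integrating it over $(\rho,\phi,\psi)$ and multiplying by $\epsilon$ gives $O(\epsilon)\to0$ again. Adding the two pieces proves the limit is $0$ for $\epsilon\to0^+$; the case $\epsilon\to0^-$ is identical using the second distribution of Lemma~\ref{distributionlemma}, and together these give the two-sided limit.

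The one genuinely delicate point is the behaviour near the null cone $\theta=\pi/4$. A naive absolute-value estimate there fails: since $\cos(2\theta)\approx-2(\theta-\pi/4)$, one gets $\int_{|\theta-\pi/4|<\theta_0}|\cos(2\theta)+i\epsilon|^{-(p+q+3)/2}\,d\theta=O\bigl(\epsilon^{-(p+q+1)/2}\bigr)$, so multiplying by $\epsilon$ only yields $O\bigl(\epsilon^{-(p+q-1)/2}\bigr)$, which diverges for $p+q\ge2$. It is therefore essential to exploit the cancellation encoded in the distributional limit of Lemma~\ref{distributionlemma} rather than a modulus bound. Making the previous paragraph fully rigorous requires care with (i) the uniformity of the Lemma~\ref{distributionlemma} bound over the remaining coordinates and over the shape of the $\theta$-slices of $U\setminus B_r$ near the cone (this is where the smoothness of $\partial U$, resp.\ its transversality to the cone, enters), and (ii) the regularity of $f$, since the integration-by-parts argument behind Lemma~\ref{distributionlemma} differentiates the test function---hence $f\circ\Phi$---several times. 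Everything else is routine bookkeeping with the Jacobian and the powers of $\rho$.
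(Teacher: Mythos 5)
Your proposal is correct and rests on the same two pillars as the paper's proof: change to the hybrid spherical coordinates of \eqref{sphercoors}, where the denominator becomes $\rho^{p+q+3}\bigl(\cos(2\theta)+i\epsilon\bigr)^{(p+q+3)/2}$, and then invoke Lemma~\ref{distributionlemma} near the cone $\theta=\pi/4$ together with a trivial $O(\epsilon)$ bound away from it. The one structural difference is the order of integration. The paper first integrates out $\rho,\phi,\psi$ entirely, obtaining a single function $g(\theta)$ to which Lemma~\ref{distributionlemma} is applied once; the transversality hypothesis enters only in asserting that $g$ is smooth near $\pi/4$. You instead fix $(\rho,\phi,\psi)$, integrate in $\theta$ first, and then need the Lemma~\ref{distributionlemma} bound to hold \emph{uniformly} in the remaining parameters and over the variable shape of the $\theta$-slices of $U\setminus B_r$. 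You correctly flag this uniformity as the delicate point and observe that it follows from equicontinuity of the family of functionals on a compact parameter set; the paper's ordering avoids having to make that uniformity explicit. Your observation that a naive absolute-value estimate near $\theta=\pi/4$ gives a divergent bound of order $\epsilon^{-(p+q-1)/2}$, so that the cancellation in the distributional limit is genuinely required, is a useful point the paper does not spell out.
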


\begin{proof}
We write the integral in the hybrid spherical coordinates \eqref{sphercoors}
and integrate out the variables $r,\phi_1,\dots,\phi_n,\psi_p,\dots,\psi_{q-1}$.
After we do this, we retain an integral of the form
\begin{equation}  \label{similarintegral}
\epsilon \int _0^{\frac{\pi}2} \frac{g(\theta) d\theta}
{\bigl(\cos(2\theta)+i\epsilon\bigr)^{\frac{p+q+3}{2}}},
\end{equation}
for some function $g(\theta)$.
Due to the transversality of the boundary $\partial U$ with respect to the
null cone ${\cal N}_{p,q} = \{ \theta= \frac{\pi}4 \}$,
the function $g(\theta)$ is smooth at least for
$\theta$ lying in some interval $[\frac{\pi}4-\theta_0,\frac{\pi}4+\theta_0]$
with $\theta_0\in (0,\frac{\pi}4)$.
We can apply Lemma \ref{distributionlemma} to
$\int_{\frac{\pi}{4}-\theta_0}^{\frac{\pi}{4}+\theta_0} \dots$,
and the limit of the integral on the remainder of the interval exists,
so the limit of \eqref{similarintegral} as $\epsilon \to 0$ is zero.
\end{proof}

We introduce a constant $C_{p,q}$
(it will be evaluated later in Lemma \ref{C-lemma}):
\begin{equation}  \label{C_{p,q}def}
C_{p,q}=\lim_{\epsilon\to 0^+} \int_0^{\frac{\pi}2}
\frac{\cos^p\vartheta \sin^{q-1}\vartheta}
{\bigl(\cos (2\vartheta)+i\epsilon\bigr)^{\frac{p+q+1}2}} d\vartheta.
\end{equation}
Note that this limit exists by Lemma \ref{distributionlemma}.

\begin{lemma}  \label{firstintegralequation}
\begin{equation*}
\lim_{r\to 0^+} \biggl( \lim_{\epsilon \to 0^+} \int_{S_r}
\frac{r f(X) dS_{p,q}}{\bigl(N(X)+i\epsilon r^2\bigr)^{\frac{p+q+1}{2}}} \biggr)
= i^q\omega_p\omega_{q-1}C_{p,q}f(0).
\end{equation*}
\end{lemma}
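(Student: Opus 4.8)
The plan is to evaluate the $S_r$-integral directly in the hybrid spherical coordinates \eqref{sphercoors}, using Lemma \ref{sphericaljacobian} to unwind the surface measure, and then to recognize the remaining $\theta$-integral as the regularized integral in \eqref{C_{p,q}def} defining $C_{p,q}$. First I would work on $S_r = \{ X \in \R^{p+q+1} ;\: \norm{X} = r \}$, which in the coordinates \eqref{sphercoors} is the slice $\rho = r$. Since $\partial_\rho\Phi = X/\rho$, along $S_r$ the radial vector field $\partial_\rho$ equals the outward unit normal ${\bf n}_X = X/r$ entering the definition of $dS_{p,q}$; hence $dS_{p,q}\eval_{S_r}$ is the contraction of $\partial_\rho$ with $dV_{p,q}$, and combining \eqref{factorsofi}, Lemma \ref{sphericaljacobian}, and $\det(S_{n,\alpha}) = \rho^n\sin^{n-1}\alpha_1\cdots\sin\alpha_{n-1}$ one gets, for orientations chosen compatibly with $S_r = \partial B_r$,
\begin{equation*}
dS_{p,q}\eval_{S_r} = i^q\, r^{p+q}\, \cos^p\theta\, \sin^{q-1}\theta \; d\theta \wedge d\Omega_p \wedge d\Omega_{q-1},
\end{equation*}
where $d\Omega_p$, $d\Omega_{q-1}$ are the standard volume forms on the unit spheres $S^p\subset\R^{p+1}$, $S^{q-1}\subset\R^q$ (so $\int_{S^p}d\Omega_p = \omega_p$, $\int_{S^{q-1}}d\Omega_{q-1} = \omega_{q-1}$). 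On $S_r$ one also has $N(X) = \rho^2\cos(2\theta) = r^2\cos(2\theta)$ and $\norm{X}^2 = r^2$, and, $r^2$ being a positive real not meeting the branch cut of the square root, $\bigl( N(X) + i\epsilon r^2 \bigr)^{\frac{p+q+1}2} = r^{p+q+1}\bigl( \cos(2\theta)+i\epsilon \bigr)^{\frac{p+q+1}2}$. Thus the explicit $r$ in the numerator together with $r^{p+q}$ from $dS_{p,q}$ cancels $r^{p+q+1}$ in the denominator exactly, and integrating over the angular variables the $S_r$-integral becomes
\begin{equation*}
\int_{S_r}\frac{r f(X)\, dS_{p,q}}{\bigl( N(X)+i\epsilon r^2 \bigr)^{\frac{p+q+1}2}} = i^q \int_0^{\pi/2} \frac{\cos^p\theta\, \sin^{q-1}\theta\; g_r(\theta)\, d\theta}{\bigl( \cos(2\theta)+i\epsilon \bigr)^{\frac{p+q+1}2}},
\end{equation*}
where $g_r(\theta) = \int_{S^p\times S^{q-1}} f\bigl( X(r,\theta,\phi,\psi) \bigr)\, d\Omega_p\, d\Omega_{q-1}$ is the angular average of $f$ over the level $\theta$ in $S_r$.

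Next I would take $\epsilon \to 0^+$ with $r$ fixed. Fix $\theta_0 \in (0,\tfrac{\pi}{4})$ and split $[0,\tfrac{\pi}{2}]$ into the middle interval $[\tfrac{\pi}{4}-\theta_0,\tfrac{\pi}{4}+\theta_0]$ and its complement. On the complement $\cos(2\theta)$ is bounded away from $0$, so the integrand is continuous in $\epsilon$ up to $\epsilon = 0$ and that part converges. On the middle interval the function $\theta\mapsto\cos^p\theta\,\sin^{q-1}\theta\, g_r(\theta)$ is a smooth test function (its regularity inherited from that of $f$), so Lemma \ref{distributionlemma} applies and the limit exists and depends continuously, in the Fr\'echet topology, on this test function. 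Consequently $\lim_{\epsilon\to 0^+}$ of the $S_r$-integral exists and equals $i^q$ times a fixed continuous linear functional $L$ evaluated at $\theta\mapsto\cos^p\theta\,\sin^{q-1}\theta\, g_r(\theta)$.

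Finally I would let $r\to 0^+$. Since $\norm{X} = r$ on $S_r$, the point $X(r,\theta,\phi,\psi)$ tends to $0$ uniformly, so $g_r(\theta)\to\int_{S^p\times S^{q-1}}f(0)\, d\Omega_p\, d\Omega_{q-1} = \omega_p\omega_{q-1}f(0)$ uniformly in $\theta$; moreover every angular partial derivative of $X(r,\theta,\phi,\psi)$ on $S_r$ is $O(r)$, so every $\theta$-derivative of $g_r$ is $O(r)\to 0$. Hence $g_r \to \omega_p\omega_{q-1}f(0)$, a constant function, in the Fr\'echet topology, and applying the continuity of $L$ together with the definition \eqref{C_{p,q}def} of $C_{p,q}$,
\begin{multline*}
\lim_{r\to 0^+}\lim_{\epsilon\to 0^+} \int_{S_r}\frac{r f(X)\, dS_{p,q}}{\bigl( N(X)+i\epsilon r^2 \bigr)^{\frac{p+q+1}2}} \\
= i^q\, \omega_p\, \omega_{q-1}\, f(0) \cdot \lim_{\epsilon\to 0^+}\int_0^{\pi/2}\frac{\cos^p\theta\, \sin^{q-1}\theta\, d\theta}{\bigl( \cos(2\theta)+i\epsilon \bigr)^{\frac{p+q+1}2}} \\
= i^q\, \omega_p\, \omega_{q-1}\, C_{p,q}\, f(0),
\end{multline*}
which is the claim. (When $p = 0$ or $q = 1$ the corresponding angular factors drop out, with $\omega_0 = 2$; when $q = 0$ the statement is already contained in Theorem \ref{standardintegralformula}.)

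The step I expect to require the most care is the justification of this double limit — the interchange of $\lim_{\epsilon\to 0^+}$ with the angular integration and then of $\lim_{r\to 0^+}$ with the functional $L$. This is exactly the role played by Lemma \ref{distributionlemma}, whose continuity in the Fr\'echet topology is what lets both limits pass through; it does, however, require knowing that $\theta\mapsto\cos^p\theta\,\sin^{q-1}\theta\, g_r(\theta)$ is smooth near the null cone $\{\theta = \tfrac{\pi}{4}\}$ (so that it lies in the domain of $L$) and that $g_r$ together with all its $\theta$-derivatives converges to the appropriate constant as $r\to 0$ — the former following from the transversality hypothesis and the regularity of $f$, the latter from the bound $\partial_\theta^k X(r,\theta,\phi,\psi) = O(r)$ for $k\ge 1$ on $S_r$. (If only ${\mathcal C}^1$ regularity of $f$ is available, the integrations by parts underlying Lemma \ref{distributionlemma} should be carried out only as far as the regularity of $f$ permits, which already closes the argument when $p+q = 2$.)
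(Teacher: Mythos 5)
Your argument is correct and rests on the same ingredients as the paper's: the hybrid spherical coordinates \eqref{sphercoors}, the Jacobian from Lemma \ref{sphericaljacobian}, the exact cancellation of the $r$-powers, and Lemma \ref{distributionlemma} (applied with $p'+q'=p+q-2$, which is where $p+q\ge 2$ enters). The one genuine difference is how you organize the double limit. The paper integrates by parts once in $\theta$ using the antiderivative $F_{\epsilon}(\theta)$, so that the boundary term $F_{\epsilon}(\pi/2)\cdot(\text{angular integral of }f)$ visibly carries the answer while the interior term, containing $\partial_\theta f=O(r)$, is estimated away as $r\to 0^+$. You instead integrate out the angular variables first to form the angular average $g_r(\theta)$ and then pass both limits through a single continuous linear functional $L$ (Lemma \ref{distributionlemma} on the middle window plus classical integration outside it), using the fact that $\cos^p\theta\,\sin^{q-1}\theta\,g_r(\theta)\to\omega_p\omega_{q-1}f(0)\cos^p\theta\,\sin^{q-1}\theta$ in the Fr\'echet topology because $\partial_\theta^k X=O(r)$ on $S_r$ for $k\geq 1$. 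These are the same $O(r)$ facts, but your packaging makes the role of the continuity in Lemma \ref{distributionlemma} explicit and avoids the slightly informal step in the paper where the interior term is declared ``bounded by a constant multiple of $r$ (by Lemma \ref{distributionlemma})'' without spelling out exactly how the lemma bounds $\int F_\epsilon\,\partial_\theta f\,d\theta$; so your version is slightly cleaner on that point. Your closing remark about the regularity needed for the higher $\theta$-derivatives of $g_r$ is apt and applies equally to the paper's version, since the seminorm bound in Lemma \ref{distributionlemma} also demands as many derivatives as the repeated integration by parts consumes.
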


\begin{proof}
By Lemma \ref{sphericaljacobian}, we can split
\begin{equation*}
dS_{p,q}= i^q r^{p+q}\cos^p\theta \sin^{q-1}\theta
d\theta d\Omega_{p,\phi}d\Omega_{q-1,\psi},
\end{equation*}
where
\begin{equation*}
d\Omega_{n,\alpha}= \rho^{-n} \det(S_{n,\alpha}) d\alpha_1 d\alpha_2\dots d\alpha_n
\end{equation*}
represents the spherical volume element on the $n$-dimensional sphere.
The factor of $i^q$ comes from the
normalization of the Euclidean volume element on this space, \eqref{factorsofi}.
Note that the factors of $r$ in the numerator and denominator of
\begin{equation*}
\frac{r f(X) dS_{p,q}}{\bigl(N(X)+i\epsilon r^2\bigr)^{\frac{p+q+1}2}}
\end{equation*}
cancel.
Let $F_{\epsilon}(\theta)$ be the following antiderivative:
\begin{equation*}
F_{\epsilon}(\theta)=\int_0^{\theta} \frac{\cos^p\vartheta \sin^{q-1}\vartheta}
{\bigl(\cos(2\vartheta)+i\epsilon\bigr)^{\frac{p+q+1}2}} d\vartheta.
\end{equation*}
Integrating by parts yields
\begin{multline*}
\int_{S_r} \frac{r^{-p-q} f(X) dS_{p,q}}
{\bigl(\cos(2\theta)+i\epsilon\bigr)^{\frac{p+q+1}2}}
= i^q \int_{S_{q-1}}\int_{S_p}\int_{\theta=0}^{\theta=\pi/2}
\frac{f(X) \cos^p\theta \sin^{q-1}\theta d\theta d\Omega_{p,\phi}d\Omega_{q-1,\psi}}
{\bigl(\cos(2\theta)+i\epsilon\bigr)^{\frac{p+q+1}2}}  \\
=i^q\int_{S_{q-1}}\int_{S_p}\eval{f F_{\epsilon}(\theta)}_{\theta=0}^{\theta=\pi/2}
d\Omega_{p,\phi}d\Omega_{q-1,\psi}
-i^q\int_{S_{q-1}}\int_{S_p}F_{\epsilon}(\theta)\pdv{f}{\theta}
d\theta d\Omega_{\phi,p}d\Omega_{\psi,q-1}.
\end{multline*}
By the chain rule, we have
$\pdv{f}{\theta}=\sum_i \pdv{f}{x_i}\pdv{x_i}{\theta}=r\cdot g(X)$,
where $g(X)$ is a smooth function.
Thus, the second term is bounded by a constant multiple of $r$
(by Lemma \ref{distributionlemma}), and so in the limit of $r\to 0^+$
it vanishes.
In the first term, as $r \to 0^+$, $f$ approaches $f(0)$,
and so we can carry it out, and integrate $d\Omega_{p,\phi}$ and
$d\Omega_{q-1,\psi}$ to be the respective volumes of unit spheres to obtain
\begin{multline*}
\lim_{r\to 0^+} \biggl( \lim_{\epsilon \to 0^+}
i^q\int_{S_{q-1}}\int_{S_p}\eval{f F_{\epsilon}(\theta)}_{\theta=0}^{\theta=\pi/2}
d\Omega_{p,\phi}d\Omega_{q-1,\psi} \biggr)  \\
= f(0)\omega_{p}\omega_{q-1}\lim_{\epsilon\to 0^+}
\eval{F_{\epsilon}(\theta)}_{\theta=0}^{\theta=\pi/2}
=i^q\omega_p\omega_{q-1} C_{p,q}f(0).
\end{multline*}
\end{proof}

The following integrals appear as certain cross terms.

\begin{lemma}  \label{secondintegralequation}
For each $0 \leq j \leq p$ and $1 \leq k \leq q$,
\begin{equation*}
\lim_{r\to 0^+} \biggl( \lim_{\epsilon\to 0^+} \int_{S_r}
\frac{x_j \tilde{x}_{p+k} f(X)}
{\bigl(N(X)+i\epsilon r^2\bigr)^{\frac{p+q+1}2}}\frac{dS_{p,q}}r \biggr) =0.
\end{equation*}
\end{lemma}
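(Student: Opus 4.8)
The plan is to mimic the proof of Lemma~\ref{firstintegralequation} almost verbatim, exploiting the extra factors of $x_j$ and $\tilde{x}_{p+k}$ to produce a net vanishing in the limit. First I would pass to the hybrid spherical coordinates \eqref{sphercoors} and use Lemma~\ref{sphericaljacobian} to write
\begin{equation*}
\int_{S_r} \frac{x_j \tilde{x}_{p+k} f(X)}
{\bigl(N(X)+i\epsilon r^2\bigr)^{\frac{p+q+1}2}}\frac{dS_{p,q}}r
= i^q \int_{S_{q-1}} \int_{S_p} \int_{\theta=0}^{\theta=\pi/2}
\frac{x_j\tilde{x}_{p+k} f \cos^p\theta \sin^{q-1}\theta\,
d\theta\, d\Omega_{p,\phi}\, d\Omega_{q-1,\psi}}
{\bigl(\cos(2\theta)+i\epsilon\bigr)^{\frac{p+q+1}2}}.
\end{equation*}
The key observation is that from \eqref{sphercoors} we have $x_j = r\cos\theta\cdot u_j(\phi)$ and $\tilde{x}_{p+k} = r\sin\theta\cdot w_k(\psi)$ for smooth functions $u_j,w_k$ on the respective spheres, so $x_j\tilde{x}_{p+k} = r^2\cos\theta\sin\theta\, u_j(\phi) w_k(\psi)$. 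The two extra factors of $r$ cancel against the $1/r$ already present and against the $r^{p+q}$ hidden in $dS_{p,q}$ versus the $dS_{p,q}/r$ normalization, leaving the integrand bounded as $r \to 0^+$; meanwhile the extra $\cos\theta\sin\theta$ improves the behaviour at $\theta = \pi/4$.

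Next I would separate the $\theta$-integral from the angular integrals. Writing $\tilde{F}_\epsilon(\theta) = \int_0^\theta \cos^{p+1}\vartheta\sin^q\vartheta\bigl(\cos(2\vartheta)+i\epsilon\bigr)^{-\frac{p+q+1}2}\,d\vartheta$, which converges as $\epsilon\to 0^+$ by Lemma~\ref{distributionlemma} (the integrand here has an \emph{extra} vanishing factor $\cos\vartheta\sin\vartheta$ at the singular point $\vartheta=\pi/4$ compared with \eqref{C_{p,q}def}, so no issue arises), I integrate by parts in $\theta$ exactly as in Lemma~\ref{firstintegralequation}. The boundary term at $\theta = 0$ and $\theta = \pi/2$ produces $\tilde{F}_\epsilon(\pi/2)$ times an angular integral of $u_j(\phi)w_k(\psi) f(X)$ over $S_p \times S_{q-1}$; the remaining term carries a factor $\partial f/\partial\theta = r\cdot g(X)$ by the chain rule, hence is $O(r)$ and vanishes as $r \to 0^+$.

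The crux of the argument — and the one genuinely new input over Lemma~\ref{firstintegralequation} — is that the surviving boundary term also vanishes in the limit $r\to 0^+$. As $r\to 0^+$ we have $f(X)\to f(0)$, so the angular integral degenerates to $f(0)\bigl(\int_{S_p} u_j(\phi)\,d\Omega_{p,\phi}\bigr)\bigl(\int_{S_{q-1}} w_k(\psi)\,d\Omega_{q-1,\psi}\bigr)$. But $u_j(\phi)$ is (up to sign conventions) the $j$-th Cartesian coordinate function $x_j/\abs{x}$ restricted to the unit sphere $S_p$, whose integral over the sphere is zero by the odd symmetry $x_j \mapsto -x_j$; likewise $\int_{S_{q-1}} w_k\,d\Omega_{q-1,\psi} = 0$. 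Hence each term in the limit is $0$. I expect the main obstacle to be bookkeeping: making the coordinate identifications $x_j = r\cos\theta\, u_j(\phi)$, $\tilde{x}_{p+k} = r\sin\theta\, w_k(\psi)$ fully precise across the parametrization \eqref{sphercoors} (including the cases $j=p$ and $k=q$ where the angles are $2\pi$-periodic), and verifying carefully that the $\epsilon\to 0^+$ limit may be taken \emph{before} the $r\to 0^+$ limit — which is exactly what Lemma~\ref{distributionlemma} is designed to license. Once the vanishing of the spherical averages $\int_{S_p} u_j\,d\Omega = \int_{S_{q-1}} w_k\,d\Omega = 0$ is noted, the rest is a routine adaptation of the preceding lemma.
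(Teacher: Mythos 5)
Your proof is correct but takes a genuinely different route from the paper's. The paper integrates by parts in one of the angular variables $\psi_k$ of the $(q-1)$-sphere: after observing that the integrand's $\psi_k$-dependence has the explicit form $\cos\psi_k\sin^{q-1-k}\psi_k\cdot h_k$ with $h_k$ independent of $\psi_k$ (for $1 \leq k \leq q-2$), the boundary term vanishes because $\sin^{q-k}$ vanishes at $\psi_k=0,\pi$ (or by periodicity when $k = q-1, q$), and the remaining term is $O(r)$ via the chain rule. You instead integrate by parts in $\theta$, mimicking Lemma~\ref{firstintegralequation} directly, and kill the boundary term by observing that the spherical averages $\int_{S_p} u_j\,d\Omega_{p,\phi}$ and $\int_{S_{q-1}} w_k\,d\Omega_{q-1,\psi}$ of coordinate functions on the unit spheres vanish by odd symmetry. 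Both strategies are valid. Yours has the virtue of a closer structural analogy to Lemma~\ref{firstintegralequation} and isolates a transparent symmetry reason for the vanishing; the paper's stays inside the explicit $\psi_k$-bookkeeping and avoids appealing to the coordinate-average identity.

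Two smaller remarks. First, you claim that the extra factor $\cos\vartheta\sin\vartheta$ in $\tilde F_\epsilon$ provides ``extra vanishing at the singular point $\vartheta = \pi/4$,'' but $\cos(\pi/4)\sin(\pi/4) = \tfrac12 \neq 0$, so there is no extra vanishing there. The convergence of $\tilde F_\epsilon(\pi/2)$ as $\epsilon \to 0^+$ is still guaranteed by Lemma~\ref{distributionlemma} applied directly (the numerator $\cos^{p+1}\vartheta\sin^q\vartheta$ is merely a smooth test function; vanishing at $\pi/4$ is not needed). Second, you do not actually need the $r\to 0^+$ limit to kill the boundary term: at $\theta = \pi/2$ every $x_j$ ($0 \leq j \leq p$) is zero, so $f$ restricted to that boundary slice is independent of $\phi$; the boundary term therefore factors as $\tilde F_\epsilon(\pi/2)\bigl(\int_{S_p} u_j\,d\Omega_{p,\phi}\bigr)\bigl(\int_{S_{q-1}} w_k\, f\,d\Omega_{q-1,\psi}\bigr)$, which equals zero exactly for every $r, \epsilon > 0$, not merely in the limit.
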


\begin{proof}
Note that, as in the proof of previous lemma, once we convert to the hybrid
spherical coordinates \eqref{sphercoors}, the factors of $r$ cancel.
After conversion to these coordinates, the integrand depends on $\psi_k$
as follows.
For $k=1,\dots,q-1$, the integrand is of the form $\cos\psi_k \cdot h_k$,
where $h_k$ is a function does not depend on $\psi_k$.
And for $k=q$, it is of the form $\sin\psi_{q-1} \cdot h_q$.
Additionally, we see from Lemma \ref{sphericaljacobian} that the Jacobian,
as a function of $\psi_k$, is proportional to $\sin^{q-1-k} \psi_k$
for $k=1,\dots,q-1$.
By absorbing the remaining factors of
$x_j(\cos(2\theta)+i\epsilon)^{-\frac{p+q+1}2} dS_{p,q}$ into the $h_k$,
so we still have $\pdv{h_k}{\psi_k}=0$, for all $\epsilon$,
we can rewrite the integral as
\begin{equation*}
\begin{cases}
\int_{S_r} \cos\psi_k \sin^{q-1-k} \psi_kh_kf(X)
d\theta d\phi_1d\phi_2\dots d\phi_p d\psi_1 d\psi_2\dots d\psi_{q-1}
& \text{if $k=1,\dots,q-2$}, \\
\int_{S_r} \cos\psi_{q-1} h_{q-1}f(X)
d\theta d\phi_1d\phi_2\dots d\phi_p d\psi_1 d\psi_2\dots d\psi_{q-1}
& \text{if $k=q-1$}, \\
\int_{S_r} \sin\psi_{q-1}h_{q}f(X)
d\theta d\phi_1d\phi_2\dots d\phi_p d\psi_1 d\psi_2\dots d\psi_{q-1}
& \text{if $k=q$}.
\end{cases}
\end{equation*}
For $1\leq k\leq q-2$, when we integrate with respect to $\psi_k$, we have
\begin{multline*}
\int_{\psi_k=0}^{\psi_k=\pi}  \cos\psi_k \sin^{q-1-k}\psi_k h_kf(X) d\psi_k  \\
=\eval{\frac{\sin^{q-k}\psi_k}{q-k} h_kf(X)}_{\psi_k=0}^{\psi_k=\pi}
-\int_0^{\pi}\frac{\sin^{q-k}\psi_k}{q-k} h_k\pdv{f}{\psi_k} d\psi_k
= -\int_0^{\pi}\frac{\sin^{q-k}\psi_k}{q-k} h_k\pdv{f}{\psi_k} d\psi_k.
\end{multline*}
The partial derivative $\pdv{f}{\psi_k}$ yields a factor of $r$ by the chain
rule, as demonstrated in the proof of Lemma \ref{firstintegralequation}.
Then we integrate out the remaining variables and let $\epsilon \to 0^+$,
the limit exists by Lemma \ref{distributionlemma}.
As $r \to 0^+$, the factor of $r$ ensures the limit is zero.

For $k=q-1$ and $k=q$, the bounds of integration are
$0 \leq \psi_{q-1} \leq 2\pi$,
and the same argument works in this case as well. 
\end{proof}

Now  we can prove an analogue of Lemma 20 of \cite{L}.

\begin{lemma}  \label{lastlemma}
Let $C_{p,q}$ be as in \eqref{C_{p,q}def}, then
\begin{equation*}
\lim_{r\to 0^+} \biggl( \lim_{\epsilon\to 0^+} \int_{S_r}
\frac{X^+ D_{p,q}x f(X)}{\bigl(N(X)+i\epsilon r^2\bigr)^{\frac{p+q+1}2}} \biggr)
=i^q\omega_{p}\omega_{q-1}C_{p,q}f(0).
\end{equation*}
\end{lemma}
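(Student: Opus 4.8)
The plan is to expand $X^+ D_{p,q}x$ on the sphere $S_r$ using Corollary \ref{dSlemma}, which gives $D_{p,q}x\eval_{S_r} = \frac{\bar X}{r}\,dS_{p,q}$, so the integrand becomes $\frac{r^{-1}X^+\bar X\, f(X)\,dS_{p,q}}{(N(X)+i\epsilon r^2)^{(p+q+1)/2}}$. First I would write $X^+\bar X$ in the basis $\{e_0,\dots,e_p,\tilde e_{p+1},\dots,\tilde e_{p+q}\}$ and separate it into a scalar part and off-diagonal parts. Using the relations \eqref{basicrelations1}--\eqref{basicrelations2}, the scalar part of $X^+\bar X$ is exactly $\sum_{j=0}^p x_j^2 + \sum_{j=p+1}^{p+q}\tilde x_j^2 = \norm{X}^2 = r^2$ on $S_r$ (the sign flips from Clifford conjugation and from complex conjugation conspire to make every diagonal term positive). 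The remaining terms of $X^+\bar X$ are all of the cross type $x_j x_k e_j e_k$ (with $0\le j<k\le p$), $x_j \tilde x_k e_j\tilde e_k$ (mixed), or $\tilde x_j\tilde x_k\tilde e_j\tilde e_k$ ($p+1\le j<k$), each carrying a Clifford basis element $e_je_k$, $e_j\tilde e_k$, or $\tilde e_j\tilde e_k$ that commutes out of the integral.

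Next I would handle the three families of terms separately. The scalar part contributes $\int_{S_r}\frac{r^{-1}\cdot r^2\, f(X)\,dS_{p,q}}{(N(X)+i\epsilon r^2)^{(p+q+1)/2}} = \int_{S_r}\frac{r f(X)\,dS_{p,q}}{(N(X)+i\epsilon r^2)^{(p+q+1)/2}}$, whose double limit is exactly $i^q\omega_p\omega_{q-1}C_{p,q}f(0)$ by Lemma \ref{firstintegralequation} — this is the desired right-hand side. So it remains to show every cross term vanishes in the double limit. The mixed cross terms $x_j\tilde x_k$ are killed directly by Lemma \ref{secondintegralequation}. For the same-block cross terms $x_jx_k$ ($j\ne k$) and $\tilde x_j\tilde x_k$ ($j\ne k$), I would run the same argument as in Lemma \ref{secondintegralequation}: convert to the hybrid spherical coordinates \eqref{sphercoors}, observe that the integrand is an odd trigonometric polynomial in one of the angular variables $\phi_m$ (resp.\ $\psi_m$) with the complementary factors independent of that variable, integrate by parts in that angle to extract either a vanishing boundary term or a factor of $\partial f/\partial\phi_m = r\cdot(\text{smooth})$ (by the chain rule, as in Lemma \ref{firstintegralequation}), and then use Lemma \ref{distributionlemma} to pass $\epsilon\to 0^+$ and the residual factor of $r$ to pass $r\to 0^+$ to zero. (One checks that $x_jx_k/r^2 = \cos^2\theta\cdot(\text{product of sines and a cosine in the }\phi\text{'s})$ is indeed odd in whichever $\phi_m$ separates the indices $j$ and $k$.)

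The main obstacle is the bookkeeping in the cross-term analysis: one must verify, case by case on the positions of $j$ and $k$ relative to the "pivot" angular variable, that the angular factor is genuinely odd (or a pure sine) in a variable on which the rest of the integrand — including $dS_{p,q}$ and the denominator $(\cos(2\theta)+i\epsilon)^{-(p+q+1)/2}$ — does not depend, so that the integration-by-parts step actually produces the factor of $r$. The denominator depends only on $\theta$, which helps, but the Jacobian from Lemma \ref{sphericaljacobian} mixes powers of the $\phi$'s and $\psi$'s, so the "absorb everything into $h$ with $\partial h/\partial(\text{pivot})=0$" trick from Lemma \ref{secondintegralequation} must be applied carefully. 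Everything else is a routine combination of Corollary \ref{dSlemma}, Lemma \ref{firstintegralequation}, Lemma \ref{secondintegralequation}, and Lemma \ref{distributionlemma}.
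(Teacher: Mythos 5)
Your overall plan matches the paper's proof: invoke Corollary \ref{dSlemma} to reduce to $\int_{S_r} r^{-1}X^+\bar X f\,dS_{p,q}/(N(X)+i\epsilon r^2)^{(p+q+1)/2}$, split $X^+\bar X$ into a scalar piece (handled by Lemma \ref{firstintegralequation}) and remaining pieces, and kill the remainder using Lemma \ref{secondintegralequation}. However, you make an algebraic error in the expansion of $X^+\bar X$ that causes you to misidentify the ``main obstacle.'' Writing $X = x_0e_0 + B + C$ with $B = \sum_{j=1}^p x_je_j$ and $C = \sum_{k=p+1}^{p+q}\tilde x_k\tilde e_k$, one has $X^+ = x_0e_0 - B - C$ and $\bar X = x_0e_0 + B - C$, so
\begin{equation*}
X^+\bar X = x_0^2 - B^2 + C^2 - 2x_0 C + (BC - CB).
\end{equation*}
The anticommutation relations force all same-block cross terms in $B^2$ and $C^2$ to cancel: $B^2 = -\sum_{j=1}^p x_j^2$ and $C^2 = \sum_{k=p+1}^{p+q}\tilde x_k^2$, with no off-diagonal contributions. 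Hence
\begin{equation*}
X^+\bar X = \norm{X}^2 - 2\sum_{j=0}^p\sum_{k=p+1}^{p+q} x_j\tilde x_k\,\tilde e_k e_j,
\end{equation*}
which is exactly the identity the paper uses. There are no cross terms of type $x_jx_k\,e_je_k$ or $\tilde x_j\tilde x_k\,\tilde e_j\tilde e_k$ to worry about; the only non-scalar terms are the mixed ones already covered by Lemma \ref{secondintegralequation}. Your second paragraph's case analysis of same-block cross terms (and the parity bookkeeping you flag as the ``main obstacle'') is therefore addressing a nonexistent problem. The error is not fatal --- if you computed those terms you would find them identically zero --- but it means you missed the structural simplification of the Clifford product that makes the paper's proof short, and your estimate of the work required is significantly off.
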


\begin{proof}
Using Corollary \ref{dSlemma}, we have
\begin{equation}  \label{analogequation1}
\int_{S_r} \frac{X^+ D_{p,q}x f(X)}{\bigl(N(X)+i\epsilon r^2\bigr)^{\frac{p+q+1}2}}
= \int_{S_r} \frac{X^+\bar{X} f(X)}{\bigl(N(X)+i\epsilon r^2\bigr)^{\frac{p+q+1}2}}
\frac{dS_{p,q}}r.
\end{equation}
We calculate, if
$X=x_0e_0+\sum_{j=1}^p x_je_j+\sum_{k=p+1}^{p+q} \tilde{x}_k\tilde{e}_k$,
\begin{equation*}
X^{+}\bar{X}
=\norm{X}^2-2\sum_{j=0}^p \sum_{k=p+1}^{p+q} x_j\tilde{x}_k\tilde{e}_k e_j.
\end{equation*}
We split the integral of \eqref{analogequation1} into two parts
to be analyzed separately,
\begin{multline*}
\int_{S_r} \frac{X^+ D_{p,q}x f(X)}{\bigl(N(X)+i\epsilon r^2\bigr)^{\frac{p+q+1}2}}
=\int_{S_r} \frac{\norm{X}^2 f}{\bigl(N(X)+i\epsilon r^2\bigr)^{\frac{p+q+1}2}}
\frac{dS_{p,q}}r  \\
-2\sum_{j=0}^p \sum_{k=p+1}^{p+q} \int_{S_r}
\frac{x_j\tilde{x}_k \tilde{e}_ke_j f(X)}
{\bigl(N(X)+i\epsilon r^2\bigr)^{\frac{p+q+1}2}}\frac{dS_{p,q}}r.
\end{multline*}
Applying Lemmas \ref{firstintegralequation} and \ref{secondintegralequation}
yields the result.
\end{proof}

We have proved Theorem \ref{secondintegralformula} up to a constant coefficient.
Indeed, combining \eqref{twointegrals} with
Lemmas \ref{exteriorderivativetozero} and \ref{lastlemma}, we obtain
\begin{equation}  \label{2nd-formula-constant}
\lim_{\epsilon \to 0^+} \int_{\partial U} G_{p,q,\epsilon}(X-X_0) D_{p,q}x f(X)
= i^q\omega_{p}\omega_{q-1}C_{p,q}f(X_0) \qquad \text{if $X_0 \in U$}.
\end{equation}
And if $X_0 \notin \overline{U}$ the same argument shows that the limit of
integrals is zero.
Then Theorem \ref{secondintegralformula} follows from Lemma \ref{C-lemma} below.
\end{proof} 

\begin{lemma}  \label{C-lemma}
\begin{equation*}
C_{p,q}=(-i)^q\frac{\omega_{p+q}}{\omega_p\omega_{q-1}}.
\end{equation*}
\end{lemma}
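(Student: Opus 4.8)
The plan is to determine $C_{p,q}$ by comparing the two Cauchy integral formulas already established: the first one (Theorem \ref{firstintegralformula}), which is valid with the correct constant $\omega_{p+q}$, and the second one (equation \eqref{2nd-formula-constant}), which is valid only up to the constant $i^q\omega_p\omega_{q-1}C_{p,q}$. Both formulas reproduce $f(X_0)$ (up to their respective constants) from boundary data of a $(p,q)$-left-monogenic function $f$. So the strategy is: exhibit a single concrete situation — a specific domain $U$, a specific $X_0 \in U$, and a specific $(p,q)$-left-monogenic function $f$ that also admits a complex left-monogenic extension — in which both formulas apply, and simply read off the ratio of constants. Since the left-hand sides agree (they are the same integral of the same kernel, once we check that the modification and the contour deformation give the same limit), we get $i^q\omega_p\omega_{q-1}C_{p,q} = \omega_{p+q}$, hence $C_{p,q} = (-i)^q\omega_{p+q}/(\omega_p\omega_{q-1})$.

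Concretely, I would take $f \equiv e_0$, the constant function equal to the identity of ${\mathcal A}_{p,q}$; it is trivially $(p,q)$-left-monogenic and extends to the constant complex left-monogenic function on all of $\C^{p+q+1}$. For the domain I would take $U = \{X \in \R^{p+q+1} : \norm{X}^2 < 1\}$, the Euclidean unit ball, with $X_0 = 0$, which lies in $U$; its boundary $S_1$ meets the null cone $\{N(X)=0\}$ transversally, so Theorem \ref{secondintegralformula} (in the form \eqref{2nd-formula-constant}) applies and gives
\begin{equation*}
\lim_{\epsilon \to 0^+} \int_{S_1} G_{p,q,\epsilon}(X)\, D_{p,q}x = i^q\omega_p\omega_{q-1}C_{p,q}\, e_0.
\end{equation*}
On the other hand, since $f^\C \equiv e_0$ is complex left-monogenic on all of $\C^{p+q+1}$, Theorem \ref{firstintegralformula} applies with the same $U$ and $X_0 = 0$ and gives, for small $\epsilon>0$,
\begin{equation*}
\int_{(h_{\epsilon,0})_*(S_1)} G_{p+q}(Z)\, D_{p+q}z = \omega_{p+q}\, e_0.
\end{equation*}
The task is then to identify these two left-hand sides. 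The key computational link is Corollary \ref{nofxhcorollary}: for $X \in \R^{p+q+1}$ one has $N(h_\epsilon(X)) = (1-\epsilon^2)N(X) + 2i\epsilon\norm{X}^2$, which is exactly (up to the harmless rescaling $\epsilon \mapsto \epsilon/(1-\epsilon^2)$ and an overall positive constant, absorbed in the limit) the denominator $N(X) + i\epsilon'\norm{X}^2$ appearing in $G_{p,q,\epsilon'}$. Pulling back the integral $\int_{(h_{\epsilon,0})_*(S_1)} G_{p+q}(Z)\, D_{p+q}z$ along the linear map $h_\epsilon$, and using that $D_{p+q}z$ pulled back to $\R^{p+q+1}$ along $h_\epsilon$ agrees with $D_{p,q}x$ in the limit $\epsilon \to 0^+$ (the Jacobian factors $(1\pm i\epsilon)$ tend to $1$), converts the first integral into $\lim_{\epsilon\to 0^+}\int_{S_1} G_{p,q,\epsilon}(X) D_{p,q}x$ — which is the left-hand side of the second formula.

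The main obstacle, and the step deserving care, is making this pullback-and-limit identification rigorous: one must verify that $h_\epsilon^*(G_{p+q}(Z)\,D_{p+q}z)$ restricted to $S_1$ converges, as $\epsilon \to 0^+$, to $G_{p,q,\epsilon'}(X)\,D_{p,q}x$ in a way compatible with taking the $\epsilon \to 0$ limit of the (improper, oscillatory-in-the-limit) integrals on both sides — i.e.\ that the order of the limit and integration can be interchanged, which is precisely what Lemma \ref{distributionlemma} was designed to control near the null cone $\theta = \pi/4$. Once the equality of left-hand sides is established, comparing it with $\omega_{p+q}e_0$ on one side and $i^q\omega_p\omega_{q-1}C_{p,q}e_0$ on the other forces
\begin{equation*}
i^q\omega_p\omega_{q-1}C_{p,q} = \omega_{p+q},
\end{equation*}
and dividing through (noting $(i^q)^{-1} = (-i)^q$) gives $C_{p,q} = (-i)^q\,\omega_{p+q}/(\omega_p\omega_{q-1})$, as claimed.
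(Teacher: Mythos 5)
Your proposal is correct and follows essentially the same route as the paper: apply both Cauchy formulas to the constant function on the Euclidean unit sphere centered at $0$, pull the first formula's integral back along $h_\epsilon$ to $S_{p,q}$, and match it against \eqref{2nd-formula-constant} in the limit $\epsilon\to 0^+$. The ``pullback-and-limit'' step you flag as the main obstacle is exactly where the paper expends its effort, writing $(h_\epsilon)^*D_{p+q}z = D_{p,q}x + \epsilon\tilde{D}_{p,q,\epsilon}x$ and $(h_\epsilon)^*Z^+ = X^+ + i\epsilon\bar{X}^+$ so that the integral splits into the main term handled by \eqref{2nd-formula-constant} plus explicit $O(\epsilon)$ correction terms controlled by Lemma \ref{distributionlemma}.
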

\begin{proof}
We prove this by applying both integral formulas to the constant function
$f(X)=1$ on the sphere $S_{p,q}=\{X\in \R^{p+q+1}: \norm{X}=1\}$.
When we apply the first integral formula (Theorem \ref{firstintegralformula}),
we obtain, for all $\epsilon>0$ sufficiently close to zero,
\begin{equation*}
\int_{(h_{\epsilon})_*(S_{p,q})} \frac{Z^+ D_{p+q}z}{N(Z)^{\frac{p+q+1}2}}=\omega_{p+q}.
\end{equation*}
We calculate the pullback of $D_{p+q}z$ and rewrite this as an integral over
the sphere $S_{p,q}$.
Note that $h_{\epsilon}$ is a linear transformation and that
\begin{equation*}
(h_{\epsilon})^* dz_j
= \begin{cases} (1+i\epsilon)dz_j & \text{if $0 \leq j \leq p$},  \\
(1-i\epsilon)dz_j & \text{if $p+1 < \leq j \leq p+q$}. \end{cases}
\end{equation*}
Using the expansion of $D_{p+q}z$ in the standard basis \eqref{Dzsumdef},
we find:
\begin{multline*}
(h_{\epsilon})^* D_{p+q}z
= \sum_{j=0}^p (-1)^j e_j (1+i\epsilon)^p(1-i\epsilon)^q d\hat{z}_j
+ \sum_{j=p+1}^{p+q} (-1)^j e_j (1+i\epsilon)^{p+1}(1-i\epsilon)^{q-1} d\hat{z}_j\\
=D_{p+q}z + \epsilon \tilde{D}_{p+q,\epsilon}z,
\end{multline*}
where the form $\tilde{D}_{p+q,\epsilon}z$ depends on $\epsilon$ polynomially
and does not depend on $Z$.
Let $\tilde{D}_{p,q,\epsilon}x$ be the restriction of $\tilde{D}_{p+q,\epsilon}z$
to $\R^{p+q+1}$.
Recall the conjugation \eqref{compl-conj-Rn}.
If $X \in \R^{p+q+1}$ and $Z=h_{\epsilon}(X)$, then, using \eqref{N(h-epsilon)},
\begin{equation*}
(h_{\epsilon})^* Z^+=(X+i\epsilon \bar{X})^+ = X^++i\epsilon \bar{X}^+,
\end{equation*}
\begin{equation*}
(h_{\epsilon})^* N(Z)=(1-\epsilon^2)N(X)+i\epsilon\norm{X}^2
= (1-\epsilon^2)N(X)+i\epsilon.
\end{equation*}
We have:
\begin{multline*}  
\int_{(h_{\epsilon})_*(S_{p,q})} \frac{Z^+ D_{p+q}z}{N(Z)^{\frac{p+q+1}2}}
=\int_{S_{p,q}} \frac{(X^++i\epsilon \bar{X}^+)
(D_{p,q}x+\epsilon\tilde{D}_{p,q,\epsilon}x)}
{\bigl((1-\epsilon^2)N(X)+i\epsilon\bigr)^{\frac{p+q+1}2}}  \\
= \int_{S_{p,q}} \frac{X^+D_{p,q}x}
{\bigl((1-\epsilon^2)N(X)+i\epsilon\bigr)^{\frac{p+q+1}2}}
+ \int_{S_{p,q}} \frac{\epsilon \bar{X}^+ \tilde{D}_{p,q,\epsilon}x
+ i\epsilon \bar{X}^+(D_{p,q}x+\epsilon \tilde{D}_{p,q,\epsilon}x)}
{\bigl((1-\epsilon^2)N(X)+i\epsilon\bigr)^{\frac{p+q+1}2}}.
\end{multline*}
As $\epsilon \to 0^+$, the first integral approaches
$i^q\omega_p\omega_{q-1}C_{p,q}$ by the already established formula
\eqref{2nd-formula-constant}.
And the second integral approaches zero because we can factor out $\epsilon$
and proceed in the same manner as we proved Lemma \ref{exteriorderivativetozero}
using the hybrid coordinates \eqref{sphercoors} and
Lemma \ref{distributionlemma}.
\end{proof}

\end{document}